\newtheorem{thm}{Theorem}[section]
\newtheorem*{thm-non}{Theorem}
\newtheorem{lem}[thm]{Lemma}
\newtheorem{prop}[thm]{Proposition}
\newtheorem{cor}[thm]{Corollary}
\theoremstyle{definition}
\newtheorem{defi}[thm]{Definition}
\newtheorem{rem}[thm]{Remark}
\newtheorem{exam}[thm]{Example}
\DeclareMathOperator\PP{\mathbb{P}}
\DeclareMathOperator\OO{\mathcal{O}}
\DeclareMathOperator\De{D^b}
\DeclareMathOperator\Hom{Hom}
\DeclareMathOperator\Ext{Ext}
\DeclareMathOperator\dg{deg}
\DeclareMathOperator\Ev{ev}
\DeclareMathOperator\NS{NS}
\DeclareMathOperator\Co{Cone}
\DeclareMathOperator\IZ{\mathcal{I}_{\mathcal{Z}}}
\DeclareMathOperator\cok{coker}
\DeclareMathOperator\Pic{Pic}
\DeclareMathOperator\rk{rk}
\DeclareMathOperator\supp{supp}
\DeclareMathOperator{\Stab}{Stab}
\DeclareMathOperator{\SStab}{SStab}
\newcommand{\ZZ}{\ensuremath{\mathbb{Z}}} 
\newcommand{\CC}{\ensuremath{\mathbb{C}}} 
\newcommand{\X}{\ensuremath{X^{[k]}}}
\newcommand{\Xk}{\ensuremath{X^{[k]}}}
\begin{document}

\title[Stability of vector bundles on Hilbert schemes of points on K3 surfaces]{Stability of some vector bundles on Hilbert schemes of points on K3 surfaces}

\author{Fabian Reede}
\address{Institut f\"ur Algebraische Geometrie, Leibniz Universit\"at Hannover, Welfengarten 1, 30167 Hannover, Germany}
\email{reede@math.uni-hannover.de}

\author{Ziyu Zhang}
\address{Institute of Mathematical Sciences, ShanghaiTech University, 393 Middle Huaxia Road, 201210 Shanghai, P.R.China}
\email{zhangziyu@shanghaitech.edu.cn}

\keywords{stable sheaves, moduli spaces, universal families, Hilbert schemes}

\subjclass[2010]{Primary: 14F05; Secondary: 14D20, 14J60, 53C26}

\begin{abstract}
Let $X$ be a projective K3 surfaces. In two examples where there exists a fine moduli space $M$ of stable vector bundles on $X$, isomorphic to a Hilbert scheme of points, we prove that the universal family $\mathcal{E}$ on $X\times M$ can be understood as a complete flat family of stable vector bundles on $M$
parametrized by $X$, which identifies $X$ with a smooth connected component of some moduli space of stable sheaves on $M$.
\end{abstract}

\maketitle

\section*{Introduction}

Let $X$ be a projective K3 surface, and $M$ a moduli space of semistable sheaves on $X$. By Mukai's seminal work \cite{muk2}, when $M$ is smooth, it is an example of the so-called irreducible holomorphic symplectic manifolds, which are an important class of building blocks in the classification of compact K\"ahler manifolds with trivial first Chern class. It is then an interesting question to understand whether the moduli spaces $\mathcal{M}$ of semistable sheaves on $M$ inherit any good properties from $M$. This paper grew out of an attempt to study this question. When $\dim M > 2$, we  cannot expect $\mathcal{M}$ to carry a holomorphic symplectic structure in general, because the Serre duality does not induce a non-degenerate anti-symmetric pairing on the tangent space of $\mathcal{M}$ any more, as opposed to the case of K3 surfaces; however, some components of $\mathcal{M}$ may nevertheless be holomorphic symplectic. 

In order to study this question, we need to classify all semistable sheaves on $M$ with fixed Chern classes, which seems difficult in general when $\dim M > 2$; it is even a challenging question to construct any non-trivial examples of semistable sheaves on $M$, due to the fact that stability is difficult to check on higher dimensional varieties in general. When $M$ is a Hilbert scheme of points on the K3 surface $X$, a natural family of vector bundles on $M$ for considering stability are the so-called tautological bundles, which were proven to be stable with respect to a suitable choice of an ample line bundle on $M$ by Schlickewei \cite{Sch10}, Wandel \cite{wandel} and Stapleton \cite{stapleton}. In fact, Wandel proved that, under some mild assumptions, the connected component of the moduli space containing the tautological bundles is isomorphic to some moduli space of vector bundles on the underlying K3 surface $X$.

There is another way to construct examples of stable sheaves on $M$. Assuming that $M$ is a fine moduli space of stable sheaves on $X$ with a universal family $\mathcal{E}$ on $X \times M$, and denoting the ``wrong-way fiber"  $\mathcal{E}|_{ \{x\} \times M}$ by $E_x$ for each closed point $x \in X$, we can ask the following questions:
\begin{itemize}
	\item Is $\mathcal{E}$ also a flat family of coherent sheaves on $M$ parametrized by $X$?
	\item If so, are the ``wrong-way'' fibers $E_x$ stable sheaves on $M$ with respect to some suitable choice of an ample line bundle for every closed point $x \in X$?
	\item If so, can we identify $X$ with a connected component of the corresponding moduli space of stable sheaves on $M$?
\end{itemize}

This idea has also been explored in the literature. In \cite{rz}, the authors studied some families of ideal sheaves and torsion sheaves of pure dimension $1$, and obtained an affirmative answer to the above questions in these cases. A systematic study of the above questions in the case of locally free sheaves was carried out in the very interesting and inspiring thesis of Wray \cite{wray}. In order to get around the difficulty of proving stability directly, he invoked the very deep and powerful technique of Hitchin-Kobayashi correspondence to translate the stability problem to the existence of some Hermitian-Einstein metrics, which was then solved by analytic methods to give affirmative answers to the above questions.

The present paper is devoted to study the above questions, in particular the stability of wrong-way fibers $E_x$ with respect to a polarization near the boundary of the ample cone of $M$, in the very classical way by showing that every proper subsheaf of $E_x$ of a smaller rank has a smaller slope. We will focus on two special cases, namely a projective K3 surface $X$ along with a Mukai vector $v$ such that either
\begin{itemize}
	\item $\NS(X) = \mathbb{Z}h$ with $h^2 = 4k$ and $v=(k+1, -h, 1)$ for any $k \geqslant 1$; or
	\item $\NS(X) = \mathbb{Z}e \oplus \mathbb{Z}f$ with the intersection matrix given by $\begin{pmatrix}
		-2k & 2k+1 \\
		2k+1 & 0
	\end{pmatrix}$ for any $k \geqslant 2$ as well as $v=(2k-1, e+(2k-1)f, 2k)$. 
\end{itemize}

We summarize our main results in the following theorem:

\begin{thm}
	For any projective K3 surface $X$ satisfying either of the above conditions,
	\begin{enumerate}[(1)]
		\item we can explicitly construct a fine moduli space $M$ of stable vector bundles of Mukai vector $v$ on $X$, isomorphic to the Hilbert scheme of $k$ points on $X$, along with a universal family $\mathcal{E}$ (see Theorem \ref{thm:1stModuli} and Theorem \ref{thm:2ndModuli});
		\item there exists an ample divisor $H$ on $M$ such that $\mathcal{E}$ can be regarded as a flat family of $\mu_H$-stable vector bundles on $M$ parametrized by $X$ (see Theorem \ref{prop:sameH1} and Theorem \ref{prop:sameH2});
		\item the classifying morphism induced by the family $\mathcal{E}$ identifies $X$ with a smooth connected component of a moduli space of $\mu_H$-stable sheaves on $M$ (see Theorem \ref{thm:component1} and Theorem \ref{thm:component2}).
	\end{enumerate}
\end{thm}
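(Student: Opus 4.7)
The theorem packages three groups of statements, which will be established separately for each of the two cases; the overall plan is however uniform, and I outline it now.

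For part (1), the starting observation is the Mukai-pairing computation $\langle v,v\rangle = 2k-2$, so that any smooth fine moduli space $M$ of sheaves of class $v$ has the correct dimension $2k = \dim \Xk$. To realize the isomorphism $M\cong \Xk$ explicitly, my plan is to begin with the universal ideal sheaf $\IZ$ on $X\times \Xk$ and produce $\mathcal{E}$ via a suitable universal extension of $\IZ$ (possibly iterated in the higher-rank case) against a line bundle pulled back from $X$, twisted by an appropriate line bundle pulled back from $\Xk$. The twists are dictated by the first Chern class appearing in $v$, and the main verification is that every fiber over $\Xk$ is locally free with Mukai vector $v$ — this reduces to a Cayley--Bacharach type vanishing statement for the chosen line bundle on $X$. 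Once this is done, the rank of $v$ together with the coprimality between the rank and the intersection $c_1(v)\cdot h$ forces stability of each fiber of $\mathcal{E}$, and the universal property of $M$ applied to $\IZ$ and to $\mathcal{E}$ produces mutually inverse morphisms identifying $M$ with $\Xk$ and $\mathcal{E}$ with the universal family.

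For part (2), flatness of $\mathcal{E}$ in the $X$-direction is automatic since $\mathcal{E}$ is locally free on $X\times M$ and the second projection is flat; every wrong-way fiber $E_x$ is again locally free. The serious task is $\mu_H$-stability of $E_x$. I would pick $H$ in the interior of the ample cone of $M$ but close to the boundary ray spanned by the nef class coming from the Hilbert--Chow morphism on $\Xk$, so that the slope of any subsheaf $F\subset E_x$ computed against $H^{2k-1}$ can be read off from data essentially pulled back from $X$ via the universal family. Given a hypothetical saturated destabilizer $F\subset E_x$, I would use the universal extension from part (1) to translate $F$ back to explicit data on $X$, and then rule it out using the specific shape of $v$; this is the hardest step, and it is where the particular form of $v$ in each of the two cases enters crucially. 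Openness of stability in a flat family then promotes stability at a single $x$ to stability for all $x\in X$.

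Part (3) is largely formal once (2) is in hand. The family $\mathcal{E}$ induces a classifying morphism $\varphi\colon X\to \mathcal{M}$ to the moduli space of $\mu_H$-stable sheaves on $M$ with the Mukai vector of $E_x$. I would verify successively: (i) $\varphi$ is injective, via a Fourier--Mukai type argument showing that the integral functor with kernel $\mathcal{E}$ is fully faithful, so that $E_x\not\cong E_{x'}$ for $x\neq x'$; (ii) the Kodaira--Spencer map yields an isomorphism $T_x X\xrightarrow{\sim}\Ext^1_M(E_x,E_x)$, showing simultaneously that $\varphi$ is unramified and that $\dim \Ext^1_M(E_x,E_x) = 2 = \dim X$; (iii) an obstruction computation rules out higher obstruction classes, so the component of $\mathcal{M}$ through $\varphi(x)$ is smooth of dimension $2$, $\varphi$ is an open immersion, and properness of $X$ then forces $\varphi(X)$ to be closed as well, hence a smooth connected component. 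The decisive obstacle throughout is the stability proof of part (2); parts (1) and (3) are essentially formal consequences of standard moduli-theoretic machinery once the correct extension and the correct polarization have been chosen.
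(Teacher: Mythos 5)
Your outline reproduces the architecture of the paper (construct $\mathcal{E}$ from $\IZ$ by an extension/twist procedure, prove stability of the wrong-way fibers for a polarization near the boundary of the ample cone, then conclude by a tangent-space computation), but at the two places where the actual mathematical work happens your proposed arguments do not suffice. First, in part (1) you assert that ``the rank of $v$ together with the coprimality between the rank and the intersection $c_1(v)\cdot h$ forces stability of each fiber of $\mathcal{E}$.'' Coprimality only shows that $\mu_h$-semistability implies $\mu_h$-stability; it does not show that the specific sheaves $E_Z$ you have constructed are semistable in the first place. In the Picard-rank-one case the paper gets stability of $E_Z=\ker\bigl(H^0(I_Z(1))\otimes\OO_X\to I_Z(1)\bigr)$ from Yoshioka's criterion for kernels of evaluation maps, and in the Picard-rank-two case the stability of $E_Z=\mathrm{\Theta}(I_Z)$ occupies the bulk of the proof of Theorem \ref{thm:2ndModuli}: it requires the effective-cone estimates of Lemma \ref{eff}, the elliptic fibration, and the vanishing result Lemma \ref{quot}. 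None of this is ``essentially formal.''

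Second, in part (2) two distinct gaps appear. The destabilization analysis for $E_x$ on $\Xk$ is not carried out by ``translating $F$ back to explicit data on $X$ via the universal family''; the mechanism in the paper is Stapleton's: one pulls everything back to the $k$-fold product $X^k$ through $(-)_{X^k}=j_\ast\overline{\sigma}_\circ^\ast((-)_\circ)$, exploits $\mathfrak{S}_k$-invariance of $(F)_{X^k}$ to constrain $c_1$, and rules out the remaining cases by a global-section argument (Propositions \ref{prop:stableK}, \ref{prop:stableEx} and \ref{prop:stable}). Moreover, ``openness of stability in a flat family'' does not promote $h_k$-stability to $\mu_H$-stability for a single ample $H$ valid for \emph{all} $x\in X$ simultaneously: openness for a fixed $H$ only gives an open locus in $X$, and the a priori $x$-dependent perturbations of $h_k$ need not have a common refinement. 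The paper resolves this by showing that $\bigcap_{x\in X}\SStab(E_x)$ is a full-dimensional convex set whose closure contains $h_k^{2k-1}$ and whose interior consists of genuine stability classes, using the finiteness of the set of possible destabilizing first Chern classes uniformly in $x$ (via $E_x\subseteq K$, resp.\ the auxiliary sheaf $E'_x$) together with the results of Greb--Kebekus--Peternell. Finally, a small but real error in part (3): the integral functor with kernel $\mathcal{E}$ is not fully faithful for $k\geqslant 2$ --- it is a $\PP^{k-1}$-functor, so $\Ext^{*}(E_x,E_y)\cong\Ext^{*}_X(-,-)\otimes H^{*}(\PP^{k-1},\CC)$; the needed vanishing of $\Hom(E_x,E_y)$ for $x\neq y$ and the identification $\Ext^1(E_x,E_x)\cong T_xX$ survive, but only because the extra cohomology of $\PP^{k-1}$ sits in degrees $\geqslant 2$.
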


Let us briefly explain how we achieved the above results. Our choices of the K3 surfaces and the Mukai vectors, as well as the explicit constructions of the moduli space $M$ and the universal family $\mathcal{E}$ in the above two cases, are motivated by \cite[Example 5.3.7]{huy} and \cite[Theorem 1.2]{muk} respectively. In fact, in both cases, the stable sheaves on $X$ are given by the spherical twist (or its inverse) of the ideal sheaves of $k$ points on $X$ around $\OO_X$, hence their corresponding moduli spaces $M$ are isomorphic to the Hilbert scheme $X^{[k]}$ of $k$ points on $X$. To show the slope stability of the wrong-way fibers $E_x$ with respect to some ample divisor $H$ on $M$, we apply the technique developed by Stapleton \cite{stapleton}; namely, we first prove the slope stability of $E_x$ with respect to a natural nef divisor on $M$ by passing to the $k$-fold product of $X$, then use the openness of stability to perturb the nef divisor to a nearby ample divisor. In fact, since the perturbation argument in \cite{stapleton} works only for individual sheaves, we need to generalize it so as to find an ample divisor $H$ with respect to which all $E_x$'s are simultaneously stable. Finally, to identify $X$ as a smooth connected component of some moduli space of stable sheaves on $M$, we interpret $E_x$'s as images of some sheaves or derived objects on $X$ under the integral functor $\mathrm{\Phi}$ induced by the universal ideal sheaf for $X^{[k]}$. By the fundamental result of Addington \cite{adding} that $\mathrm{\Phi}$ is a $\mathbb{P}^{k-1}$-functor, we can obtain, by computing the relevant cohomology groups, that the $E_x$'s are distinct and the tangent space of deformations of each $E_x$ is of dimension $2$, which leads immediately to the conclusion.

The text is organized in three sections. The first section gives background on integral functors, while the other two deal with the two cases mentioned above respectively. All objects in this text are defined over the field of complex numbers $\mathbb{C}$. 

\subsection*{Acknowledgement}
We thank Nicolas Addington and Andrew Wray for kindly sending us \cite{wray}. We also thank Norbert Hoffmann for communicating to us Lemma \ref{quot}. We are particularly grateful to the anonymous referee who helped to improve the presentation of the manuscript greatly, and pointed out a mistake in a previous version of Proposition \ref{prop:stable}. In particular, Lemmas \ref{lem:bothagree} and \ref{lem:Hdecomp} in the current version are due to the referee.

\section{\texorpdfstring{Background on spherical twists and $\mathbb{P}^{n}$-functors }%
                               {Background on spherical twists and Pn-functors}}
Let $X$ denote a smooth projective variety with $\dim(X)=d$. As we will need them later, we quickly recall some facts about spherical twists and $\mathbb{P}^n$-functors in this section.
\newpage

\begin{defi}
An object $\mathcal{S}\in \De(X)$ is called spherical if
\begin{center}
\begin{minipage}{.7\textwidth}
\begin{enumerate}[\normalfont i)]
\item $\mathcal{S}\otimes\omega_X \cong \mathcal{S}$
\item $\Ext^i(\mathcal{S},\mathcal{S})=\begin{cases}
\mathbb{C} & \text{if}\,\,i=0,d \\ 0 & \text{otherwise}
\end{cases}$
\end{enumerate}
\end{minipage}
\end{center}
\end{defi}

\begin{rem}
We note the fact that if $X$ is a K3 surface, then any $L\in \Pic(X)$ is spherical.
\end{rem}

Using spherical objects one can construct autoequivalences of $\De(X)$ in the following way: to any object $\mathcal{F}\in \De(X)$ one can associate the following object in $\De(X\times X)$:
\begin{equation*}
\mathcal{P}_{\mathcal{F}}:=\Co(\mathcal{F}^{\vee}\boxtimes \mathcal{F}\longrightarrow \mathcal{O}_{\mathrm{\Delta}}).
\end{equation*}
We refer to \cite[\S 8]{huy3} for an exact description of the map $\mathcal{F}^{\vee}\boxtimes \mathcal{F}\rightarrow \mathcal{O}_{\mathrm{\Delta}}$ and more information.

\begin{defi}
The spherical twist associated to a spherical object $\mathcal{S}\in\De(X)$ is the Fourier-Mukai transform
\begin{equation*}
T_{\mathcal{S}}\coloneqq\mathrm{\Phi}_{\mathcal{P}_{\mathcal{S}}}: \De(X) \longrightarrow \De(X)
\end{equation*}
with kernel $\mathcal{P}_{\mathcal{S}}$.
\end{defi}

The most important fact about the spherical twist is
\begin{prop}\label{twistequi}
Let $\mathcal{S}$ be a spherical object in $\De(X)$. Then the induced spherical twist
\begin{equation*}
T_{\mathcal{S}}: \De(X) \longrightarrow \De(X)
\end{equation*}
is an autoequivalence.
\end{prop}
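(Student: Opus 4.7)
The plan is to follow the classical strategy of Seidel--Thomas, which is exposed in detail in Huybrechts \cite{huy3}. The first step is to make the functor more concrete by applying the Fourier--Mukai transform to the distinguished triangle defining the kernel $\mathcal{P}_{\mathcal{S}}$. This yields, functorially in $\mathcal{F} \in \De(X)$, an evaluation triangle
\begin{equation*}
\mathrm{RHom}(\mathcal{S},\mathcal{F}) \otimes_{\mathbb{C}} \mathcal{S} \xrightarrow{\;\mathrm{ev}\;} \mathcal{F} \longrightarrow T_{\mathcal{S}}(\mathcal{F}) \longrightarrow [1],
\end{equation*}
which reduces the proposition to understanding how the evaluation map behaves on a sufficiently large class of test objects.

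Next I would compute $T_{\mathcal{S}}$ on the spanning class $\{\mathcal{S}\} \cup \mathcal{S}^{\perp}$. Taking $\mathcal{F}=\mathcal{S}$ and using condition (ii), the evaluation specializes to the canonical map $\mathcal{S} \oplus \mathcal{S}[-d] \to \mathcal{S}$, from which one reads off $T_{\mathcal{S}}(\mathcal{S}) \cong \mathcal{S}[1-d]$. Taking $\mathcal{F} \in \mathcal{S}^{\perp}$, the left term of the triangle vanishes, so $T_{\mathcal{S}}(\mathcal{F}) \cong \mathcal{F}$. That $\{\mathcal{S}\} \cup \mathcal{S}^{\perp}$ is genuinely a spanning class follows from the usual semi-orthogonal decomposition argument: any object right-orthogonal to both pieces must be zero. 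With these two computations in hand, the verification that $T_{\mathcal{S}}$ induces isomorphisms on $\Hom$-spaces between spanning class objects reduces to bookkeeping with shifts, except for the mixed term $\Hom(\mathcal{F},\mathcal{S}[n])$ with $\mathcal{F} \in \mathcal{S}^{\perp}$, where the Calabi--Yau condition (i), $\mathcal{S} \otimes \omega_X \cong \mathcal{S}$, is used through Serre duality to convert it back to a Hom of the form already treated.

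Fully faithfulness then follows from Bridgeland's criterion applied to this spanning class. To upgrade fully faithful to an autoequivalence, I would invoke the further criterion that an FM transform between bounded derived categories of smooth projective varieties which is fully faithful and commutes with the Serre functors is automatically an equivalence; the compatibility with Serre functors is visible directly from the symmetric shape of the kernel $\mathcal{P}_{\mathcal{S}}$ together with $\mathcal{S} \otimes \omega_X \cong \mathcal{S}$. The main obstacle, in my view, is the careful handling of the mixed $\Hom$ computation and the Serre compatibility step: these are the places where condition (i) is actually consumed, and any sloppiness about left versus right adjoints of the FM kernel tends to accumulate there. The purely formal parts---constructing the triangle, computing $T_{\mathcal{S}}(\mathcal{S})$, and showing $T_{\mathcal{S}}|_{\mathcal{S}^{\perp}} \cong \mathrm{id}$---should be essentially immediate once the evaluation triangle is in place.
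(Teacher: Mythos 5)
Your outline is correct: it is precisely the Seidel--Thomas argument (in the form presented in Huybrechts' book) — evaluation triangle, the spanning class $\{\mathcal{S}\}\cup\mathcal{S}^{\perp}$ with $T_{\mathcal{S}}(\mathcal{S})\cong\mathcal{S}[1-d]$ and $T_{\mathcal{S}}|_{\mathcal{S}^{\perp}}\cong\id$, Bridgeland's fully-faithfulness criterion, and the Serre-functor compatibility of the kernel to upgrade to an equivalence. The paper itself gives no proof and simply cites Seidel--Thomas for exactly this argument, so your proposal matches the intended route.
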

A proof of this proposition was given by Seidel and Thomas, see \cite[Theorem 1.2]{seidel}.

\begin{rem}\label{computetwist}
By \cite[Exercise 8.5]{huy3} the effect of the spherical twist $T_{\mathcal{S}}$ on an object $\mathcal{G}\in\De(X)$ can be described by the following distinguished triangle:
\begin{equation*}
T_{\mathcal{S}}(\mathcal{G})[-1]\longrightarrow R\Hom(\mathcal{S},\mathcal{G})\otimes \mathcal{S} \longrightarrow \mathcal{G} \longrightarrow T_{\mathcal{S}}(\mathcal{G}).
\end{equation*}
As the spherical twist $T_{\mathcal{S}}$ is an autoequivalence one can also study the inverse $T^{-1}_{\mathcal{S}}$. For any object $\mathcal{G}\in\De(X)$ there exists the following distinguished triangle, see \cite[Remark 8.11]{huy3}:
\begin{equation*}
T_{\mathcal{S}}^{-1}(\mathcal{G}) \longrightarrow \mathcal{G} \longrightarrow R\Hom(\mathcal{S},\mathcal{G})\otimes\mathcal{S}[d]\longrightarrow T_{\mathcal{S}}^{-1}(\mathcal{G})[1].
\end{equation*}
\end{rem}

We are also interested in another class of integral functors, the so-called $\mathbb{P}^n$-functors, which were introduced by Addington in a very general setting in \cite[\S 4]{adding}. We will only need the following special example: 

\begin{exam}\label{pn-func}
Let $X$ be a K3 surface, then the integral functor
\begin{equation*}\label{eqn:defPhi}
	\mathrm{\Phi} \colon \De(X) \longrightarrow \De(\X)
\end{equation*}
whose kernel is the universal ideal sheaf $\IZ$ on $X \times X^{[k]}$ is a $\PP^{k-1}$-functor with corresponding autoequivalence $H=[-2]$ by \cite[Theorem 3.1, Example 4.2(2)]{adding}. 
\end{exam}

\begin{rem}\label{pn-func2}
The fact that the above integral functor $\mathrm{\Phi}$ is a $\mathbb{P}^{k-1}$-functor with the corresponding autoequivalence $H=[-2]$ has the following useful consequence, see \cite[\S 2.1]{add16-2}: for any $E, F\in \De(X)$ we have an isomorphism of graded vector spaces
\begin{equation*}\label{eqn:pnfunc}
	\Ext^{*}_{\X}(\mathrm{\Phi}(E),\mathrm{\Phi}(F))\cong \Ext^{*}_X(E,F)\otimes H^{*}(\PP^{k-1},\CC).
\end{equation*}
\end{rem}

\section{K3 surfaces with Picard number one}\label{sect1}

Throughout this section we assume $X$ is a K3 surface such that $\NS(X) = \ZZ h$, where $h$ is an ample class with $h^2 = 4k$. We denote the line bundle associated to $h$ by $\OO_X(1)$ and the Hilbert scheme of length $k$ subschemes of $X$ by $\X$.

\subsection{Explicit construction of a universal family}

In this subsection we generalize \cite[Example 5.3.7]{huy} to give an explicit construction of a universal family of stable vector bundles on $X$ parametrized by the Hilbert scheme $X^{[k]}$ for $k\geqslant 1$. Let $h$ be the ample generator of $\NS(X)$ and $v=(k+1,-h,1)\in H^{*}_{\text{alg}}(X,\mathbb{Z})$. We have the following facts:

\begin{lem}
The moduli space $M_h(v)$ of $\mu_h$-stable sheaves on $X$ with Mukai vector $v$ is a smooth projective variety of dimension $2k$ and a fine moduli space. Furthermore every point $[E]\in M_h(v)$ represents a locally free sheaf.
\end{lem}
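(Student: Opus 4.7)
The plan is to reduce all three assertions to standard results of Mukai on moduli spaces of sheaves on K3 surfaces, after carrying out three preliminary checks: primitivity of $v$, the coincidence of $\mu_h$-stability with $\mu_h$-semistability, and local freeness of every stable sheaf.

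First I would compute the Mukai self-pairing
\[
\langle v,v\rangle = h^2 - 2(k+1)\cdot 1 = 4k - 2(k+1) = 2k-2,
\]
so that the expected dimension of $M_h(v)$ is $\langle v,v\rangle + 2 = 2k$. Since the third component of $v$ equals $1$, the vector $v$ is primitive in $H^{*}_{\text{alg}}(X,\ZZ)$. To rule out strictly $\mu_h$-semistable sheaves with Mukai vector $v$, I would suppose $F\subset E$ is a destabilizing subsheaf of rank $r'$ with $c_1(F) = c'h$ (possible since $\NS(X) = \ZZ h$); equality of $h$-slopes then forces $c'/r' = -1/(k+1)$, whose only positive-rank integer solutions have $r' \in (k+1)\ZZ_{>0}$, incompatible with $0 < r' < k+1$. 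Hence every $\mu_h$-semistable sheaf of Mukai vector $v$ is in fact $\mu_h$-stable, and therefore Gieseker stable. By Mukai's theorem, $M_h(v)$ is smooth projective of dimension $2k$; and since $\langle v,(-1,0,0)\rangle = 1$, a standard criterion produces a universal family, making $M_h(v)$ a fine moduli space.

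The substantive remaining step is local freeness. For any $[E] \in M_h(v)$, $\mu_h$-stability forces $E$ to be torsion-free on the smooth surface $X$, so the double dual $E^{\vee\vee}$ is locally free and $Q := E^{\vee\vee}/E$ is a zero-dimensional sheaf of some length $\ell \geqslant 0$. A short argument shows $E^{\vee\vee}$ is again $\mu_h$-stable: any subsheaf $F \subsetneq E^{\vee\vee}$ of rank less than $\rk E$ intersects $E$ in a subsheaf $F \cap E$ of the same rank and the same $c_1$ (the quotient embeds into $Q$ and is thus zero-dimensional), so $\mu_h(F) = \mu_h(F \cap E) < \mu_h(E) = \mu_h(E^{\vee\vee})$. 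The Mukai vector of $E^{\vee\vee}$ is $(k+1,-h,1+\ell)$, yielding
\[
\langle v(E^{\vee\vee}),v(E^{\vee\vee})\rangle = 4k - 2(k+1)(1+\ell) = 2k - 2 - 2(k+1)\ell.
\]
Since $E^{\vee\vee}$ is simple, Serre duality gives $\chi(E^{\vee\vee},E^{\vee\vee}) = 2 - \dim\Ext^1(E^{\vee\vee},E^{\vee\vee}) \leqslant 2$, so the self-pairing is at least $-2$. This inequality forces $\ell \leqslant k/(k+1) < 1$, whence $\ell = 0$ and $E = E^{\vee\vee}$ is locally free. This final step is the one I expect to require the most care, as it is the only part that does not reduce to a direct citation of Mukai's general framework; everything else follows immediately once primitivity and the absence of strictly semistable objects have been established.
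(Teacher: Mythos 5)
Your proposal is correct and follows essentially the same route as the paper: dimension via $\langle v,v\rangle+2$, absence of strictly $\mu_h$-semistable sheaves from $\rho(X)=1$, fineness from a numerical (gcd-type) criterion, and local freeness from the non-existence of stable sheaves with Mukai vector $(k+1,-h,1+\ell)$ for $\ell\geqslant 1$. Your double-dual argument merely spells out in detail what the paper summarizes as ``the second Chern class is minimal.''
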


\begin{proof}
We note that every $\mu_h$-semistable sheaf $E$ with $v(E)=v$ is $\mu_h$-stable as $\rho(X)=1$. Thus $M_h(v)$ is a smooth projective variety. We compute:
\begin{equation*}
	\dim(M_h(v))=v^2+2=4k-2(k+1)+2=2k.
\end{equation*}
Furthermore $v'=(k+1,-h,a)$ with $a\geqslant 2$ satisfies
\begin{equation*}
	v'^2+2=4k-2a(k+1)+2\leqslant 4k-4(k+1)+2 =-2 < 0,
\end{equation*}
and thus the second Chern class is minimal (here $c_2(E)=3k$). This minimality implies that every point $[E]$ in $M_h(v)$ is given by a locally free sheaf $E$. The condition $\text{gcd}(k+1,1)=1$ implies that $M_h(v)$ is a fine moduli space  by \cite[Remark 4.6.8]{huy}.
\end{proof}

The following lemma produces examples of elements in this moduli space:
\begin{lem}\label{ideal}
	For any $[Z]\in\X$ the sheaf $I_Z(1)$ is globally generated, i.e. the evaluation morphism
	\begin{equation*}
		\Ev: H^0(I_Z(1))\otimes \OO_X \rightarrow I_Z(1)
	\end{equation*}
	is surjective. Furthermore $E_Z:=\ker(\Ev)$ is a $\mu_h$-stable locally free sheaf
	with Mukai vector given by $v(E_Z) = (k+1, -h, 1)$. 
\end{lem}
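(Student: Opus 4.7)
My plan is to handle the lemma in the order (i) global generation, (ii) Mukai vector, (iii) local freeness, (iv) $\mu_h$-stability; I expect the last step to be the main technical obstacle.

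For global generation, the key input is that $\OO_X(1)$ is $k$-very ample under our assumption $\NS(X) = \ZZ h$ with $h^2 = 4k$. Riemann--Roch together with Kodaira vanishing on the K3 surface $X$ gives $h^0(\OO_X(1)) = h^2/2 + 2 = 2k+2$. The absence of rigid or elliptic curve classes forced by $\NS(X) = \ZZ h$ is precisely what allows Knutsen's criterion (or an analogous argument exploiting Picard number one) to yield $k$-very ampleness of $\OO_X(1)$. From this one concludes both that $I_Z(1)$ is globally generated and that $Z$ imposes independent conditions on $|\OO_X(1)|$, so that $h^0(I_Z(1)) = k+2$. The Mukai vector computation is then routine from the defining sequence
\begin{equation*}
0 \to E_Z \to H^0(I_Z(1)) \otimes \OO_X \to I_Z(1) \to 0,
\end{equation*}
using $v(I_Z(1)) = v(\OO_X(1)) - v(\OO_Z) = (1, h, k+1)$, which gives $v(E_Z) = (k+1, -h, 1)$. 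Local freeness of $E_Z$ follows immediately because on the smooth surface $X$ the ideal sheaf of any zero-dimensional subscheme has projective dimension one, so $I_Z(1)$ does as well, and the kernel of a surjection from a locally free sheaf onto it must then be locally free.

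The harder step is $\mu_h$-stability, which I would prove by contradiction. Suppose $F \subset E_Z$ is a subsheaf with $0 < r := \rk(F) < k+1$ and $\mu_h(F) \geq \mu_h(E_Z) = -4k/(k+1)$. Since $\NS(X) = \ZZ h$, I write $c_1(F) = ah$; the slope inequality then reads $a(k+1) \geq -r$, which combined with $r < k+1$ and $a \in \ZZ$ forces $a \geq 0$. Let $G$ be the saturation of $F$ inside $\OO_X^{k+2}$. Because the inclusion $E_Z \hookrightarrow \OO_X^{k+2}$ is already saturated (its cokernel $I_Z(1)$ is torsion-free), one obtains $G \subset E_Z$, and since $G/F$ is supported in dimension zero we still have $c_1(G) = ah$. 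On the other hand, the saturated quotient $\OO_X^{k+2}/G$ is torsion-free and globally generated, so its determinant is effective; this gives $c_1(G) \leq 0$ and, combined with $a \geq 0$, forces $a = 0$ and $\det G \cong \OO_X$.

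Finally, the inclusion $G \hookrightarrow \OO_X^{k+2}$ corresponds to a classifying morphism $X \to \Gr(r, k+2)$ pulling back $\OO(1)$ to a trivial line bundle; since $X$ is projective and $\OO(1)$ is ample on the Grassmannian, such a morphism must be constant, so $G \cong \OO_X^r$ is a trivial subbundle of $\OO_X^{k+2}$ lying inside $E_Z$. This would give $\CC^r = H^0(G) \subset H^0(E_Z)$, contradicting the vanishing $H^0(E_Z) = 0$; the latter is immediate from applying $H^0$ to the defining sequence, where the evaluation map $H^0(\OO_X^{k+2}) = H^0(I_Z(1)) \to H^0(I_Z(1))$ is the identity. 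This contradiction establishes the $\mu_h$-stability of $E_Z$.
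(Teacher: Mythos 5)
Your proposal reaches the right conclusions and is correct in substance, but it diverges from the paper precisely at the two places where the paper does real work or leans on a reference. For global generation, the paper does not merely invoke $k$-very ampleness of $\OO_X(1)$: the whole point of its argument is the implication ``$k$-very ample $\Rightarrow$ $I_Z(1)$ globally generated for $\ell(Z)=k$'', which it proves by assuming $\Ev$ fails to be surjective at a point $x$, producing a length-$(k+1)$ subscheme $Z'\supset Z$ with $H^1(I_{Z'}(1))\cong H^0(\OO_x)\neq 0$, and contradicting the surjectivity of $H^0(\OO_X(1))\to H^0(\OO_{Z'}(1))$ guaranteed by $k$-very ampleness. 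You assert this implication without proof; it is standard, but it is exactly the content of the paper's proof and should be written out. Your computations of $h^0(I_Z(1))=k+2$, of the Mukai vector, and the local-freeness argument via projective dimension coincide with the paper's. For stability the situation is reversed: the paper simply cites Yoshioka's lemma on kernels of evaluation maps, whereas you give a self-contained elementary argument (the slope bound forces $c_1(F)=ah$ with $a\geqslant 0$; saturating inside $\OO_X^{k+2}$ and using that a globally generated torsion-free quotient has effective determinant forces $a=0$; a trivial subsheaf of $E_Z$ then contradicts $H^0(E_Z)=0$). This is a genuinely different route, essentially reproving the cited lemma, and it is sound; it makes the lemma self-contained at the cost of length.

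Two slips in the stability step need repair, though neither is fatal. First, the saturation quotient $G/F$ is torsion but need not be supported in dimension zero, so you may not conclude $c_1(G)=c_1(F)$; you only get that $c_1(G)-c_1(F)$ is effective, i.e.\ $c_1(G)=a'h$ with $a'\geqslant a\geqslant 0$. The argument survives because the effectivity of $c_1(\OO_X^{k+2}/G)$ then forces $a'=0$, which is all you use downstream. Second, the classifying morphism to $\Gr(r,k+2)$ is only defined off the finite locus where the torsion-free quotient $\OO_X^{k+2}/G$ fails to be locally free, so ``pulls back an ample bundle to a trivial one, hence is constant'' needs a word of justification: for instance, note that $G$ is locally free (kernel of a map from a locally free to a torsion-free sheaf on a smooth surface) with $\det G\cong\OO_X$, that the induced nonzero map $\OO_X\cong\det G\to\mathrm{\Lambda}^r\OO_X^{k+2}\cong\OO_X^{\binom{k+2}{r}}$ is a nonzero constant vector, and hence that $G$ agrees generically, and then everywhere (both being saturated in $\OO_X^{k+2}$), with a constant subspace $\OO_X^{r}\subset\OO_X^{k+2}$. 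With these repairs your proof is complete.
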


\begin{proof}
	The standard exact sequence
	\begin{equation}\label{idealseq}
		\begin{tikzcd}
			0 \arrow[r] & I_Z(1) \arrow[r] & \OO_X(1) \arrow[r] & \OO_Z(1) \arrow[r] & 0
		\end{tikzcd}
	\end{equation}	
	shows
	\begin{equation*}
		\chi(I_Z(1))=\chi(\OO_X(1))-\chi(\OO_Z(1))=(2k+2)-k=k+2.
	\end{equation*}
	
	Since $Z$ has codimension two in X, using Serre duality gives
	\begin{equation*}
		H^2(I_Z(1))\cong \Hom(I_Z(1),\OO_X)^{\vee} \cong H^0(\OO_X(-1))^{\vee}=0. 
	\end{equation*}
	
	By \cite[Proposition 3.7]{debar},
	the line bundle $\OO_X(1)$ is $k$-very ample which implies that the exact sequence of global sections attached to \eqref{idealseq}
	\begin{equation*}
		\begin{tikzcd}
			0 \arrow[r] & H^0(I_Z(1)) \arrow[r] & H^0(\OO_X(1)) \arrow[r] & H^0(\OO_Z(1)) \arrow[r] & 0
		\end{tikzcd}
	\end{equation*}	 
	is still exact. This implies $H^1(I_Z(1))\cong H^1(\OO_X(1))=0$ and thus
	\begin{equation*}
		\dim(H^0(I_Z(1)))=\chi(I_Z(1))=k+2.
	\end{equation*}
	
	Now if the evaluation map is not surjective, let $Q:=\cok(\Ev)$ and pick $x\in \supp(Q)$. Then we have an exact sequence
	\begin{equation*}
		\begin{tikzcd}
			0 \arrow[r] & I_{Z'}(1) \arrow[r] & I_Z(1) \arrow[r] & \OO_x \arrow[r] & 0
		\end{tikzcd}
	\end{equation*}
	for a length $k+1$ subscheme $Z'$ containing $Z$. 
	
	Since $I_Z(1)$ is not globally generated at $x$ the last exact sequence gives isomorphisms 
	\begin{equation*}
		H^0(I_{Z'}(1))\cong H^0(I_Z(1))\,\,\,\text{and}\,\,\,H^1(I_{Z'}(1))\cong H^0(\OO_x)\neq 0.
	\end{equation*}
	But $\OO_X(1)$ is $k$-very ample so by definition
	\begin{equation*}
		\begin{tikzcd}
			0 \arrow[r] & H^0(I_{Z'}(1)) \arrow[r] & H^0(\OO_X(1)) \arrow[r] & H^0(\OO_{Z'}(1)) \arrow[r] & 0
		\end{tikzcd}
	\end{equation*}
	is still exact, which implies $H^1(I_{Z'}(1))=0$, a contradiction. So $\Ev$ is indeed surjective and we have an exact sequence:
	\begin{equation}\label{defEZ}
		\begin{tikzcd}
			0 \arrow[r] & E_Z \arrow[r] & H^0(I_Z(1))\otimes\OO_X \arrow[r] & I_Z(1) \arrow[r] & 0.
		\end{tikzcd}
	\end{equation}
	Computing invariants shows $\rk(E_Z)=k+1$, $c_1(E_Z)=-h$ and $c_2(E_Z)=3k$, hence indeed $v(E_Z) = (k+1, -h, 1)$. The sheaf $E_Z$ is locally free as it is the kernel of a morphism between a locally free and a torsion free sheaf on a smooth surface. The stability of $E_Z$ follows from \cite[Lemma 2.1 (2-2)]{yosh}.
\end{proof}

We can globalize the construction in Lemma \ref{ideal}: let $\mathcal{Z}\subset X\times \X$ denote the universal length $k$ subscheme, $\mathcal{I}_{\mathcal{Z}}$ its ideal sheaf. There are projections $p: X\times \X \rightarrow \X$ as well as $q:X\times \X \rightarrow X$. Define a sheaf $\mathcal{E}$ on $X\times \X$ by the exact sequence
\begin{equation}\label{univ}
	\begin{tikzcd}
		0 \arrow[r] & \mathcal{E} \arrow[r] & p^{*}(p_{*}(\mathcal{I}_{\mathcal{Z}}\otimes q^{*}\OO_X(1))) \arrow[r] & \mathcal{I}_{\mathcal{Z}}\otimes q^{*}\OO_X(1) \arrow[r] & 0.
	\end{tikzcd}
\end{equation}
Then $\mathcal{E}$ is $p$-flat and $\mathcal{E}_{|p^{-1}(Z)}\cong E_Z$, which implies that $\mathcal{E}$ is locally free on $X\times \X$ by \cite[Lemma 2.1.7]{huy}. Thus $\mathcal{E}$ defines a classifying morphism
\begin{equation*}
\varphi: \X \rightarrow M_h(v),\,\, [Z]\mapsto \left[E_Z \right].
\end{equation*}

In fact we have:

\begin{thm}\label{thm:1stModuli}
The classifying morphism $\varphi: \X\rightarrow M_h(v)$ is an isomorphism.
\end{thm}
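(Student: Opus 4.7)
The plan is to identify the classifying morphism $\varphi$ with the map on moduli induced by the spherical twist $T_{\mathcal{O}_X}$, and then deduce that $\varphi$ is an isomorphism from the fact that $T_{\mathcal{O}_X}$ is an autoequivalence of $\De(X)$ (cf.~Proposition \ref{twistequi}).

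First I would apply the distinguished triangle of Remark \ref{computetwist} with $\mathcal{S} = \mathcal{O}_X$ (which is spherical on the K3 surface $X$) and $\mathcal{G} = I_Z(1)$. The vanishings $H^1(I_Z(1)) = H^2(I_Z(1)) = 0$ obtained in the proof of Lemma \ref{ideal} imply that $R\Hom(\mathcal{O}_X, I_Z(1))$ is concentrated in degree zero and equal to $H^0(I_Z(1))$, while the natural map $H^0(I_Z(1)) \otimes \mathcal{O}_X \to I_Z(1)$ is precisely the surjective evaluation map. Comparing with the defining sequence \eqref{defEZ} of $E_Z$, this identifies $T_{\mathcal{O}_X}(I_Z(1)) \cong E_Z[1]$. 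Carrying out the same construction relatively over $\X$, with $\mathcal{I}_{\mathcal{Z}} \otimes q^*\mathcal{O}_X(1)$ in place of $I_Z(1)$, recovers the universal family $\mathcal{E}$ of \eqref{univ}; thus $\varphi$ is, up to the shift $[1]$, the morphism on moduli induced by the autoequivalence $T_{\mathcal{O}_X}$.

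With this identification in hand, the isomorphism statement follows from two properties. For injectivity on closed points: any isomorphism $E_Z \cong E_{Z'}$ yields $E_Z[1] \cong E_{Z'}[1]$ in $\De(X)$, which via $T_{\mathcal{O}_X}^{-1}$ corresponds to $I_Z(1) \cong I_{Z'}(1)$ and hence $Z = Z'$. For unramifiedness at each point, the autoequivalence induces an isomorphism
\[
\Ext^1_X(I_Z, I_Z) \;=\; \Ext^1_X(I_Z(1), I_Z(1)) \;\xrightarrow{\sim}\; \Ext^1_X(E_Z[1], E_Z[1]) \;=\; \Ext^1_X(E_Z, E_Z),
\]
which coincides with the differential $d\varphi_{[Z]}$ under the standard identifications of the tangent spaces of $\X$ and $M_h(v)$ with these $\Ext^1$ groups. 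Since $\X$ and $M_h(v)$ are both smooth irreducible projective varieties of dimension $2k$, any proper morphism between them that is unramified and injective on closed points is a closed immersion, and by comparison of dimensions with the irreducible target it must then be surjective, hence an isomorphism.

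The main technical point to verify is that $d\varphi$ really does agree with the $\Ext^1$-isomorphism induced by $T_{\mathcal{O}_X}$. This amounts to checking that the construction of $\mathcal{E}$ in \eqref{univ} is compatible with a relative version of the spherical twist along the $X$-direction, so that the Kodaira--Spencer map associated to $\mathcal{E}$ at $[Z]$ matches the Ext isomorphism induced by $T_{\mathcal{O}_X}$ on first-order deformations. Once this compatibility is granted, the remainder of the argument is essentially formal.
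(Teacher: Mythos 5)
Your proposal is correct and follows essentially the same route as the paper: both identify $E_Z$ with the shifted spherical twist $T_{\mathcal{O}_X}(I_Z(1))[1]$ by comparing the triangle of Remark \ref{computetwist} with the defining sequence \eqref{defEZ}, and deduce injectivity of $\varphi$ on closed points from the fact that $T_{\mathcal{O}_X}$ is an autoequivalence. The additional unramifiedness check via the induced isomorphism on $\Ext^1$ (which you rightly flag as needing a compatibility with the Kodaira--Spencer map) is not actually required: as in the paper, injectivity together with properness, smoothness, equal dimension $2k$, and irreducibility of both spaces already forces $\varphi$ to be an isomorphism in characteristic zero.
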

\begin{proof}
Looking at Remark \ref{computetwist} we see that the sheaf $E_Z$ defined by the exact seqeunce \eqref{defEZ} is nothing but the shifted spherical twist of $I_Z(1)$ around $\mathcal{O}_X$, more exactly we have
\begin{equation*}
E_Z=T_{\mathcal{O}_X}(I_Z(1))[1],
\end{equation*}
similar to \cite[Example 10.3.6]{huy2}. By Proposition \ref{twistequi} the spherical twist $T_{\mathcal{O}_X}$ is an autoequivalence of $\De(X)$ likewise is the shift $[1]$. But then the classifying morphism
\begin{equation*}
\varphi: \X \rightarrow M_h(v),\,\,\, [Z]\mapsto\left[ E_Z \right]=\left[T_{\mathcal{O}_X}(I_Z(1))[1] \right]  
\end{equation*}
is a composition of autoequivalences and thus maps non-isomorphic objects to non-isomorphic objects, hence $\varphi$ is injective on closed points. Since both $\X$ and $M_h(v)$ are smooth of dimension $2k$ the morphism $\varphi$ is an open embedding and thus an isomorphism as both spaces are irreducible.
\end{proof}

\subsection{Stability of wrong-way fibers}\label{stabil1}

In the above section, we explicitly constructed a universal family $\mathcal{E}$, which is a locally free sheaf on $X \times X^{[k]}$. In this section we take the alternative point of view and consider $\mathcal{E}$ as a family of vector bundles on $X^{[k]}$ parametrized by $X$. A ``wrong-way fiber" of $\mathcal{E}$ is just the restriction of $\mathcal{E}$ over a point $x\in X$ which gives a locally free sheaf on $\X$.

More precisely, we first note that by standard cohomology and base change arguments
\begin{equation*}
p_{*}(\mathcal{I}_{\mathcal{Z}}\otimes q^{*}\OO_X(1))\otimes \OO_{[Z]} \rightarrow H^0(I_Z(1))
\end{equation*}
is an isomorphism. Hence
\begin{equation}\label{eqn:K}
	K:=p_{*}(\mathcal{I}_{\mathcal{Z}}\otimes q^{*}\OO_X(1))
\end{equation}
is a locally free sheaf of rank $k+2$ on $\X$. This implies that $\mathcal{E}$ is not only $p$-flat, but also $q$-flat since $\mathcal{I}_{\mathcal{Z}}\otimes q^{*}\OO_X(1)$ is both $p$- and $q$-flat by \cite[Theorem 2.1]{krug1}. Thus we can restrict the exact sequence \eqref{univ} to the fiber over a point $x\in X$ and get the following description of the fiber $E_x:=\mathcal{E}_{|q^{-1}(x)}$:
\begin{equation}\label{defwrong}
	\begin{tikzcd}
		0 \arrow[r] & E_x \arrow[r] & K \arrow[r] & I_{S_x} \arrow[r] & 0,
	\end{tikzcd}
\end{equation}
where $S_x:=\left\lbrace [Z]\in \X\,|\, x\in \supp(Z) \right\rbrace$ is a codimension 2 subscheme of $\X$. Hence $E_x$ is a locally free sheaf of rank $k+1$ on $\X$.

Before proving the stability of $E_x$ with respect to some ample class $H \in \NS(\X)$, we recall that for any coherent sheaf $F$ on $X$ there is the associated coherent \emph{tautological sheaf} $F^{[k]}$ on $\X$ defined  by
 \begin{equation}\label{eqn:taut}
 	F^{[k]}:=p_{*}\left(q^{*}F\otimes \OO_{\mathcal{Z}} \right). 
 \end{equation}
If $F$ is locally free of rank $r$ then $F^{[k]}$ is locally free of rank $kr$.

Also recall the well-known fact that $\NS(\X)=\NS(X)_k\oplus \mathbb{Z}\delta$. Here $d_k$ is the divisor class on $\X$ induced by the divisor class $d$ on $X$ and $\delta$ is a divisor class on $\X$ such that $2\delta=[E]$ where $E$ is the exceptional divisor of the Hilbert-Chow morphism $\X\rightarrow X^{(k)}$. In our case this reads
\begin{equation*}
	\NS(\X)=\ZZ h_k\oplus\ZZ\delta.
\end{equation*}

\begin{lem}
\label{lem:Chernclass}
We have $c_1(E_x)=-h_k+\delta$.
\end{lem}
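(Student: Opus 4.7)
The plan is to read off the first Chern class directly from the defining exact sequence \eqref{defwrong}. Since $S_x$ is a closed subscheme of codimension $2$ in $X^{[k]}$, its ideal sheaf satisfies $c_1(I_{S_x}) = 0$, so \eqref{defwrong} yields $c_1(E_x) = c_1(K)$. The whole problem therefore reduces to computing $c_1(K)$ for the rank $k+2$ locally free sheaf $K = p_*(\mathcal{I}_{\mathcal{Z}} \otimes q^*\mathcal{O}_X(1))$ defined in \eqref{eqn:K}.

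For this, I would start with the standard ideal sheaf sequence on $X \times X^{[k]}$ twisted by $q^*\mathcal{O}_X(1)$,
$$0 \longrightarrow \mathcal{I}_{\mathcal{Z}} \otimes q^*\mathcal{O}_X(1) \longrightarrow q^*\mathcal{O}_X(1) \longrightarrow \mathcal{O}_{\mathcal{Z}} \otimes q^*\mathcal{O}_X(1) \longrightarrow 0,$$
and apply $p_*$. The vanishing $H^1(X, I_Z(1)) = 0$ established in the proof of Lemma \ref{ideal}, combined with cohomology and base change, implies $R^1 p_*(\mathcal{I}_{\mathcal{Z}} \otimes q^*\mathcal{O}_X(1)) = 0$, so the pushforward remains short exact:
$$0 \longrightarrow K \longrightarrow p_* q^*\mathcal{O}_X(1) \longrightarrow \mathcal{O}_X(1)^{[k]} \longrightarrow 0,$$
where the rightmost term is identified as the tautological sheaf via \eqref{eqn:taut}. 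Flat base change along the cartesian square given by $p$ and $q$ gives $p_* q^*\mathcal{O}_X(1) \cong H^0(X, \mathcal{O}_X(1)) \otimes \mathcal{O}_{X^{[k]}}$, which is trivial, so that $c_1(K) = -c_1(\mathcal{O}_X(1)^{[k]})$.

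The last ingredient is the standard formula for the determinant of the tautological sheaf of a line bundle, namely $c_1(L^{[k]}) = (c_1 L)_k - \delta$ for any $L \in \Pic(X)$. Applying this to $L = \mathcal{O}_X(1)$ yields $c_1(\mathcal{O}_X(1)^{[k]}) = h_k - \delta$, and hence $c_1(E_x) = -(h_k - \delta) = -h_k + \delta$, as claimed. The only subtlety is keeping track of the sign conventions for $\delta$ in the tautological bundle formula, given the normalization $2\delta = [E]$ with $E$ the exceptional divisor of the Hilbert–Chow morphism; once this is fixed, the whole argument is a direct Chern class computation with no substantial obstacle.
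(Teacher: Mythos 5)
Your proof is correct and follows essentially the same route as the paper: push forward the twisted ideal sheaf sequence using $R^1p_*(\mathcal{I}_{\mathcal{Z}}\otimes q^*\mathcal{O}_X(1))=0$, identify the quotient with the tautological sheaf $\mathcal{O}_X(1)^{[k]}$ and the middle term with the trivial bundle $H^0(\mathcal{O}_X(1))\otimes\mathcal{O}_{X^{[k]}}$, and apply Wandel's formula $c_1(L^{[k]})=(c_1L)_k-\delta$. The only cosmetic difference is that you reduce to $c_1(K)$ via \eqref{defwrong} at the start rather than at the end.
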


\begin{proof}
There is the exact sequence:
\begin{equation*}
	\begin{tikzcd}
		0 \arrow[r] & p_{*}(\mathcal{I}_{\mathcal{Z}}\otimes q^{*}\OO_X(1)) \arrow[r] &  p_{*}q^{*}\OO_X(1) \arrow[r] & p_{*}(\OO_{\mathcal{Z}}\otimes q^{*}\OO_X(1)) \arrow[r] & 0
	\end{tikzcd}
\end{equation*}
as $R^1p_{*}(\mathcal{I}_{\mathcal{Z}}\otimes q^{*}\OO_X(1))=0$ since $H^1(I_Z(1))=0$ for all $[Z]\in \X$.

We also have 
\begin{equation*}
p_{*}q^{*}\OO_X(1)\cong H^0(\OO_X(1))\otimes\OO_{\X}
\end{equation*}
and the sheaf $p_{*}(\OO_{\mathcal{Z}}\otimes q^{*}\OO_X(1))$ is nothing but the tautological sheaf $\OO_X(1)^{[k]}$ associated to $\OO_X(1)$ on $\X$. By \cite[Remark 3.20.]{krug} we also have $H^0(\OO_X(1)^{[k]})=H^0(\OO_X(1))$. Thus, the above exact sequence can be rewritten as
\begin{equation}
	\label{eqn:sheafK}
	\begin{tikzcd}
		0 \arrow[r] & K \arrow[r] & H^0(\OO_X(1)^{[k]})\otimes \OO_{\X} \arrow[r] & \OO_X(1)^{[k]} \arrow[r] & 0.
	\end{tikzcd}
\end{equation}

Using \cite[Lemma 1.5]{wandel} we get
\begin{equation*}
c_1(K)=-c_1(\OO_X(1)^{[k]})=-h_k+\delta.
\end{equation*}
Now exact sequence \eqref{defwrong} gives $c_1(E_x)=c_1(K)=-h_k+\delta$.
\end{proof}

To compute slopes on $\X$ we need the following intersection numbers, which can, for example, be found in \cite[Lemma 1.10]{wandel}:

\begin{lem}\label{intersect}
	For the classes $h_k$ and $\delta$ from $\NS(\X)$ we have:
	\begin{itemize}
		\setlength\itemsep{1em}
		\item $h_k^{2k}=\frac{(2k-1)!}{(k-1)!2^{k-1}}(h^2)^{k}=\frac{(2k-1)!2^{k+1}}{(k-1)!}k^k > 0$
		\item $h_k^{2k-1}\delta=0$.
	\end{itemize}
\end{lem}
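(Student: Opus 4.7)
The plan is to reduce the computation to the Cartesian power $X^k$, where intersection numbers become elementary combinatorics, by passing through the Hilbert--Chow morphism $\pi\colon \X\to X^{(k)}$ and the symmetrization map $q\colon X^k\to X^{(k)}$. The relevant identifications are standard: $h_k = \pi^{*}(h^{(k)})$, where $h^{(k)}$ is the class on $X^{(k)}$ characterized by $q^{*}(h^{(k)}) = \sum_{i=1}^k p_i^{*} h$, and $\pi$ is birational, so it preserves top intersection numbers between $\X$ and $X^{(k)}$.

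For the first identity, I would combine birationality of $\pi$ with the fact that $q$ is finite of degree $k!$ to write
\begin{equation*}
h_k^{2k} = (h^{(k)})^{2k}_{X^{(k)}} = \frac{1}{k!}\Bigl(\sum_{i=1}^k p_i^{*} h\Bigr)^{2k}_{X^k}.
\end{equation*}
Expanding via the multinomial theorem, every term has the form $\binom{2k}{n_1,\dots,n_k}\prod_i (p_i^{*} h)^{n_i}$. Since each factor $X$ is a surface, $(p_i^{*} h)^{n_i}$ vanishes as soon as $n_i\geqslant 3$, which forces $n_1=\cdots=n_k=2$. The unique surviving multinomial coefficient is $(2k)!/2^k$, so that
\begin{equation*}
h_k^{2k} = \frac{(2k)!}{k!\,2^k}\,(h^2)^k = \frac{(2k-1)!}{(k-1)!\,2^{k-1}}\,(h^2)^k,
\end{equation*}
and substituting $h^2=4k$ yields $\frac{(2k-1)!\,2^{k+1}}{(k-1)!}\,k^k$, which is manifestly positive.

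For the second identity, I would apply the projection formula. From $h_k^{2k-1} = \pi^{*}(h^{(k)})^{2k-1}$ one gets
\begin{equation*}
h_k^{2k-1}\cdot \delta = (h^{(k)})^{2k-1}\cdot \pi_{*}\delta.
\end{equation*}
The class $2\delta$ is represented by the exceptional divisor $E$ of $\pi$, a prime divisor of dimension $2k-1$ whose image is the big diagonal in $X^{(k)}$, a subvariety of dimension only $2k-2$. As $\pi|_E$ therefore has positive-dimensional fibers, $\pi_{*}\delta = 0$ as a cycle class, and the intersection vanishes.

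I do not foresee a serious obstacle here: the main technical point is to set up the identifications $h_k = \pi^{*}(h^{(k)})$ and $\pi_{*}\delta = 0$ carefully, after which both computations reduce to a multinomial expansion and a one-line projection-formula argument. A minor subtlety is that $X^{(k)}$ is singular along the big diagonal, so one should phrase the above either in terms of $\mathbb{Q}$-Cartier divisors or by pulling everything back to the smooth model $\X$; this is standard and does not affect the final numerical counts.
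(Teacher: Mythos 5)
Your proof is correct. Note that the paper does not prove this lemma at all: it simply cites \cite[Lemma 1.10]{wandel}, so there is no internal argument to compare against. Your computation --- pulling $h_k$ back through the Hilbert--Chow morphism and the degree-$k!$ quotient map $X^k \to X^{(k)}$, killing all multinomial terms with some $n_i \geqslant 3$ by dimension reasons to isolate the coefficient $(2k)!/(k!\,2^k) = (2k-1)!/((k-1)!\,2^{k-1})$, and using $\pi_{*}\delta = 0$ (the exceptional divisor contracts onto the $(2k-2)$-dimensional big diagonal) together with the projection formula for the second identity --- is exactly the standard argument underlying that citation, and all the arithmetic, including the substitution $h^2 = 4k$, checks out.
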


We also recall the notations introduced in \cite[\S 1]{stapleton}. The ample divisor $h$ on $X$ naturally induces an ample divisor
\begin{equation*}
 h_{X^k} =\bigoplus\limits_{i=1}^k q_i^\ast h
\end{equation*}
 on $X^k$, where $q_i$ denotes the projection from $X^k$ to the $i$-th factor, as well as a semi-ample divisor $h_k$ on $\X$. 
 
Moreover, we write $X^k_\circ$, $S^kX_\circ$ and $\X_\circ$ for the loci of the relevant spaces parametrizing distinct points. Then the natural map
\begin{equation*}
\overline{\sigma}_\circ: X^k_\circ \to \X_\circ
\end{equation*}
is an \'etale cover and $j: X^k_\circ \to X^k$ is an open embedding. For any coherent sheaf $F$ on $\X$, we denote by $F_\circ$ the restriction of $F$ to $\X_\circ$, and define
\begin{equation*}
(F)_{X^k} = j_\ast (\overline{\sigma}_\circ^\ast(F_\circ))
\end{equation*}
which is a torsion free coherent sheaf if $F$ is.

\begin{prop}
	\label{prop:stableK}
	The vector bundle $K$ defined in \eqref{eqn:K} is slope stable with respect to $h_k$.
\end{prop}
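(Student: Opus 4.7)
The plan is to apply Stapleton's technique \cite{stapleton}: reduce slope stability of $K$ on $\X$ with respect to the nef class $h_k$ to slope stability of an auxiliary sheaf on $X^k$ with respect to the ample class $h_{X^k}$. Applying the operation $(\cdot)_{X^k}$ to the exact sequence \eqref{eqn:sheafK} and using the standard identification $(\OO_X(1)^{[k]})_{X^k}\cong \bigoplus_{i=1}^k q_i^{\ast}\OO_X(1)$, we obtain on $X^k$ the exact sequence
\begin{equation*}
	0 \to \widetilde{K} \to V \otimes \OO_{X^k} \to \bigoplus_{i=1}^k q_i^{\ast}\OO_X(1) \to 0,
\end{equation*}
where $V := H^0(\OO_X(1))$ and $\widetilde{K} := (K)_{X^k}$; surjectivity of the right-hand evaluation is guaranteed by the $k$-very ampleness of $\OO_X(1)$. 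In particular $\widetilde{K}$ is a vector bundle of rank $k+2$ with $c_1(\widetilde{K}) = -h_{X^k}$ and $\mu_{h_{X^k}}(\widetilde{K}) = -h_{X^k}^{2k}/(k+2)$. By the slope-comparison formula of \cite[\S 1]{stapleton}, the slopes of a saturated subsheaf of $K$ with respect to $h_k$ and of its $(\cdot)_{X^k}$-transform with respect to $h_{X^k}$ agree up to a common positive constant depending only on the rank and on $k$, so it suffices to prove that $\widetilde{K}$ is $\mu_{h_{X^k}}$-slope stable on $X^k$.

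To rule out a destabilising saturated subsheaf $\widetilde{G} \subset \widetilde{K}$ of rank $0 < r < k+2$, the key observation is that $\widetilde{G}$ is simultaneously a subsheaf of the trivial bundle $V \otimes \OO_{X^k}$, so the dual determinant $\det(\widetilde{G})^{\vee}$ is globally generated and we may write $c_1(\widetilde{G}) = -D$ for some effective divisor class $D$ on $X^k$. The desired strict inequality $\mu_{h_{X^k}}(\widetilde{G}) < \mu_{h_{X^k}}(\widetilde{K})$ is then equivalent to $(k+2)(D \cdot h_{X^k}^{2k-1}) > r \cdot h_{X^k}^{2k}$.

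The main obstacle is producing this sharp lower bound on $D \cdot h_{X^k}^{2k-1}$, since the mere effectivity of $D$ only yields $D \cdot h_{X^k}^{2k-1} \geq 0$, which is far too weak. I would approach this by induction on $k$ via the recursive exact sequence
\begin{equation*}
	0 \to \widetilde{K}^{(k)} \to \pi^{\ast}\widetilde{K}^{(k-1)} \to q_k^{\ast}\OO_X(1) \to 0
\end{equation*}
on $X^k$, where $\pi:X^k \to X^{k-1}$ drops the last factor and $\widetilde{K}^{(m)}$ denotes the analogous kernel on $X^m$. The base case $m=1$ is the slope stability of the syzygy bundle $\widetilde{K}^{(1)} = \phi^{\ast}\Omega_{\PP V}(1)$ on $X$, where $\phi$ is the morphism defined by $|\OO_X(1)|$; this is classically known for sufficiently positive line bundles on K3 surfaces. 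In the inductive step, any destabilising $\widetilde{G}\subset \widetilde{K}^{(k)}$ is automatically a subsheaf of $\pi^{\ast}\widetilde{K}^{(k-1)}$, and comparing slopes with respect to $h_{X^k} = \pi^{\ast}h_{X^{k-1}} + q_k^{\ast}h$ while exploiting the $S_k$-symmetry of $\widetilde{K}$ should yield a contradiction to the inductive hypothesis. The subtle point, and where genuine work is required, is that stability of $\widetilde{K}^{(k-1)}$ with respect to $h_{X^{k-1}}$ controls only the $\pi^{\ast}h_{X^{k-1}}$ summand of the polarization, so the transverse contribution $q_k^{\ast}h$ must be handled by a separate intersection-theoretic argument on $X^k$.
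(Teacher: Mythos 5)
Your overall framework is the right one --- reduce to $X^k$ via $(\cdot)_{X^k}$ and Stapleton's slope comparison --- and this is indeed how the paper proceeds. But there are two problems, one small and one fatal to the argument as proposed. The small one: the evaluation $V\otimes\OO_{X^k}\to\bigoplus_{i=1}^k q_i^\ast\OO_X(1)$ is \emph{not} surjective on all of $X^k$. At a point with $x_i=x_j$ its image lies in the corresponding diagonal of the two summands, and $k$-very ampleness is irrelevant here: it controls the evaluation to $H^0(\OO_Z(1))$ for nonreduced length-$k$ schemes $Z$ (i.e.\ jets), not the evaluation to a direct sum of fibers at repeated points. So the sequence is only left exact, $\cok(\varphi)$ is supported on the big diagonal, and $(K)_{X^k}$ is reflexive but need not be locally free. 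This does not hurt your computation of $c_1((K)_{X^k})=-\sum_i q_i^\ast h$, since the diagonal has codimension $2$, but the claim as stated is false.

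The fatal problem is that the stability of $(K)_{X^k}$ is never actually established. You correctly identify that effectivity of $D=-c_1(\widetilde G)$ only gives $D\cdot h_{X^k}^{2k-1}\geqslant 0$, and that the case $D\cdot h_{X^k}^{2k-1}=0$ is the crux; but your proposed induction on $k$ leaves precisely the inductive step open (pullbacks $\pi^\ast\widetilde K^{(k-1)}$ are at best semistable for the product polarization, and the ``transverse contribution'' you defer is exactly where the argument must be made). Moreover you aim at genuine stability of $(K)_{X^k}$ on $X^k$, which is more than is needed: since any subsheaf $G\subseteq K$ yields the $\mathfrak{S}_k$-invariant subsheaf $(G)_{X^k}\subseteq (K)_{X^k}$, it suffices to rule out $\mathfrak{S}_k$-\emph{invariant} destabilizers, and invariance forces $c_1(F)=a\sum_i q_i^\ast h$ for some $a\in\ZZ$. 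This trichotomy is what makes the proof close: $a\leqslant -1$ and $a\geqslant 1$ are immediate slope comparisons, and the critical case $a=0$ is handled in the paper not by induction but by a global-sections argument --- one first shows $H^0((K)_{X^k})=0$ (because $H^0(\varphi)$ is injective: a nonzero section of $\OO_X(1)$ does not vanish at all $k$ points of some reduced tuple), and then shows that a maximal-slope stable reflexive subsheaf $F'$ of a slope-$0$ invariant $F$ must map isomorphically onto a trivial summand $\OO_{X^k}\subset V\otimes\OO_{X^k}$, producing a forbidden global section of $(K)_{X^k}$. Without this ingredient (or a completed induction), your argument does not rule out a rank-$r$ subsheaf with $c_1=0$, which is exactly the dangerous case.
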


\begin{proof}
	We follow the idea in the proof of \cite[Theorem 1.4]{stapleton}. 
	
	Since $(-)_\circ$ and $\overline{\sigma}_\circ^\ast(-)$ are exact, and $j_\ast(-)$ is left exact, by applying these functors to \eqref{eqn:sheafK} we obtain an exact sequence of $\mathfrak{S}_k$-invariant reflexive sheaves on $X^k$ as follows
	$$ 0 \longrightarrow (K)_{X^k} \longrightarrow (H^0(\OO_X(1))\otimes \OO_{\X})_{X^k} \stackrel{\varphi}{\longrightarrow} (\OO_X(1)^{[k]})_{X^k} $$
	where $\varphi$ is not necessarily surjective. It is clear that 
	$$ (H^0(\OO_X(1))\otimes \OO_{\X})_{X^k} = H^0(\OO_X(1)) \otimes \OO_{X^k}, $$
	and we also have
	$$ (\OO_X(1)^{[k]})_{X^k} = \bigoplus\limits_{i=1}^k q_i^\ast \OO_X(1) $$
	by \cite[Lemma 1.1]{stapleton}. Hence the above sequence becomes
	\begin{equation}
		\label{eqn:equisheaf}
		0 \longrightarrow (K)_{X^k} \longrightarrow H^0(\OO_X(1)) \otimes \OO_{X^k} \stackrel{\varphi}{\longrightarrow} \bigoplus\limits_{i=1}^k q_i^\ast \OO_X(1)
	\end{equation}
	where $\varphi$ is the evaluation map on $X^k_\circ$. 
	
	More precisely, for any set of closed points $(x_1,\ldots, x_n) \in X^k$ with $x_i \neq x_j$, the morphism of fibers can be identified as
	\begin{align*}
		\varphi_{(x_1,\ldots, x_k)}: H^0(\OO_X(1)) &\longrightarrow \bigoplus\limits_{i=1}^k \OO_X(1)_{x_i} \\
		s &\longmapsto (s(x_1),\ldots, s(x_k))
	\end{align*}
	Since for any non-trivial $s \in H^0(\OO_X(1))$, there are always (many choices of)  distinct points $(x_1,\ldots x_k) \in X^k$ such that $(s(x_1),\ldots, s(x_k)) \neq (0,\ldots, 0)$, we conclude that the map of global sections
	$$ H^0(\varphi): H^0(\OO_X(1)) \longrightarrow H^0(\bigoplus\limits_{i=1}^k q_i^\ast \OO_X(1)) $$
	is injective. It follows by exact sequence \eqref{eqn:equisheaf} that $(K)_{X^k}$ has no global sections, that is 
	\begin{equation}\label{noglobal}
	H^0((K)_{X^k}) = 0.
	\end{equation}
	
	Note that $\varphi$ is surjective on $X^k_\circ$, hence $\cok(\varphi)$ is supported on the big diagonal of $X^k$ which is of codimension $2$. It follows that
	$$ c_1((K)_{X^k}) = -\sum\limits_{i=1}^k q_i^\ast h. $$
	
	We claim that $(K)_{X^k}$ has no $\mathfrak{S}_k$-invariant subsheaf which is destabilizing with respect to $h_{X^k}$. Indeed, assume $F$ is an $\mathfrak{S}_k$-invariant subsheaf of $(K)_{X^k}$, then for some $a \in \ZZ$:
	\begin{equation*}
	c_1(F) = a(\sum\limits_{i=1}^k q_i^\ast h).
	\end{equation*}

	If $a \leqslant -1$, then 
	\begin{equation*}
	c_1(F) h_{X^k}^{2k-1} \leqslant c_1((K)_{X^k}) h_{X^k}^{2k-1} < 0
	\end{equation*}
	Since $1 \leqslant \rk(F) < \rk((K)_{X^k})$, it follows that $\mu_{h_{X^k}}(F) < \mu_{h_{X^k}}((K)_{X^k})$, hence $F$ is not destabilizing.
	
	If $a=0$, we choose a (not necessarily $\mathfrak{S}_k$-invariant) non-zero stable subsheaf $F' \subseteq F$ which has maximal slope with respect to $h_{X^k}$ (e.g. one can take a stable factor in the first Harder-Narasimhan factor of $F$). Without loss of generality, we can assume $F$ and $F'$ are both reflexive. Since $F'$ is also a subsheaf of $H^0(\OO_X(1))\otimes \OO_{X^k}$, there must be a projection from $H^0(\OO_X(1))\otimes \OO_{X^k}$ to a certain direct summand of it, such that the composition of the embedding and projection $F' \rightarrow H^0(\OO_X(1))\otimes \OO_{X^k} \rightarrow \OO_{X^k}$ is non-zero. Since $\mu_{X^k}(F') \geqslant \mu_{X^k}(F) = 0 = \mu_{X^k}(\OO_{X^k})$, and $\OO_{X^k}$ is also stable with respect to $h_{X^k}$, the map $F' \rightarrow \OO_{X^k}$ must be injective, and its cokernel is supported on a locus of codimension at least $2$. Since both are reflexive, we must have $F' = \OO_{X^k}$. Therefore $F$, and consequently $(K)_{X^k}$, have non-trivial global sections. This contradicts \eqref{noglobal}.
	
	If $a \geqslant 1$, $F$ would be a subsheaf of the trivial bundle $H^0(\OO_X(1)) \otimes \OO_{X^k}$ of positive slope. Contradiction.
	
	Finally, assume $G$ is a reflexive subsheaf of $K$. Then $(G)_{X^k}$ is an $\mathfrak{S}_k$-invariant reflexive subsheaf of $(K)_{X^k}$. By the above claim we have $\mu_{h_{X^k}}((G)_{X^k}) < \mu_{h_{X^k}}((K)_{X^k})$. It follows by \cite[Lemma 1.2]{stapleton} that $\mu_{h_k}(G) < \mu_{h_k}(K)$. Therefore $K$ is slope stable with respect to $h_k$, as desired.
\end{proof}

\begin{prop}
	\label{prop:stableEx}
	For any closed point $x\in X$, the bundle $E_x$ is slope stable with respect to $h_k$.
\end{prop}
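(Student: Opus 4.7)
The approach is to mimic the structure of the proof of Proposition \ref{prop:stableK}, exploiting the short exact sequence \eqref{defwrong} which embeds $E_x$ into $K$ as a subsheaf of corank $1$. First, I would apply the functor $(-)_{X^k}$ to \eqref{defwrong}; using the left exactness of $j_\ast$ together with the exactness of $(-)_\circ$ and $\overline{\sigma}_\circ^\ast$, this produces an injection $(E_x)_{X^k} \hookrightarrow (K)_{X^k}$, which composed with the inclusion from \eqref{eqn:equisheaf} embeds $(E_x)_{X^k}$ into the trivial bundle $H^0(\OO_X(1)) \otimes \OO_{X^k}$.

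Next I would compute the two essential invariants of $(E_x)_{X^k}$ needed to replay Stapleton's argument: its first Chern class and its global sections. For $c_1$, one observes that $\overline{\sigma}_\circ^{-1}(S_x \cap X^{[k]}_\circ) = \bigcup_{i=1}^k q_i^{-1}(x) \cap X^k_\circ$ has codimension $2$ in $X^k_\circ$, so the rank-$1$ torsion-free sheaf $(I_{S_x})_{X^k}$ has trivial first Chern class, giving $c_1((E_x)_{X^k}) = c_1((K)_{X^k}) = -\sum_{i=1}^k q_i^\ast h$. The embedding into $(K)_{X^k}$ combined with \eqref{noglobal} then yields $H^0((E_x)_{X^k}) = 0$. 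With these two facts in hand, I would run the three-case analysis of Proposition \ref{prop:stableK} on any $\mathfrak{S}_k$-invariant reflexive subsheaf $F$ of $(E_x)_{X^k}$ with $c_1(F) = a\sum q_i^\ast h$: the case $a \geq 1$ contradicts the slope-zero semistability of the ambient trivial bundle, $a = 0$ would force a non-zero global section of $(E_x)_{X^k}$ via the stable-factor argument, and $a \leq -1$ is resolved by a direct slope comparison against $\mu_{h_{X^k}}((E_x)_{X^k}) = -C/(k+1)$, where $C := (\sum q_i^\ast h) \cdot h_{X^k}^{2k-1} > 0$. Finally, descending via \cite[Lemma 1.2]{stapleton} should give the desired slope stability of $E_x$ with respect to $h_k$.

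The main obstacle is the slope comparison in the case $a \leq -1$: unlike in the proof for $K$, where having rank strictly less than $k+2$ left considerable room, here the rank of any proper subsheaf of $(E_x)_{X^k}$ is bounded only by $k$, while the target slope $-C/(k+1)$ is itself strictly more negative than that of $K$. I would need to verify that the chain
\begin{equation*}
\mu_{h_{X^k}}(F) \leq \frac{-C}{\rk F} \leq \frac{-C}{k} < \frac{-C}{k+1}
\end{equation*}
holds with a strict inequality at the end, which relies crucially on $\rk F \leq k < k+1 = \rk (E_x)_{X^k}$. In other words, I expect the argument to go through precisely because $E_x$ has corank $1$ in $K$; a larger gap in rank would leave no room for the strict comparison.
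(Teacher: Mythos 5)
Your argument is correct, but it takes a genuinely different and considerably longer route than the paper. The paper never returns to $X^k$ for this proposition: it works entirely on $X^{[k]}$, using Lemma \ref{lem:Chernclass} and Lemma \ref{intersect} to get $\mu_{h_k}(E_x)=-h_k^{2k}/(k+1)$ and $\mu_{h_k}(K)=-h_k^{2k}/(k+2)$, and then sandwiches a putative destabilizer $F\subseteq E_x\subseteq K$ between the two: the destabilizing hypothesis gives $\mu_{h_k}(E_x)\leqslant\mu_{h_k}(F)$, Proposition \ref{prop:stableK} gives $\mu_{h_k}(F)<\mu_{h_k}(K)$, and writing $c_1(F)=ah_k+b\delta$ these force $-\rk(F)/(k+1)\leqslant a<-\rk(F)/(k+2)$, an interval contained in $(-1,0)$ for $1\leqslant\rk(F)\leqslant k$, so no integer $a$ exists. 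Your proposal instead replays the whole descent to $X^k$ for $(E_x)_{X^k}$; the ingredients you list are all sound --- $c_1((I_{S_x})_{X^k})=0$ because the preimage of $S_x$ has codimension $2$ (the paper makes the identical computation in Proposition \ref{prop:stable}), $H^0((E_x)_{X^k})=0$ is inherited from \eqref{noglobal} via the left-exactness of the descent functor, and the three-case analysis closes, including the case $a\leqslant-1$ as you verify. One small correction: the strict inequality $-C/\rk(F)<-C/(k+1)$ only requires $\rk(F)<\rk(E_x)$, which holds for \emph{any} proper subsheaf, so the corank-one position of $E_x$ inside $K$ is not actually what saves you there; it is essential to the paper's integrality argument, not to yours. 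What the paper's route buys is brevity --- all equivariant work on $X^k$ is done once, for $K$, and the stability of $E_x$ becomes a purely numerical consequence on $X^{[k]}$; what your route buys is that you only need the weaker inputs \eqref{eqn:equisheaf} and \eqref{noglobal} (the embedding of $(K)_{X^k}$ into a trivial bundle and the vanishing of its global sections) rather than the full stability statement for $K$.
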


\begin{proof}
	By Lemma \ref{lem:Chernclass}, we have $c_1(E_x) = c_1(K) = -h_k+\delta$. Therefore by Lemma \ref{intersect}
	\begin{equation*}
c_1(E_x) h_k^{2k-1} = c_1(K) h_k^{2k-1} = (-h_k+\delta) h_k^{2k-1} = -h_k^{2k} < 0.
	\end{equation*}
	Assume $F$ is a destabilizing subsheaf of $E_x$ with $1\leqslant \rk(F)\leqslant k$ and $c_1(F) = ah_k + b\delta$ for some $a,b \in \ZZ$. Then
	$$ c_1(F) h_k^{2k-1} = ah_k^{2k}. $$
	By the assumption and Proposition \ref{prop:stableK}, we have the inequality
	\begin{equation*}
\mu_{h_k}(E_x) \leqslant \mu_{h_k}(F) < \mu_{h_k}(K),
	\end{equation*}
	  which can be written as 
	  \begin{equation*}
	     \frac{-h_k^{2k}}{k+1} \leqslant \frac{ah_k^{2k}}{\rk(F)} < \frac{-h_k^{2k}}{k+2} \Longleftrightarrow -\frac{\rk(F)}{k+1} \leqslant a < -\frac{\rk(F)}{k+2}\,\,\,\text{as $h_k^{2k}>0$.}
	  \end{equation*}
	  Such an integer $a$ cannot exist. Contradiction. Hence $E_x$ is stable with respect to $h_k$.
\end{proof}

\subsection{A smooth connected component}\label{sec:pnfunc}

In this section, we will interpret the universal sheaf $\mathcal{E}$ defined in \eqref{univ} as a family of stable sheaves on $X^{[k]}$ whose base is a smooth connected component of the corresponding moduli space. We have shown above that each wrong-way fiber $E_x$ of the family $\mathcal{E}$ is $\mu_{h_k}$-stable; however, it would be more preferable to establish the stability with respect to some ample class on $X^{[k]}$. Although the perturbation technique in \cite[Proposition 4.8]{stapleton} can be used to achieve this for every single $E_x$, for our purpose we will have to extend this technique to prove that all sheaves $E_x$ are slope stable with respect to the same ample class near $h_k$.

\begin{thm}\label{prop:sameH1}
	There exists some ample class $H \in \NS(\X)$ near $h_k$, such that $E_x$ is $\mu_H$-stable for all $x \in X$ simultaneously.
\end{thm}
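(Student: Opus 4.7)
The plan is to extend Stapleton's perturbation argument \cite[Proposition 4.8]{stapleton} from a single sheaf to the entire family $\{E_x\}_{x \in X}$ simultaneously, by supplementing it with a uniform boundedness statement. Proposition \ref{prop:stableEx} already provides $\mu_{h_k}$-stability of each $E_x$; the task is therefore to show that the open ample cone of $\X$ contains a class $H$ arbitrarily close to $h_k$ with respect to which every $E_x$ remains stable.

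The key point is that the collection $\{E_x : x \in X\}$ is a bounded family of sheaves on $\X$, because $\mathcal{E}$ is coherent on $X \times \X$ and $q$-flat over the projective base $X$. Fix an auxiliary ample class $H_0 \in \NS(\X)$ close to $h_k$ (for instance $H_0 = h_k - \epsilon_0 \delta$ for a small rational $\epsilon_0 > 0$, which lies in the open ample cone). Grothendieck's boundedness theorem (see e.g.\ \cite[Lemma 1.7.9]{huy}) applied to $\{E_x\}$ with the ample class $H_0$ ensures that, for any constant $C$, the set of saturated subsheaves $F \hookrightarrow E_x$ (over all $x \in X$) with $\mu_{H_0}(F) \geqslant \mu_{H_0}(E_x) - C$ is itself bounded. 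A standard continuity and compactness argument then shows that every $\mu_H$-destabilizer of any $E_x$, with $H$ ranging over a prescribed relatively compact neighborhood $K$ of $h_k$ in the ample cone, satisfies such a bound for a suitable $C = C(K)$. Consequently, the numerical invariants $(\rk(F), c_1(F)) \in \{1,\ldots,k\} \times (\ZZ h_k \oplus \ZZ \delta)$ of such destabilizers realize only finitely many values; call this finite set $\mathcal{S}$.

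By Lemma \ref{lem:Chernclass}, both $\rk(E_x) = k+1$ and $c_1(E_x) = -h_k + \delta$ are independent of $x$. Hence for each $(r, c) \in \mathcal{S}$ the expression
\begin{equation*}
f_{r,c}(H) = \frac{c \cdot H^{2k-1}}{r} - \frac{(-h_k+\delta) \cdot H^{2k-1}}{k+1}
\end{equation*}
is a polynomial function of $H \in N^1(\X)_\RR$, and by Proposition \ref{prop:stableEx} we have $f_{r,c}(h_k) < 0$. Continuity then yields an open neighborhood $V_{r,c}$ of $h_k$ on which $f_{r,c}$ remains strictly negative, and the finite intersection $V = \bigcap_{(r,c) \in \mathcal{S}} V_{r,c}$ is an open neighborhood of $h_k$ meeting the ample cone. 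Any ample class $H \in V \cap K$ then has the desired property: for every $x \in X$, no proper subsheaf of $E_x$ has $\mu_H$-slope $\geqslant \mu_H(E_x)$, so every $E_x$ is $\mu_H$-stable simultaneously.

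The main obstacle is the uniform boundedness step. Stapleton's original perturbation argument produces a suitable ample $H$ only for a single fixed sheaf, whereas here one needs a common $H$ working for all $x \in X$ at once; this forces the possible destabilizing numerical types, taken across all wrong-way fibers, to form a \emph{finite} set. This uniform finiteness is precisely what is gained by applying Grothendieck boundedness to the entire bounded family $\{E_x\}_{x \in X}$ in one go rather than to each fiber separately, after which the proof reduces to checking finitely many polynomial slope inequalities, each of which is open and strictly satisfied at $H = h_k$.
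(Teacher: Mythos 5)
Your overall skeleton --- reduce to a \emph{finite} set of numerical types $(\rk, c_1)$ of potential destabilizers, then observe that each of the finitely many slope inequalities is strict at $h_k$ and open in the polarization --- is the same as the paper's. But the step where you establish that finiteness has a genuine gap. You invoke Grothendieck's lemma correctly for a \emph{fixed} polarization $H_0$: saturated subsheaves with $\mu_{H_0}$-slope bounded \emph{below} form a bounded family. The problem is the bridge you assert as ``a standard continuity and compactness argument'': that any $F \subseteq E_x$ destabilizing with respect to \emph{some} $H$ in a compact set $K$ automatically has $\mu_{H_0}(F) \geqslant \mu_{H_0}(E_x) - C(K)$. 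Slopes of saturated subsheaves are not bounded below a priori (already $\OO(-n) \subset \OO^{\oplus 2}$ on a curve is saturated for every $n$), so this lower bound is exactly the non-trivial content: one must combine the single destabilizing inequality $\mu_H(F) \geqslant \mu_H(E_x)$ with $\mu_{\max}$-type upper bounds in enough independent directions of $\NS(\X)_\RR$ and use that $K$ is compactly contained in the ample cone, so that the resulting half-spaces cut out a bounded region of possible $c_1(F)$. This is precisely the locally-finite wall structure in higher dimensions, i.e.\ the content of \cite[Theorem 2.29]{greb16} and its use in \cite[Theorem 3.4]{greb16}; asserting it as routine assumes the crux of the theorem.

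The paper sidesteps the need for any family-boundedness argument with a cheaper observation: by \eqref{defwrong} every $E_x$ is a subsheaf of the single fixed bundle $K$ of \eqref{eqn:K}, so every subsheaf of every $E_x$ is a subsheaf of $K$, and the finiteness of the relevant set of first Chern classes follows by applying \cite[Theorem 2.29]{greb16} to $K$ alone, uniformly in $x$. It then runs the perturbation inside the Greb--Kebekus--Peternell framework (convexity and full-dimensionality of $\overline{U} = \cap_x \SStab(E_x)$, plus a hyperplane-separation argument to pass from semistability to stability on the interior), which also cleanly handles the fact that $h_k$ is only nef, not ample. If you replace your ``continuity and compactness'' sentence either by the $E_x \subseteq K$ trick or by an explicit appeal to \cite[Theorem 2.29]{greb16} for the bounded family, the rest of your argument goes through.
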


\begin{proof}
	Proposition \ref{prop:stableEx} and \cite[Theorem 2.3.1]{dCM} guarantees that the assumptions in \cite[Proposition 4.8]{stapleton} are satisfied for each $E_x$, hence every $E_x$ is slope stable with respect to some ample class near $h_k$ by \cite[Proposition 4.8]{stapleton}. In order to find a single ample class $H$ that is independent of the choice of $E_x$, we can literally use the entire proof of \cite[Proposition 4.8]{stapleton} except that we need to reconstruct the non-empty convex open set $U$ so that $\alpha := h_k^{2k-1}$ is in the closure of $U$, and for every $\gamma \in U$, $E_x$ is stable with respect to $\gamma$ for all $x \in X$.
	
	We follow the notations in \cite[Definition 3.1]{greb16}. For each $x \in X$, $\SStab(E_x)$ is a convex closed set containing $\alpha$. Hence the intersection 
	$$ \overline{U} := \cap_{x \in X} \SStab(E_x) $$
	is also a convex closed set containing $\alpha$. We first claim that \cite[Theorem 3.4]{greb16} holds for all $E_x$ simultaneously; namely, we will show that for any $\beta \in \mathrm{Mov}(\X)^\circ$ (see \cite[Definition 2.1]{greb16} for the notation), there exists a number $e \in \mathbb{Q}^+$, such that $(\alpha + \varepsilon \beta) \in \cap_{x \in X} \Stab(E_x)$ for any real $\varepsilon \in [0,e]$.
	
	To prove the claim, we first note that the slope $c := \mu_\beta(E_x)$ is independent of the choice of $x \in X$. We redefine the set $S$ in the proof of \cite[Theorem 3.4]{greb16} to be 
	$$ S := \{ c_1(F) \mid F \subseteq E_x \text{ for some } x \in X \text{ such that } \mu_\beta(F) \geqslant c \}. $$
	Since $E_x \subseteq K$ for all $x \in X$ by \eqref{defwrong}, we obtain that $S$ is a subset of
	$$ T := \{ c_1(F) \mid F \subseteq K \text{ such that } \mu_\beta(F) \geqslant c \}, $$
	which is finite by \cite[Theorem 2.29]{greb16}, hence $S$ is also finite. We can then use the rest of the proof of \cite[Theorem 3.4]{greb16} literally to conclude the claim.
	
	We then claim that $\overline{U}$ is of full dimension $r := \rk N_1(\X)$. If not, then we have $\alpha \in \overline{U} \subseteq L$ for some hyperplane $L \subset N_1(\X)_{\mathbb{R}}$. Since $\mathrm{Mov}(\X)$ is of full dimension, we can choose some $\beta \in \mathrm{Mov}(\X)^\circ \setminus L$. It follows that $(\alpha + \varepsilon \beta) \in \overline{U} \setminus L$ for some small $\varepsilon > 0$ by the previous claim and the choice of $\beta$. Contradiction.
	
	We define $U$ to be the interior of $\overline{U}$ and claim that $U$ is non-empty. Indeed, since $\overline{U}$ is of full dimension $r$, we can choose $r+1$ points of $\overline{U}$ in general positions, which form an $r$-simplex. By the convexity of $\overline{U}$, the entire simplex is in $\overline{U}$ hence any interior point of the simplex is also an interior point of $\overline{U}$. The convexity of $U$ follows from the convexity of $\overline{U}$. And it is clear from the construction that $\alpha = h_k^{2k-1}$ is in the closure of $U$. We finally claim that every $\gamma \in U$ is in $\cap_{x \in X} \Stab(E_x)$. If not, suppose that there exists some $\gamma_0 \in U$ and some $x_0 \in X$, such that $\gamma_0 \in \SStab(E_{x_0}) \setminus \Stab(E_{x_0})$; namely, $\mu_{\gamma_0}(F) = \mu_{\gamma_0}(E_{x_0})$ for some proper subsheaf $F$ of $E_{x_0}$. Since the slope function is linear with respect to the curve class, and $\mu_\alpha(F) < \mu_\alpha(E_{x_0})$ by Proposition \ref{prop:stableEx}, one can find a hyperplane in $N^1(\X)_\mathbb{R}$ through $\gamma_0$, such that $\mu_\gamma(E_{x_0}) - \mu_\gamma(F)$ takes opposite signs for $\gamma$ in the two open half-spaces separated by the hyperplane. In particular, $F$ destabilizes $E_{x_0}$ in one of the half-spaces. Since $U$ has non-empty intersection with both half-spaces, this contradicts the condition $U \subseteq \SStab(E_x)$. Therefore we have $U \subseteq \cap_{x\in X} \Stab(E_x)$, as desired.
\end{proof}

We give an alternative description of $E_x$ using the integral functor $\mathrm{\Phi}$ from Example \ref{pn-func}:

\begin{lem}\label{fmtrans}
	For each $x \in X$, let $I_x$ be the ideal sheaf of $x\in X$, then $E_x=\mathrm{\Phi}(I_x(1))$. 
\end{lem}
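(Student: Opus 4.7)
The plan is to apply the integral functor $\mathrm{\Phi}$ to the standard short exact sequence
\begin{equation*}
0 \longrightarrow I_x(1) \longrightarrow \OO_X(1) \longrightarrow \OO_x \longrightarrow 0
\end{equation*}
on $X$, producing a distinguished triangle
\begin{equation*}
\mathrm{\Phi}(I_x(1))\longrightarrow \mathrm{\Phi}(\OO_X(1))\longrightarrow \mathrm{\Phi}(\OO_x)\longrightarrow \mathrm{\Phi}(I_x(1))[1]
\end{equation*}
in $\De(\X)$, and then matching this triangle with the defining sequence \eqref{defwrong} of $E_x$.

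The first computation I would do is $\mathrm{\Phi}(\OO_X(1))\cong K$, concentrated in degree zero. This is an immediate application of cohomology and base change along $p$: the vanishings $H^1(X, I_Z(1))=H^2(X, I_Z(1))=0$ already established in the proof of Lemma \ref{ideal} imply that the higher $R^i p_{*}(\IZ\otimes q^{*}\OO_X(1))$ vanish, while the zeroth one equals $K$ by definition \eqref{eqn:K}. The second computation is $\mathrm{\Phi}(\OO_x)\cong I_{S_x}$. Writing $\iota\colon \X\cong \{x\}\times \X \hookrightarrow X\times \X$ for the closed embedding, we have $q^{*}\OO_x\cong \iota_{*}\OO_{\X}$, and combining $p\circ \iota=\id_{\X}$ with the projection formula yields $\mathrm{\Phi}(\OO_x)\simeq L\iota^{*}\IZ$. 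Applying $L\iota^{*}$ to the sequence $0\to \IZ\to \OO_{X\times \X}\to \OO_{\mathcal{Z}}\to 0$ reduces matters to showing $L\iota^{*}\OO_{\mathcal{Z}}\cong \OO_{S_x}$ in degree zero, which is a Tor-vanishing statement that follows from the fact that $\mathcal{Z}$ and $\{x\}\times \X$ are Cohen--Macaulay subschemes of codimension two meeting in the expected codimension four along $S_x$ inside the smooth ambient space $X\times \X$.

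Putting the pieces together, the triangle becomes
\begin{equation*}
\mathrm{\Phi}(I_x(1))\longrightarrow K\longrightarrow I_{S_x}\longrightarrow \mathrm{\Phi}(I_x(1))[1],
\end{equation*}
and comparison with \eqref{defwrong} gives $\mathrm{\Phi}(I_x(1))\cong E_x$, provided the middle arrow $K\to I_{S_x}$ matches the one in \eqref{defwrong}. Both maps are incarnations of ``evaluation at $x$'' (the one from the triangle is functorially induced by $\OO_X(1)\twoheadrightarrow \OO_x$, while the one from \eqref{defwrong} is the restriction to $\{x\}\times \X$ of the evaluation $p^{*}K\to \IZ\otimes q^{*}\OO_X(1)$ in \eqref{univ}), so the agreement is a diagram chase using the naturality of the base-change isomorphism.

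The main technical obstacle is the Tor-vanishing computation $L\iota^{*}\OO_{\mathcal{Z}}\cong \OO_{S_x}$, which requires verifying the proper intersection and Cohen--Macaulay hypotheses in detail; the identification of the two maps $K\to I_{S_x}$ at the final step, though expected by naturality, also deserves a careful check rather than being taken for granted.
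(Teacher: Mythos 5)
Your proposal is correct, and the first two computations ($\mathrm{\Phi}(\OO_X(1))\cong K$ via cohomology and base change, and $\mathrm{\Phi}(\OO_x)\cong I_{S_x}$) coincide with the paper's; the paper handles the second one in one line by citing the flatness of $\IZ$ over $X$ (established earlier via Krug--Rennemo), whereas your Tor-vanishing argument via proper intersection of Cohen--Macaulay subschemes is essentially a direct proof of the same fact and is fine. Where you genuinely diverge is the last step. The paper does \emph{not} attempt to match the arrow $K\to I_{S_x}$ from the triangle with the one in \eqref{defwrong} by a diagram chase; instead it invokes Remark \ref{pn-func2} (the $\PP^{k-1}$-functor property of $\mathrm{\Phi}$) to compute
\begin{equation*}
\Hom_{\X}(K,I_{S_x})\cong\Hom_X(\OO_X(1),\OO_x)\cong\CC,
\end{equation*}
so that any two nonzero maps $K\to I_{S_x}$ agree up to scalar and therefore have the same kernel; this makes the identification $E_x\cong\mathrm{\Phi}(I_x(1))$ automatic. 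Your route replaces this with the adjunction compatibility that $\iota^*$ of the counit $p^*p_*F\to F$ equals $p_*$ of the unit $F\to\iota_*\iota^*F$ under $p\circ\iota=\id$, which does hold and correctly identifies both maps as ``evaluation at $x$'' (up to the irrelevant scalar coming from trivializing $\OO_X(1)_x$). The trade-off: the paper's argument is shorter and reuses machinery it needs anyway for Theorem \ref{thm:component1}, while yours is more self-contained (it does not rely on Addington's theorem for this lemma) at the cost of the naturality check and the Tor computation, both of which you flag and both of which go through.
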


\begin{proof}
We start with the exact sequence
\begin{equation}\label{wrong2}
	\begin{tikzcd}
		0 \arrow[r] & E_x \arrow[r] & K \arrow[r] & I_{S_x} \arrow[r] & 0.
	\end{tikzcd}
\end{equation}
We note that $I_{S_x}=\mathrm{\Phi}(\OO_x)$ as $\IZ$ is flat over $X$. Furthermore we have $K=\mathrm{\Phi}(\OO_X(1))$ since $R^ip_{*}(\IZ\otimes q^{*}\OO_X(1))=0$ for $i=1,2$ as this is true for $H^i(I_Z(1))$ for any $[Z]\in \X$. These two facts imply that
\begin{equation*}
\Hom_{\X}(K,I_{S_x})=\Hom_{\X}(\mathrm{\Phi}(\OO_X(1)),\mathrm{\Phi}(\OO_x))\cong\Hom_X(\OO_X(1),\OO_x)\cong \mathbb{C}
\end{equation*}
by Remark \ref{pn-func2}. Thus the exact sequence \eqref{wrong2} is induced by the exact sequence
\begin{equation*}
	\begin{tikzcd}
		0 \arrow[r] & I_x(1) \arrow[r] & \OO_X(1) \arrow[r] & \OO_x \arrow[r] & 0.
	\end{tikzcd}
\end{equation*}
As $K\rightarrow I_{S_x}$ is surjective, applying $\mathrm{\Phi}$ to the last exact sequence shows $E_x=\mathrm{\Phi}(I_x(1))$.
\end{proof}

We return to the main result of the section. Let $H$ be an ample class that satisfies Theorem \ref{prop:sameH1}, and $\mathcal{M}$ the moduli space of $\mu_H$-stable sheaves on $\X$ with the same numerical invariants as $E_x$. Then the universal family $\mathcal{E}$ defines a classifying morphism
\begin{equation}\label{eqn:class}
f \colon X \longrightarrow \mathcal{M}, \quad x\longmapsto [E_x]
\end{equation}

In fact the morphism $f$ can be described as follows:

\begin{thm}\label{thm:component1}
	The classifying morphism \eqref{eqn:class} defined by the family $\mathcal{E}$ identifies $X$ with a smooth connected component of $\mathcal{M}$.
\end{thm}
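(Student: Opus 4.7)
The plan is to exploit the $\PP^{k-1}$-functor structure of $\mathrm{\Phi}$ (Example \ref{pn-func}) together with the identification $E_x = \mathrm{\Phi}(I_x(1))$ from Lemma \ref{fmtrans}, reducing everything to elementary facts about ideal sheaves of points on $X$. The morphism $f$ is well defined as a classifying morphism because Theorem \ref{prop:sameH1} guarantees that $\mathcal{E}$ is a flat family of $\mu_H$-stable sheaves on $\X$ parametrized by $X$.

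The first step is to show that $f$ is injective on closed points. For distinct $x,y\in X$, the $\PP^{k-1}$-functor formula in Remark \ref{pn-func2} in degree $0$ gives
\[ \Hom_{\X}(E_x,E_y)\cong \Hom_X(I_x(1),I_y(1))\otimes H^0(\PP^{k-1},\CC)=\Hom_X(I_x,I_y). \]
Any nonzero morphism $I_x\to I_y$, composed with the inclusion $I_y\hookrightarrow \OO_X$, lies in the one-dimensional space $\Hom_X(I_x,\OO_X)$ generated by the inclusion $I_x\hookrightarrow \OO_X$; for a nonzero scalar multiple of this inclusion to factor through $I_y$ would force $I_x\subseteq I_y$ as subsheaves of $\OO_X$, hence $x=y$. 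Therefore $\Hom_{\X}(E_x,E_y)=0$, and in particular $E_x\not\cong E_y$.

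The second step is to show that $df_x$ is an isomorphism at every $x\in X$. Applying Remark \ref{pn-func2} in degree $1$, and using that $\Ext^{<0}_X=0$ together with $H^{\mathrm{odd}}(\PP^{k-1},\CC)=0$, the only surviving summand is
\[ \Ext^1_{\X}(E_x,E_x)\cong \Ext^1_X(I_x(1),I_x(1))\cong \Ext^1_X(I_x,I_x)\cong T_xX, \]
where the final identification is the Kodaira--Spencer isomorphism for the Hilbert scheme $X^{[1]}=X$ of one point. Since $\mathcal{E}$ is constructed from $\IZ$ on $X\times \X$ (which, up to a twist, is the relative version of $I_x(1)$ over $X$) via the integral functor $\mathrm{\Phi}$, the Kodaira--Spencer map $df_x$ for $\mathcal{E}$ viewed as a family over $X$ coincides with the functorial map on $\Ext^1$ induced by $\mathrm{\Phi}$, which is precisely the inclusion of the leading summand in the above decomposition, hence an isomorphism.

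Combining the two steps, $f$ is unramified, injective on closed points, and has bijective differential at every point; as a morphism from the projective variety $X$ to the separated scheme $\mathcal{M}$, it is therefore an isomorphism onto both an open and a closed subscheme of $\mathcal{M}$. Since $X$ is smooth, connected, and of dimension $2$, this identifies $X$ with a smooth connected component of $\mathcal{M}$. The main technical subtlety I anticipate is justifying carefully that the Kodaira--Spencer map $df_x$ coincides with the map on $\Ext^1$ induced by $\mathrm{\Phi}$; this requires unwinding the construction of $\mathcal{E}$ and tracking how first-order deformations of $I_x(1)$ on $X$ are transported to first-order deformations of $E_x$ on $\X$ under the integral functor.
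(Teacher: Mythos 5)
Your two key computations --- $\Hom_{\X}(E_x,E_y)=0$ for $x\neq y$ and $\dim\Ext^1_{\X}(E_x,E_x)=2$ --- are exactly the ones the paper makes, via Lemma \ref{fmtrans} and the $\PP^{k-1}$-functor formula of Remark \ref{pn-func2}, so the core of your argument coincides with the paper's. Two remarks on where you deviate. First, for the vanishing of $\Hom_X(I_x(1),I_y(1))$ you give a direct elementary argument (compose with $I_y\hookrightarrow\OO_X$ and use $\Hom_X(I_x,\OO_X)\cong\CC$); the paper instead passes through the isomorphism $\Hom_X(I_x,I_y)\cong\Hom_X(\OO_x,\OO_y)$, again from the functor formula. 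Both are fine. Second, and more importantly, your endgame rests on an unproven claim: that the Kodaira--Spencer map $df_x\colon T_xX\to\Ext^1_{\X}(E_x,E_x)$ agrees with the map induced by $\mathrm{\Phi}$ and is therefore an isomorphism. You flag this yourself, and as written it is a gap --- checking compatibility of integral functors with Kodaira--Spencer maps takes genuine work. But it is also unnecessary: once $f$ is injective on closed points and $\dim T_{[E_x]}\mathcal{M}=\dim\Ext^1_{\X}(E_x,E_x)=2$, the image $f(X)$ is a $2$-dimensional closed subvariety along which $\mathcal{M}$ has $2$-dimensional Zariski tangent spaces, so $\mathcal{M}$ is smooth of dimension $2$ there, $f(X)$ is open and closed, and the bijective morphism from $X$ onto the normal variety $f(X)$ is an isomorphism. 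This is precisely the content of \cite[Lemma 1.6]{rz}, which the paper cites to conclude; replacing your differential argument by that citation (or by the dimension count just sketched) closes the gap with no new computation.
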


\begin{proof}
By \cite[Lemma 1.6]{rz} we have to prove that $f$ is injective on closed points and that for all $x\in X$ we have $\dim(T_{[E_x]}\mathcal{M})=2$ .

Now by Lemma \ref{fmtrans} we know $E_x=\mathrm{\Phi}(I_x(1))$, so for $x\neq y$ we find
\begin{align*}
\Hom_{\X}(E_x,E_y)&=\Hom_{\X}(\mathrm{\Phi}(I_x(1)),\mathrm{\Phi}(I_y(1)))\\
&\cong\Hom_X(I_x(1),I_y(1))\\
&\cong\Hom_X(\OO_x,\OO_y)=0
\end{align*}
by Remark \ref{pn-func2} again. This implies $f$ is injective on closed points.

A similar computation shows
\begin{align*}
\Ext^1_{\X}(E_x,E_x)&=\Ext^1_{\X}(\mathrm{\Phi}(I_x(1)),\mathrm{\Phi}(I_x(1)))\\
&\cong\Ext^1_X(I_x(1),I_x(1))\\
&\cong\Ext^1_X(\OO_x,\OO_x)\cong T_xX.
\end{align*}
Using $T_{[E_x]}\mathcal{M}\cong \Ext^1_{\X}(E_x,E_x)$ we thus find $\dim(T_{[E_x]}\mathcal{M})=2$ as desired.
\end{proof}

\section{K3 surfaces with Picard number two}\label{sect2}

In this section, we will consider a K3 surface $X$ of Picard number $2$, and construct a complete family of stable vector bundles on the Hilbert scheme $X^{[k]}$ for $k \geqslant 2$.

\subsection{The K3 surface}

In this section we assume $X$ is a K3 surface with
\begin{equation*}
\NS(X)=\mathbb{Z}e\oplus\mathbb{Z}f
\end{equation*}
such that $e^2=-2k$, $f^2=0$ and $ef=2k+1$ for some integer $k\geqslant 2$. The existence of such K3 surfaces is guaranteed by \cite[Corollary 14.3.1]{huy2}. Since $f^2=0$, either $f$ or $-f$ is effective. Without loss of generality, we will assume that the divisor class $f$ is effective, after possibly replacing the pair $(e,f)$ by $(-e,-f)$.

In this subsection we collect some helpful properties of $X$ which will be used in the construction of some moduli spaces of stable sheaves in the next section.

\begin{lem}\label{lem:positive}
We have $D^2\geqslant 0$ for all effective divisors on $X$. Especially there are no smooth curves $C$ on $X$ with $C\cong \PP^1$.
\end{lem}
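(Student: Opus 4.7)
My plan is to reduce both assertions to a single lattice statement: the lattice $\NS(X) = \ZZ e \oplus \ZZ f$ contains no class of self-intersection $-2$. Once this is established, both parts of the lemma fall out of general K3 geometry.

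The lattice statement amounts to a short Diophantine check. Writing $D = ae + bf$ with $a,b\in\ZZ$, one computes
\begin{equation*}
D^2 = -2ka^2 + 2(2k+1)ab = -2a\bigl(ka-(2k+1)b\bigr).
\end{equation*}
Setting $D^2 = -2$ gives $a\bigl(ka - (2k+1)b\bigr) = 1$, which forces $a = \pm 1$ and reduces the equation to $(2k+1)b = \pm(k-1)$. Since $0 < k-1 < 2k+1$ for $k \geqslant 2$, no integer $b$ can satisfy this, so no $(-2)$-class exists in $\NS(X)$.

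For the first claim I would then argue by contradiction. Assume $D$ is effective with $D^2 < 0$, and decompose $D = \sum n_i C_i$ into distinct irreducible reduced components with $n_i \geqslant 1$. From $D^2 = \sum n_i (D\cdot C_i) < 0$, some intersection $D\cdot C_j$ must be negative; combined with $C_j \cdot C_i \geqslant 0$ for $i\neq j$, this forces $C_j^2 < 0$. Adjunction on the K3 surface $X$ gives $C_j^2 = 2p_a(C_j) - 2 \geqslant -2$, and since the intersection form on $\NS(X)$ is even, we must have $C_j^2 = -2$. But $[C_j]$ is then a class in $\NS(X)$ of square $-2$, contradicting the previous paragraph. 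The ``especially'' clause is immediate, as any smooth rational curve on $X$ is effective with self-intersection $-2$ by adjunction, again contradicting the lattice statement.

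I do not anticipate any real obstacle here; the whole argument is a two-step reduction consisting of the elementary Diophantine check above together with the standard $(-2)$-curve criterion on K3 surfaces.
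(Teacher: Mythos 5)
Your proof is correct and follows essentially the same route as the paper: adjunction plus evenness of the intersection form reduces everything to the nonexistence of a $(-2)$-class, which is ruled out by the same Diophantine computation $a\bigl(ka-(2k+1)b\bigr)=1$. The only difference is that you spell out explicitly the reduction from an arbitrary effective divisor to its irreducible components, a step the paper leaves implicit.
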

\begin{proof}
Any irreducible curve $C$ on $S$ satisfies
\begin{equation*}
C^2=C(C+K_X)=2p_a(C)-2\geqslant -2.
\end{equation*}
So assume $C^2=-2$ and write $C=me+nf$. Then we have
\begin{align*}
C^2&=(me+nf)^2=m^2e^2+2mnef\\
&=-2km^2+2(2k+1)mn\\
&=-2m(km-(2k+1)n).
\end{align*}
The equation $C^2=-2$ translates into $m(km-(2k+1)n)=1$. This implies $m=\pm 1$ but then one can see that there is no $n\in \ZZ$ satisfying this equation.
\end{proof}

\begin{lem}\label{ample}
The divisor classes $h=e+(2k-1)f$ and $\widehat{h}=(2k)e+(2k-1)f$ are ample.
\end{lem}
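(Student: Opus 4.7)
The plan is to invoke the Nakai-Moishezon criterion on a surface: a class $D \in \NS(X)$ is ample if and only if $D^2 > 0$ and $D \cdot C > 0$ for every irreducible curve $C \subset X$. The self-intersections are immediate from the intersection matrix $e^2=-2k$, $f^2=0$, $ef=2k+1$: one finds $h^2 = 8k^2 - 2k - 2$ and $\widehat{h}^2 = 8k^3 - 4k$, both strictly positive for $k \geqslant 2$. So the real content will be showing $h \cdot C > 0$ and $\widehat{h} \cdot C > 0$ for every irreducible curve $C$ on $X$.

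Write $C = me + nf$. Lemma \ref{lem:positive} rules out smooth rational curves, so every irreducible $C$ satisfies $C^2 \geqslant 0$, which unpacks to $m(km - (2k+1)n) \leqslant 0$. To pin down the sign of $m$, I would exploit the effectivity of $f$: decomposing $f = \sum a_i C_i$ into distinct irreducible components with $a_i > 0$, each term $a_i(C \cdot C_i)$ is non-negative --- if $C \neq C_i$ by irreducibility, and if $C = C_i$ because $C^2 \geqslant 0$ from the no-$(-2)$-curve observation. Hence $C \cdot f = (2k+1)m \geqslant 0$, forcing $m \geqslant 0$.

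With $m \geqslant 0$ in hand, one checks $h$ and $\widehat{h}$ against $C$ directly. If $m = 0$ then $C$ is a positive multiple of $f$, so $h \cdot C$ and $\widehat{h} \cdot C$ are positive multiples of $2k+1$ and $4k^2 + 2k$ respectively. If $m > 0$, the constraint upgrades to $n \geqslant km/(2k+1)$; using $h \cdot e = 4k^2 - 2k - 1$, $h \cdot f = 2k+1$ and $\widehat{h} \cdot e = -1$, $\widehat{h} \cdot f = 4k^2 + 2k$, this yields the lower bounds $h \cdot C \geqslant m(4k^2 - k - 1)$ and $\widehat{h} \cdot C \geqslant m(2k^2 - 1)$, both strictly positive for $k \geqslant 2$.

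The main subtlety is the step establishing $m \geqslant 0$: a naive use of $C \cdot f \geqslant 0$ can fail when $C$ itself appears as a component of $f$, so the argument genuinely depends on the no-$(-2)$-curve input from Lemma \ref{lem:positive} to control the self-intersection in that case. Everything else reduces to bookkeeping with the intersection matrix.
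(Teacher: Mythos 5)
Your argument is correct, but it takes a genuinely different route from the paper. The paper's proof is a two-line computation: it checks $h^2=8k^2-2k-2>0$ and $hf=2k+1>0$ (and similarly for $\widehat{h}$) and then invokes the K3-specific description of the ample cone (the remark after \cite[Corollary 8.1.7]{huy2}), which says that on a K3 surface a class of positive square lying in the positive cone is ample as soon as it meets every smooth rational curve positively --- and Lemma \ref{lem:positive} guarantees there are no such curves at all. You instead run Nakai--Moishezon from scratch: you classify the possible classes $me+nf$ of irreducible curves, derive $m\geqslant 0$ from the effectivity of $f$ together with the no-$(-2)$-curve input, extract $n\geqslant km/(2k+1)$ from $C^2\geqslant 0$, and then verify $h\cdot C>0$ and $\widehat{h}\cdot C>0$ by direct estimates (all of which I checked: $h\cdot e=4k^2-2k-1$, $\widehat{h}\cdot e=-1$, $\widehat{h}^2=8k^3-4k$, and the lower bounds $m(4k^2-k-1)$ and $m(2k^2-1)$ are right). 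Your route is self-contained --- in effect you re-prove the inequality $0\leqslant m\leqslant\frac{2k+1}{k}n$ of Lemma \ref{eff} without the circularity of using $h$ ample, which is a genuine point since the paper's Lemma \ref{eff} does rely on Lemma \ref{ample} --- at the cost of more bookkeeping; the paper's route is shorter by outsourcing to the structure theory of the ample cone. The only step you gloss over is that $C=nf$ irreducible forces $n>0$, which needs a one-line remark (intersect with any ample class on the projective surface $X$, using that $f$ is effective and nonzero); this is minor.
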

\begin{proof}
We have 
\begin{align*}
h^2&=(e+(2k-1)f)^2=e^2+2(2k-1)ef\\
&=-2k+2(2k-1)(2k+1)=8k^2-2k-2.
\end{align*}
So $h^2>0$ as $k\geqslant 2$. Since also $hf=ef=2k+1>0$ we see that $h$ is ample by the remark after \cite[Corollary 8.1.7]{huy2}.

A similar computation shows $\widehat{h}^2>0$ and $\widehat{h}f>0$.
\end{proof}

\begin{lem}\label{eff}
Let $m$ and $n$ be integers. If the class $me+nf$ is effective, then $0\leqslant m \leqslant \frac{2k+1}{k}n$ (thus in particular $n \geqslant 0$). Furthermore $h(me+nf)\geqslant((2k-1)(2k+1)-k)m$.
\end{lem}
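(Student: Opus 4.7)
The plan is to extract the three claims from the three natural numerical invariants of $D = me+nf$: its pairing with the class $f$, with itself, and with the ample class $h$. Once $f$ is known to be nef, the lower bound $m \geq 0$ drops out of $f \cdot D \geq 0$; the upper bound $m \leq \frac{2k+1}{k}n$ drops out of the Hodge-type inequality $D^2 \geq 0$ furnished by Lemma \ref{lem:positive}; and the lower bound on $h \cdot D$ is a purely algebraic rearrangement of this second inequality.

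\textbf{Step 1: nefness of $f$.} The first (and only) step with genuine content is to show that $f$ is nef on $X$. Since $f^2 = 0$ and $f \cdot h = 2k+1 > 0$ with $h$ ample by Lemma \ref{ample}, the class $f$ lies on the boundary of the positive cone in the component containing the ample cone. Lemma \ref{lem:positive} rules out smooth rational curves on $X$, hence rules out $(-2)$-curves, so the nef cone of $X$ coincides with the closure of that positive cone and $f$ is nef. If one prefers a hands-on argument, for any irreducible curve $C = ae+bf$ the inequalities $C^2 \geq 0$ (Lemma \ref{lem:positive}) and $h \cdot C > 0$ can be checked to force $a \geq 0$, which then yields $f \cdot C = (2k+1)a \geq 0$.

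\textbf{Step 2: the two bounds on $m$.} The case $D = 0$ is immediate, so assume $D$ is nonzero and effective. Pairing $D$ with $f$ and with itself yields
\begin{align*}
f \cdot D &= (2k+1)m \geq 0, \\
D^2 &= 2m\bigl((2k+1)n - km\bigr) \geq 0.
\end{align*}
The first inequality gives $m \geq 0$. Combining with the second, either $m = 0$ (in which case everything is trivial) or $(2k+1)n \geq km$, i.e., $m \leq \frac{2k+1}{k}n$; in either case one also obtains $n \geq 0$.

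\textbf{Step 3: the $h$-estimate.} Expanding directly,
\begin{equation*}
h \cdot D = (e + (2k-1)f)(me + nf) = \bigl((2k-1)(2k+1) - 2k\bigr)m + (2k+1)n,
\end{equation*}
and the desired inequality $h \cdot D \geq \bigl((2k-1)(2k+1)-k\bigr)m$ rearranges to $(2k+1)n \geq km$, which is exactly the conclusion of Step 2. The main obstacle is therefore entirely in Step 1, namely the nefness of $f$; the rest is bookkeeping with the intersection form.
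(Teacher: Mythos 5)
Your proof is correct and rests on the same two ingredients as the paper's: the inequality $D^2\geqslant 0$ from Lemma \ref{lem:positive} and positivity against $h$, with your hands-on verification of the nefness of $f$ being literally the paper's computation on irreducible curves (the paper instead reduces to irreducible components by additivity and extracts $m\geqslant 0$ and $(2k+1)n\geqslant km$ simultaneously from $C^2\geqslant 0$ and $hC>0$). The only point you gloss over is the case $m=0$, where $n\geqslant 0$ is not quite automatic but follows in one line from $hD=(2k+1)n>0$ for $D\neq 0$ effective.
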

\begin{proof}
Let $D$ be an effective divisor with class $me+nf$. Since the claim is additive in $m$ and $n$, we may assume w.l.o.g. that $D$ is an irreducible curve $C$.

By Lemma \ref{lem:positive} we have $C^2\geqslant 0$. Therefore
\begin{align*}
C^2&=2m\left\lbrace -km+(2k+1)n\right\rbrace  \geqslant 0 \\
hC&=(4k^2-k-1)m+\left\lbrace -km+(2k+1)n \right\rbrace > 0 
\end{align*}
which implies $m\geqslant 0$ and $-km+(2k+1)n \geqslant 0$. The last inequality can also be read as
\begin{equation*}
(2k+1)n\geqslant km \Leftrightarrow m \leqslant \frac{2k+1}{k}n.
\end{equation*}
Putting everything together shows
\begin{equation*}
0 \leqslant m \leqslant \frac{2k+1}{k}n
\end{equation*}
as well as $hC\geqslant((2k-1)(2k+1)-k)m$.
\end{proof}

\begin{cor}\label{ellip}
There is a surjective morphism $\pi: X\rightarrow \PP^1$ such that all fibers are integral curves of arithmetic genus $p_a(C)=1$, that is $X$ is elliptically fibered.
\end{cor}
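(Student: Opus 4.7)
The plan is to construct $\pi$ as the morphism defined by the complete linear system $|f|$, and then to verify the fiber properties using the constraints from Lemma~\ref{eff}.

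First I would show that $f$ is nef. For any irreducible curve $C$ on $X$, by Lemma~\ref{eff} the class $[C] = me + nf$ satisfies $m \geqslant 0$, so $f \cdot C = (2k+1)m \geqslant 0$.

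Next I would establish that $|f|$ is a base-point free pencil defining a surjective morphism $\pi \colon X \to \PP^1$. Riemann-Roch on the K3 surface gives $\chi(\OO_X(f)) = 2 + \tfrac{1}{2} f^2 = 2$. Lemma~\ref{eff} also shows that $-f$ is not effective (its $f$-coefficient is $-1 < 0$), so Serre duality yields $H^2(X, \OO_X(f)) = H^0(X, \OO_X(-f))^{\vee} = 0$, and hence $h^0(\OO_X(f)) \geqslant 2$. By Lemma~\ref{lem:positive} the surface $X$ carries no $(-2)$-curves, so the classical theory of nef line bundles of square zero on a K3 surface (see e.g.\ Huybrechts, \emph{Lectures on K3 Surfaces}) implies that the nef, primitive, square-zero class $f$ is represented by a smooth elliptic curve, that $|f|$ is base-point free of projective dimension one, and that its associated morphism $\pi \colon X \to \PP^1$ is surjective.

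Finally I would verify that every fiber of $\pi$ is integral of arithmetic genus one. Each fiber $C \in |f|$ has $C^2 = f^2 = 0$ and $C \cdot K_X = 0$, so adjunction gives $p_a(C) = 1 + \tfrac{1}{2}(C^2 + C \cdot K_X) = 1$. For integrality, decompose $C = \sum_i m_i C_i$ with the $C_i$ distinct irreducible reduced curves and $m_i \geqslant 1$, and write $[C_i] = a_i e + b_i f$. Lemma~\ref{eff} forces $a_i, b_i \geqslant 0$ and $(a_i, b_i) \neq (0,0)$. From $\sum_i m_i a_i = 0$ we obtain $a_i = 0$ for all $i$, so $b_i \geqslant 1$, and then $\sum_i m_i b_i = 1$ forces a single summand with $m_1 = b_1 = 1$. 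Therefore $C = C_1$ is integral. The main obstacle is the invocation of the K3-specific classification result in the middle step; once that produces the fibration, the remaining verifications are direct numerical bookkeeping with Lemma~\ref{eff} and adjunction.
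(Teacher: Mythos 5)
Your proposal is correct and follows essentially the same route as the paper: both use the square-zero nef class $f$ and standard K3 theory to get the surjective morphism $\pi\colon X\to\PP^1$ from $|f|$, and both deduce integrality of the fibers from Lemma \ref{eff} (the paper phrases this as "$f$ cannot be the sum of two effective divisors", which is exactly your decomposition argument). You merely spell out in more detail the base-point-freeness of $|f|$, which the paper subsumes under "it is known".
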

\begin{proof}
Since $f^2=0$ it is known that the linear system $\left|f \right|$ induces a surjective map $\pi: X \rightarrow \PP^1$ with $\pi^{*}\OO_{\PP^1}(1)=\OO_X(f)$. By the previous lemma the class $f$ cannot be the sum of two effective divisors, hence all fibers $C$ of $\pi$ are integral and have $p_a(C)=1$. 
\end{proof}
\newpage

\begin{lem}\label{quot}
	Let $[Z]\in \Xk$. Assume $R$ is a torsion quotient of $I_Z(e)$ with $c_1(R)=nf$ for some $n\geqslant 0$, then $H^1(R)=0$.
\end{lem}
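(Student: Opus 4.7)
The plan is to push $R$ forward along the elliptic fibration $\pi\colon X \to \mathbb{P}^1$ from Corollary \ref{ellip} and compute $H^1(R)$ via the Leray spectral sequence. Since $R$ is torsion with $c_1(R) = nf$, its $1$-dimensional part will be supported on a divisor of class $nf$, i.e., a union of fibers of $\pi$; therefore $\pi_* R$ will be a torsion sheaf on $\mathbb{P}^1$, giving $H^1(\mathbb{P}^1, \pi_* R) = 0$. By the Leray spectral sequence, establishing $H^1(R) = 0$ then reduces to showing $R^1\pi_* R = 0$.

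To establish $R^1 \pi_* R = 0$, I would first prove the auxiliary vanishing $R^1\pi_* I_Z(e) = 0$. Each fiber $F_p$ of $\pi$ is an integral Gorenstein curve of arithmetic genus $1$ by Corollary \ref{ellip}, and $\mathcal{O}_X(e)|_{F_p}$ is a line bundle of degree $ef = 2k+1 > 0$ on it, so $H^1(F_p, \mathcal{O}_X(e)|_{F_p}) = 0$. Hence $R^1\pi_*\mathcal{O}_X(e) = 0$ and $\pi_*\mathcal{O}_X(e)$ is locally free of rank $2k+1$ by cohomology and base change. Applying $R\pi_*$ to $0 \to I_Z(e) \to \mathcal{O}_X(e) \to \mathcal{O}_Z(e) \to 0$ identifies $R^1\pi_* I_Z(e)$ with $\cok\!\bigl(\pi_*\mathcal{O}_X(e) \to \pi_*\mathcal{O}_Z(e)\bigr)$. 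By Nakayama's lemma, proving that this cokernel vanishes reduces to showing that the fiberwise restriction map
$$
H^0\bigl(F_p, \mathcal{O}_X(e)|_{F_p}\bigr) \longrightarrow H^0\bigl(Z_p, \mathcal{O}_X(e)|_{Z_p}\bigr)
$$
is surjective for every $p \in \mathbb{P}^1$, where $Z_p := Z \cap F_p$ is the scheme-theoretic intersection, of length $\ell_p \leqslant \text{length}(Z) = k$.

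This fiberwise surjectivity is the heart of the argument. It is equivalent to $H^1(F_p, \mathcal{O}_X(e)|_{F_p} \otimes I_{Z_p/F_p}) = 0$. By adjunction and $\omega_X = \mathcal{O}_X$, one has $\omega_{F_p} = \mathcal{O}_X(F_p)|_{F_p} = \pi^*\mathcal{O}_{\mathbb{P}^1}(1)|_{F_p} = \mathcal{O}_{F_p}$; so by Serre duality on the Gorenstein curve $F_p$, the desired vanishing becomes the nonexistence of nonzero morphisms $I_{Z_p/F_p} \to \mathcal{O}_X(-e)|_{F_p}$. Any such nonzero morphism between torsion-free rank-$1$ sheaves on the integral curve $F_p$ would be injective, and comparing Euler characteristics then forces $-\ell_p = \chi(I_{Z_p/F_p}) \leqslant \chi(\mathcal{O}_X(-e)|_{F_p}) = -(2k+1)$, contradicting $\ell_p \leqslant k < 2k+1$.

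Finally, applying $R\pi_*$ to the defining sequence $0 \to K \to I_Z(e) \to R \to 0$, with $K := \ker(I_Z(e) \twoheadrightarrow R)$, produces an exact sequence $R^1\pi_* I_Z(e) \to R^1\pi_* R \to R^2\pi_* K$, in which the leftmost term vanishes by the previous paragraph and the rightmost vanishes because $\pi$ has $1$-dimensional fibers. Hence $R^1\pi_* R = 0$, and combined with $H^1(\mathbb{P}^1, \pi_* R) = 0$ this yields $H^1(R) = 0$ via the Leray spectral sequence. The main obstacle is the fiberwise surjectivity, which crucially exploits both the strict positivity $ef = 2k+1 > k \geqslant \ell_p$ and the integrality of every fiber of $\pi$ (from Lemma \ref{eff}), so that Serre duality applies cleanly on the Gorenstein curve $F_p$ with trivial canonical.
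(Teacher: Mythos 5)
Your argument is correct, but it takes a genuinely different route from the paper's. The paper stays on $X$: it double-dualizes the kernel $K$ of $I_Z(e)\twoheadrightarrow R$ to get $K^{**}\cong\mathcal{O}_X(e-nf)$, realizes $R$ (up to finite-length sheaves) inside $\mathcal{O}_D(e)$ for an explicit divisor $D$ of class $nf$ supported on fibers, and then runs a snake-lemma and composition-series argument whose factors are kernels of surjections $\mathcal{O}_{C_i}(e)\twoheadrightarrow Q'$ with $Q'$ of length $\leqslant k$; the vanishing of $H^1$ of these rank-one torsion-free sheaves of degree $\geqslant k+1$ on the integral genus-one fibers is quoted from D'Souza. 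You instead work relatively over $\mathbb{P}^1$: Leray reduces the lemma to $H^1(\mathbb{P}^1,\pi_*R)=0$ (where $c_1(R)=nf$ together with Lemma \ref{eff} and the integrality of fibers from Corollary \ref{ellip} forces $\pi_*R$ to be torsion) and to $R^1\pi_*R=0$, which you deduce from the stronger statement $R^1\pi_*I_Z(e)=0$ via $R^2\pi_*K=0$. Your key step --- surjectivity of $H^0(F_p,\mathcal{O}_X(e)|_{F_p})\to H^0(Z_p,\mathcal{O}_X(e)|_{Z_p})$, proved by Serre duality on the Gorenstein fiber with trivial dualizing sheaf and the count $\ell_p\leqslant k<2k+1=ef$ --- uses exactly the same numerical input as the paper, and is in effect a self-contained proof of the D'Souza vanishing that the paper cites. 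What your approach buys is a cleaner intermediate statement, $R^1\pi_*I_Z(e)=0$, which disposes of all torsion quotients with $c_1=nf$ at once and avoids the diagram chase; the price is some care with cohomology and base change (to identify the fibers of $\pi_*\mathcal{O}_X(e)$) and with Nakayama applied to the cokernel of $\pi_*\mathcal{O}_X(e)\to\pi_*\mathcal{O}_Z(e)$, both of which you handle correctly. The only point I would spell out more is the very first one: that the one-dimensional part of $\supp(R)$ consists of fibers requires writing each irreducible component as $a_ie+b_if$ and invoking Lemma \ref{eff} to kill the $e$-components, then Corollary \ref{ellip} to identify curves with $C\cdot f=0$ as whole fibers.
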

\begin{proof}
	The quotient defines the following exact sequence:
	\begin{equation*}
		\begin{tikzcd}
			0 \arrow[r] &  K \arrow[r] & I_Z(e) \arrow[r] & R \arrow[r] & 0.
		\end{tikzcd}
	\end{equation*}
	Now $K$ is torsion free of rank one, so its double dual $K^{**}$ is locally free of rank one and the natural map
	$K \to K^{**}$ is injective and the cokernel $T$ has finite support. Especially $c_1(T) = 0$ so
	\begin{equation*}
		c_1(K^{**}) = c_1(K) = c_1( I_Z(e)) - c_1( R) = e - n f
	\end{equation*}
	and thus $K^{**} \cong \mathcal{O}_X( e - n f)$. The embedding $K \hookrightarrow I_Z(e)$ induces an embedding
	\begin{equation*}
		K^{**} \cong \mathcal{O}_X( e - n f) \hookrightarrow \mathcal{O}_X(e).
	\end{equation*}
	This embedding is given by a global section of $\mathcal{O}_X( n f)$, that is by an effective divisor
	$D = \sum_i a_i C_i$ with class $n f$.
	
	This global section is the pullback along the elliptic fibration $\pi$ of a global section of
	$\mathcal{O}_{\mathbb{P}^1}( n)$, with corresponding  effective divisor $\sum_i a_i z_i$ on $\mathbb{P}^1$, here $C_i = \pi^{-1}( z_i)$.
	
	Denote by $D \subset X$ also the corresponding closed subscheme (which maybe non-reduced, if $a_i \geqslant 2$ for some $i$). We get the commutative diagram 
	\begin{equation*}
		\begin{tikzcd}
			& 0 \arrow{d} & 0 \arrow{d} & & \\
			0 \arrow{r} & K \arrow{r}\arrow{d} & \mathcal{O}_X( e - n f) \arrow{r}\arrow{d} & T \arrow{r}\arrow[d,"\alpha"] & 0\\
			0 \arrow{r} & I_Z(e) \arrow{r}\arrow{d} & \mathcal{O}_X( e) \arrow{r}\arrow{d} & \OO_Z \arrow{r} & 0\\
			& R \arrow[r,"\beta"] \arrow{d} & \mathcal{O}_D( e) \arrow{d} & &\\
			& 0  & 0  & &
		\end{tikzcd}
	\end{equation*}
	The snake lemma gives an exact sequence
	\begin{equation*}
		\begin{tikzcd}
			0 \arrow[r] & \ker( \alpha) \arrow[r] & R \arrow[r,"\beta"] & \mathcal{O}_D( e) \arrow[r] & \mathrm{coker}( \alpha) \arrow[r] & 0.
		\end{tikzcd}
	\end{equation*}
	Let $R' \subset \mathcal{O}_D( e)$ be the image of $\beta$. Since the torsion sheaf $\mathcal{O}_{\sum_i a_i z_i}$ on $\mathbb{P}^1$
	has a composition series by skyscraper sheaves $\mathcal{O}_{z_i}$ as composition factors,
	$\mathcal{O}_D$ has a composition series with composition factors $\mathcal{O}_{C_i}$,
	thus $\mathcal{O}_D( e)$ has a composition series with composition factors $\mathcal{O}_{C_i}( e)$.
	The latter is a line bundle of degree
	\begin{equation*}
		e\cdot C_i = e f = 2k+1
	\end{equation*}
	on $C_i$. The quotient $\mathcal{O}_D( e)/R'$ is isomorphic to $\mathrm{coker}(\alpha)$, that is to a quotient $Q$ of $\OO_Z$.
	By intersecting with $R'$ we get a composition series for $R'$ with composition factors which are kernels of a surjection $\mathcal{O}_{C_i}( e) \twoheadrightarrow Q'$ with $Q'$ of length $\leqslant k$. Thus we have exact sequences:
	\begin{equation*}
		\begin{tikzcd}
			0 \arrow[r] & L \arrow[r] & \OO_{C_i}(e) \arrow[r] & Q' \arrow[r] & 0,
		\end{tikzcd}
	\end{equation*}
	with a torsion free sheaf $L$ of rank one on the integral projective curve $C_i$ of arithmetic genus one. Using $\chi(\OO_{C_i})=0$ and  
	 \begin{equation*}
	 	\chi(L)=\chi(\OO_{C_i}(e))-\chi(Q')\geqslant k+1, 
	 \end{equation*}
	gives 
	\begin{equation*}
		\dg(\OO_{C_i}(e))\geqslant \dg(L)\geqslant k+1.
	\end{equation*}
	By \cite[Proposition 4.6.]{sou} all of these composition factors have trivial $H^1$. By constructing short exact sequences out of the composition series and using the induced exact sequences for $H^1$, it follows
	\begin{equation*}
		H^1(R') = 0.
	\end{equation*}
	As $\ker( \beta) = \ker( \alpha) \subseteq T$ has finite support, we also have $H^1( \ker( \beta)) = 0$. Hence
	\begin{equation*}
		H^1(R) = 0.\qedhere
	\end{equation*}
\end{proof}

\subsection{The construction of a universal family}

In this subsection we want to generalize \cite[Theorem 1.2]{muk}. Let $h$ be the ample line bundle defined in Lemma \ref{ample}, and for any integer $k \geqslant 2$ let $v=(2k-1,h,2k) \in H^\ast_{\mathrm{alg}}(X, \mathbb{Z})$. We immediately have the following result:

\begin{lem}\label{lem:standardstuff}
The moduli space $M_h(v)$ of $\mu_h$-stable sheaves on $X$ with Mukai vector $v$ is a smooth projective variety of dimension $2k$ and a fine moduli space, and every point $[E] \in M_h(v)$ represents a locally free sheaf.	
\end{lem}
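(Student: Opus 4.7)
My plan is to follow the template of the analogous lemma at the beginning of Section \ref{sect1}, adapted to the Picard rank $2$ setting. Three of the four assertions are nearly verbatim analogues of arguments already used there; the only step that needs a small numerical twist is the implication ``semistable implies stable'', which I view as the main (if modest) obstacle here.

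For smoothness, I would first check that every $\mu_h$-semistable sheaf with Mukai vector $v$ is $\mu_h$-stable, after which Mukai's theorem delivers smoothness of the expected dimension. Unlike the Picard rank $1$ case, this does not follow from $\rho(X)=1$; instead, I would argue that any proper subsheaf $F$ with $1 \leqslant \rk(F) \leqslant 2k-2$ realizing $\mu_h(F) = \mu_h(E)$ forces $(2k-1)(h \cdot c_1(F)) = \rk(F) \cdot h^2$, and then observe using Lemma \ref{ample} that $h^2 \equiv -1 \pmod{2k-1}$, so $\gcd(2k-1, h^2) = 1$. This divisibility makes such an $F$ impossible.

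The dimension and fineness assertions reduce to the numerical identities $v^2 = h^2 - 2(2k-1)(2k) = 2k-2$ and $\gcd(2k-1, 2k) = 1$, the latter invoking \cite[Remark 4.6.8]{huy}.

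For local freeness, I would reuse the minimality-of-$c_2$ trick from Section \ref{sect1}. Setting $v' = (2k-1, h, a)$ with $a \geqslant 2k+1$, the worst case $a = 2k+1$ already gives $v'^2 + 2 \leqslant 2 - 2k \leqslant -2$ for $k \geqslant 2$, so no $\mu_h$-stable sheaf with Mukai vector $v'$ can exist (any such sheaf is simple, so its tangent space has dimension $v'^2 + 2 \geqslant 0$). If a stable $E \in M_h(v)$ failed to be locally free, then $E^{**}$ would still be $\mu_h$-stable -- any subsheaf of $E^{**}$ restricts to a subsheaf of $E$ of the same rank and first Chern class, hence the same slope -- and would have Mukai vector $(2k-1, h, 2k + \ell)$ for some $\ell \geqslant 1$ equal to the length of $E^{**}/E$, contradicting the previous sentence.
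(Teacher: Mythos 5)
Your proposal is correct and follows essentially the same route as the paper: the paper likewise deduces "semistable $\Rightarrow$ stable" from $\gcd(2k-1,h^2)=1$ (citing \cite[Lemma 1.2.7]{huy}, whose proof is exactly your divisibility argument), computes $\dim = v^2+2 = 2k$, gets local freeness from $v'^2+2 \leqslant 2-2k < 0$ for $v'=(2k-1,h,a)$ with $a \geqslant 2k+1$ (you merely spell out the implicit double-dual step), and gets fineness from a gcd condition. The only cosmetic difference is that you invoke $\gcd(2k-1,2k)=1$ for fineness where the paper uses $\gcd(2k-1,h^2)=1$; both suffice for the criterion of \cite[Remark 4.6.8]{huy}.
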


\begin{proof}
We first observe by \cite[Lemma 1.2.7]{huy} that every $\mu_h$-semistable sheaf $E$ with Mukai vector $v(E)=v$ is $\mu_h$-stable since $\gcd(2k-1,h^2)=1$. Thus $M_h(v)$ is a smooth projective variety. We compute:
\begin{equation*}
	\dim(M_h(v))=v^2+2=(8k^2-2k-2)-2(2k-1)(2k)+2=2k.
\end{equation*}
Furthermore $v'=(2k-1,h,a)$ with $a\geqslant 2k+1$ satisfies
\begin{equation*}
	v'^2+2=h^2-2a(2k-1)+2\leqslant(8k^2-2k-2)-2(2k-1)(2k+1)+2= 2-2k < 0,
\end{equation*}
so again every point $[E]$ in $M_h(v)$ is locally free. The condition $\gcd(2k-1,h^2)=1$ also implies that $M_h(v)$ is a fine moduli space.
\end{proof}

\medskip

In the following discussion, we will explicitly construct a universal family for the moduli space $M_h(v)$. We first define some integral functors. For any line bundle $L$ on $X$, we define
$$ M_L: \De(X) \longrightarrow \De(X); \quad (-) \longmapsto (-) \otimes L. $$
Then we consider the composition
\begin{equation}\label{eqn:Theta}
	\mathrm{\Theta} \coloneqq M_{\mathcal{O}_X(f)} \circ T_{\mathcal{O}_X}^{-1} \circ M_{\mathcal{O}_X(e)}: \De(X) \longrightarrow \De(X),
\end{equation}
where $T_{\mathcal{O}_X}^{-1}$ is the inverse of the spherical twist induced by $\mathcal{O}_X$. It is clear that $\mathrm{\Theta}$ is an autoequivalence of $\De(X)$ hence a Fourier-Mukai transform. We denote the corresponding kernel by $\mathcal{P} \in \De(X \times X)$. By Remark \ref{computetwist}, we have an explicit description of $\mathcal{P}$ by the exact triangle
\begin{equation}\label{eqn:Ptriangle}
	\mathcal{P} \longrightarrow \mathrm{\Delta}_\ast \OO_X(e+f) \longrightarrow \OO_X(e) \boxtimes \OO_X(f)[2] \longrightarrow \mathcal{P}[1],
\end{equation}
where $\mathrm{\Delta}: X \hookrightarrow X \times X$ is the diagonal embedding. The kernel $\mathcal{P}$ also defines a Fourier-Mukai transform in the opposite direction, which we denote by 
$$ \widehat{\mathrm{\Theta}}: \De(X) \longrightarrow \De(X). $$
Since the kernel of each composition factor in \eqref{eqn:Theta}, viewed as an object in $\De(X \times X)$, remains the same under the permutation of the two copies of $X$, it follows that
\begin{equation}\label{eqn:Thetahat}
	\widehat{\mathrm{\Theta}} = M_{\mathcal{O}_X(e)} \circ T_{\mathcal{O}_X}^{-1} \circ M_{\mathcal{O}_X(f)}.
\end{equation}
For any $[Z] \in X^{[k]}$, we apply $\mathrm{\Theta}$ on the ideal sheaf $I_Z$ and define
\begin{equation*}
	E_Z \coloneqq \mathrm{\Theta}(I_Z).
\end{equation*}
A priori $E_Z$ is a derived object on $X$, but we can show the following:

\begin{thm}\label{thm:2ndModuli}
	$E_Z$ is $\mu_h$-stable locally free sheaf with Mukai vector $v(E_Z) = (2k-1, h, 2k)$.
\end{thm}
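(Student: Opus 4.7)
The plan is to unpack $E_Z$ as an extension of coherent sheaves, read off its Mukai vector from that extension, and then verify $\mu_h$-stability by a case analysis on potential destabilizing subsheaves.

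First, applying the distinguished triangle of Remark \ref{computetwist} with $\mathcal{S}=\mathcal{O}_X$ and $\mathcal{G}=I_Z(e)$ gives
\begin{equation*}
T_{\mathcal{O}_X}^{-1}(I_Z(e)) \longrightarrow I_Z(e) \longrightarrow R\Gamma(I_Z(e))\otimes \mathcal{O}_X[2] \longrightarrow T_{\mathcal{O}_X}^{-1}(I_Z(e))[1].
\end{equation*}
The ideal sheaf sequence $0\to I_Z(e)\to \mathcal{O}_X(e)\to \mathcal{O}_Z(e)\to 0$ together with Lemma \ref{eff} (which excludes both $e$ and $-e$ from being effective) yields $H^0(I_Z(e))=H^2(I_Z(e))=0$ and hence $h^1(I_Z(e))=2k-2$ by Riemann--Roch. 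The long exact sequence of cohomology sheaves of the triangle then collapses to
\begin{equation*}
0 \longrightarrow \mathcal{O}_X^{\oplus 2k-2} \longrightarrow T_{\mathcal{O}_X}^{-1}(I_Z(e)) \longrightarrow I_Z(e) \longrightarrow 0,
\end{equation*}
showing in particular that $T_{\mathcal{O}_X}^{-1}(I_Z(e))$ is a coherent sheaf concentrated in degree $0$. Tensoring with $\mathcal{O}_X(f)$ gives
\begin{equation*}
0 \longrightarrow \mathcal{O}_X(f)^{\oplus 2k-2} \longrightarrow E_Z \longrightarrow I_Z(e+f) \longrightarrow 0,
\end{equation*}
so $E_Z$ is a coherent sheaf, and additivity of Mukai vectors in this sequence delivers $v(E_Z)=(2k-1,\, e+(2k-1)f,\, 2k)=(2k-1, h, 2k)$.

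Next, a short divisibility calculation shows $\gcd(2k-1, h^2)=1$, so Lemma \ref{lem:standardstuff} reduces everything to $\mu_h$-semistability: both $\mu_h$-stability and local freeness will follow automatically. Suppose for contradiction that $F \subset E_Z$ is a saturated subsheaf of rank $r$ with $\mu_h(F) > \mu_h(E_Z) = h^2/(2k-1)$. Let $G = F \cap \mathcal{O}_X(f)^{\oplus 2k-2}$ and let $R$ be the image of $F$ in $I_Z(e+f)$. If $R=0$ then $F = G \subseteq \mathcal{O}_X(f)^{\oplus 2k-2}$, which is $\mu_h$-polystable of slope $f\cdot h = 2k+1$, so $\mu_h(F) \leqslant 2k+1$; the inequality $(2k+1)(2k-1) = 4k^2 - 1 < 8k^2 - 2k - 2 = h^2$ then contradicts destabilization. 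If $\rk(R) = 1$, the saturation of $R$ has double dual $\mathcal{O}_X(D)$ with $D = ae+bf$ such that $e+f-D$ is effective, so Lemma \ref{eff} confines $(a,b)$ to a small explicit set; combining this with the bound $c_1(G)\cdot h \leqslant (r-1)(2k+1)$ (valid since $G$ has rank $r-1$ inside $\mathcal{O}_X(f)^{\oplus 2k-2}$) pins down $c_1(F)\cdot h$ into narrow ranges that must be ruled out against $\mu_h(F) > \mu_h(E_Z)$.

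The main obstacle is precisely this borderline analysis in the case $\rk(R) = 1$, and I expect Lemma \ref{quot} to be essential there: after twisting by $\mathcal{O}_X(-f)$, the torsion quotient $I_Z(e+f)/R$ becomes a torsion quotient of $I_Z(e)$ whose first Chern class has the form $nf$ with $n\geqslant 0$ under the constraints coming from Lemma \ref{eff}. The vanishing $H^1=0$ supplied by Lemma \ref{quot} then delivers the cohomological control needed to exclude the last destabilizing configurations and complete the semistability argument.
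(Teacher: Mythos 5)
Your setup agrees with the paper's: the same extension $0 \to \OO_X(f)^{\oplus (2k-2)} \to E_Z \to I_Z(e+f) \to 0$ obtained from the triangle for $T^{-1}_{\OO_X}$ and the vanishing $h^0(I_Z(e))=h^2(I_Z(e))=0$, the same Mukai vector computation, and the reduction of stability and local freeness to $\mu_h$-semistability via $\gcd(2k-1,h^2)=1$ and Lemma \ref{lem:standardstuff} is legitimate (the paper works with torsion-free quotients of $E_Z(-f)$ rather than subsheaves of $E_Z$, which is only a cosmetic difference). The case $R=0$ is also handled correctly. The problem is that essentially all of the difficulty of the theorem sits in the case $\rk(R)=1$, and there your argument is not a proof but a statement of intent. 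As you note yourself, the slope bounds $c_1(G)\cdot h \leqslant (r-1)(2k+1)$ and $D \preceq e+f$ do \emph{not} produce a contradiction: for instance $D=e+f$ together with $\mu_h(G)=2k+1$ gives $c_1(F)\cdot h = (2k+1)(r+2k-2) > r\,h^2/(2k-1)$ for every $1\leqslant r\leqslant 2k-2$, so such an $F$ would genuinely destabilize and cannot be excluded on numerical grounds. This borderline configuration is exactly where the content of the theorem lies.

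Moreover, Lemma \ref{quot} alone cannot close this case. In the paper's proof the decisive input is the vanishing $\Hom(E_Z,\OO_X(f)) = \Hom(I_Z,\OO_X(-e)[-1]) = 0$, which comes from the fact that $\mathrm{\Theta}$ is an equivalence and $\OO_X(f)=\mathrm{\Theta}(\OO_X(-e)[-1])$; Lemma \ref{quot} (via Serre duality, giving $\Ext^1(F_1,\OO_X)=0$ for the torsion piece) only serves to \emph{transfer} this vanishing to the rank-$r$ piece coming from the trivial summands, yielding $\Hom(F_0,\OO_X)=0$. One then still needs a separate section-counting step: a generically surjective map $\OO_X^{\oplus(\rk F_0+1)} \to F_0$ produces $\det(F_0)^{-1}\hookrightarrow \OO_X^{\oplus(\rk F_0+1)}$ with torsion-free quotient, and Serre duality together with $H^2(F_0)=0$ gives $h^0(\det F_0)\geqslant \rk(F_0)+1$, hence $n\geqslant \rk(F_0)$ and the required slope inequality. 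Neither the $\Hom$-vanishing coming from the equivalence $\mathrm{\Theta}$ nor this section count appears in your sketch, so the proposal has a genuine gap precisely at the step that makes the theorem nontrivial.
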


\begin{proof}
First of all, by \eqref{eqn:Theta} and \cite[Lemma 8.12]{huy3}, a standard computation of the cohomological Fourier-Mukai transform shows that $$v(E_Z) = (2k-1, h, 2k).$$ Moreover, by \eqref{eqn:Ptriangle} and the fact that $T^{-1}_{\OO_X}(\OO_X) = \OO_X[1]$, we obtain an exact triangle
\begin{equation}\label{eqn:EZtriangle}
	E_Z \longrightarrow I_Z(e+f) \longrightarrow H^\ast(I_Z(e)) \otimes \OO_X(f)[2] \longrightarrow E_Z[1].
\end{equation}
In order to compute $H^\ast(I_Z(e))$, we observe by Lemma \ref{eff} that
\begin{equation}\label{eqn:vanishinge}
	h^0(\OO_X(e)) = 0 \quad \text{and} \quad h^2(\OO_X(e)) = h^0(\OO_X(-e)) = 0.
\end{equation}
It follows by a long exact sequence of cohomology groups that
$$ h^0(I_Z(e)) = h^2(I_Z(e)) = 0. $$
Therefore the exact triangle \eqref{eqn:EZtriangle} reduces to the short exact sequence
\begin{equation}\label{exseq}
	0 \longrightarrow H^1(I_Z(e)) \otimes \OO_X(f) \longrightarrow E_Z \longrightarrow I_Z(e+f) \longrightarrow 0,
\end{equation}
where $\dim H^1(I_Z(e)) = \rk(E_Z)-1 = 2k-2$. For the convenience of analyzing the stability of $E_Z$, we rewrite the above exact triangle as
\begin{equation*}\label{exseq2}
	0 \longrightarrow \OO_X^{\oplus (2k-2)} \longrightarrow E_Z(-f) \longrightarrow I_Z(e) \longrightarrow 0.
\end{equation*}
Furthermore, we observe that $\OO_X(f) = \mathrm{\Theta}(\OO_X(-e)[-1])$. Since $\mathrm{\Theta}$ is an equivalence, we have
$$ \Hom(E_Z(-f), \OO_X) = \Hom(E_Z, \OO_X(f)) = \Hom(I_Z, \OO_X(-e)[-1]) = 0. $$

We are now ready to prove that $E_Z$, or rather $E_Z(-f)$, is $\mu_h$-stable. We first have
	\begin{equation*}
		\mu_h(E_Z(-f))=\frac{eh}{2k-1}=\frac{-2k+(2k-1)(2k+1)}{2k-1}=2k+1-\frac{2k}{2k-1}>0.
	\end{equation*}

	\medskip	
	Pick a torsion free quotient  $F$ of $E_Z(-f)$ with $1\leqslant\rk(F)\leqslant 2k-2$. We have
	\begin{equation*}
		\begin{tikzcd}
			E_Z(-f) \arrow{r} & F \arrow{r} & 0
		\end{tikzcd}
	\end{equation*}
	with $\Hom(F,\mathcal{O}_X)\hookrightarrow \Hom(E_Z(-f),\mathcal{O}_X)=0$.
	
	We want to show that we always have $\mu_h(F)>\mu_h(E_Z(-f))$. For this, define the torsion free sheaf $F_0$ as the image of the composition
	\begin{equation*}
		\begin{tikzcd}
			\mathcal{O}_X^{\oplus (2k-2)} \arrow[hookrightarrow]{r} & E_Z(-f) \arrow{r} & F.
		\end{tikzcd}
	\end{equation*}
	We get a surjection
	\begin{equation*}
		\begin{tikzcd}
			\mathcal{O}_X^{\oplus (2k-2)} \arrow{r} & F_0 \arrow{r} & 0.
		\end{tikzcd}
	\end{equation*}
	This implies that $c_1(F_0)$ is effective and we have the following commutative diagram:
	\begin{equation}\label{big}
		\begin{tikzcd}
			& 0 \arrow{d} & 0 \arrow{d} & 0 \arrow{d} & \\
			0 \arrow{r} & K_0 \arrow{r}\arrow{d} & K_1 \arrow{r}\arrow{d} & K_2 \arrow{r}\arrow{d} & 0\\
			0 \arrow{r} & \mathcal{O}_X^{\oplus (2k-2)} \arrow{r}\arrow{d} & E_Z(-f) \arrow{r}\arrow{d} & I_Z(e) \arrow{r}\arrow{d} & 0\\
			0 \arrow{r} & F_0 \arrow{r}\arrow{d} & F \arrow{r}\arrow{d} & F_1 \arrow{r}\arrow{d} & 0\\
			& 0  & 0  & 0  &
		\end{tikzcd}
	\end{equation}
	
	\medskip
	Due to the diagram $\rk(F_1)\in\left\lbrace 0,1\right\rbrace$.
	\medskip
	
	Case 1: $\rk(F_1)=1$. Then $\rk(F_0)=\rk(F)-1$ and $F_1\cong I_Z(e)$. We conclude
	\begin{equation*}
		c_1(F)=c_1(F_0)+c_1(I_Z(e)) \Rightarrow c_1(F)=c_1(F_0)+e.
	\end{equation*}
	
	Using this we find:
	\begin{equation*}
		\mu_h(F)=\frac{c_1(F)h}{\rk(F)}=\underbrace{\frac{c_1(F_0)h}{\rk(F)}}_{\geqslant 0}+\frac{eh}{\rk(F)}>\frac{eh}{2k-1}=\mu_h(E_Z(-f)).
	\end{equation*}
	
	So we indeed have $\mu_h(F)>\mu_h(E_x(-f))$.
	\medskip
	
	Case 2: $\rk(F_1)=0$. Now $\rk(F_0)=\rk(F)$. Write $c_1(F)=me+nf$. Since $c_1(F_0)$ and $c_1(F_1)$ are effective, so is their sum $c_1( F)$, which by Lemma \ref{eff} implies, that $m\geqslant 0$ as well as 
	\begin{equation*}
		\mu_h(F)=\frac{(me+nf)h}{\rk(F)}\geqslant\frac{m((2k-1)(2k+1)-k)}{\rk(F)}\geqslant m(2k+1-\frac{k}{2k-1}).
	\end{equation*}
	For $m\geqslant 1$ we have
	\begin{align*}
		\mu_h(F) &\geqslant m(2k+1-\frac{k}{2k-1})\\
		&\geqslant 2k+1-\frac{k}{2k-1}\\ 
		&> 2k+1-\frac{2k}{2k-1}=\mu_h(E_Z(-f)) 
	\end{align*}
	\medskip
	
	So only the case $m=0$ remains, i.e. $c_1(F)=nf$. We have
	\begin{equation*}
		\mu_h(F)=\frac{n(2k+1)}{\rk(F)}.
	\end{equation*}
	If we can prove $n\geqslant \rk(F)$ we are done since then
	\begin{equation*}
		\mu_h(F)\geqslant 2k+1 > 2k+1-\frac{2k}{2k-1} =\mu_h(E_Z(-f)).
	\end{equation*}
	
	As $c_1(F)=nf$ is the sum of the two effective divisors $c_1(F_0)$ and $c_1(F_1)$, it follows from Lemma \ref{eff} that $c_1( F_0) = n_0 f$ and $c_1( F_1) = n_1f$ with $n_0, n_1 \geqslant 0$ and $n_0 + n_1 = n$.
	
	By Lemma \ref{quot} we have $H^1(F_1) = 0$ which implies $\Ext^1( F_1, \mathcal{O}_X) = 0$ using Serre duality. So the restriction map
	\begin{equation*}
		\Hom( F, \mathcal{O}_X) \to \Hom( F_0, \mathcal{O}_X)
	\end{equation*}
	surjective. But we know $\Hom( F, \mathcal{O}_X) = 0$. So
	\begin{equation} \label{eq:F_0}
		\Hom( F_0, \mathcal{O}_X) = 0.
	\end{equation}
	
	Using the elliptic fibration $\pi:X\rightarrow \PP^1$ we have:
	\begin{equation}\label{det}
		h^0(\det(F_0))=h^0(\mathcal{O}_X(n_0f))=n_0+1.
	\end{equation}
   Now there is a trivial sub-bundle in $\mathcal{O}_X^{\oplus (2k-2)}$ of rank $\rk(F)+1$ such that
	\begin{equation*}
		\begin{tikzcd}
			\mathcal{O}_X^{\oplus (\rk(F)+1)} \arrow[r,"\varphi"] & F_0 
		\end{tikzcd}
	\end{equation*}
	is surjective outside a finite subset of $X$ by \cite[Lemma 4.60]{bar}.
	\medskip
	
	Define $R:=\cok(\varphi)$. Then there is the exact sequence:
	\begin{equation*}
		\begin{tikzcd}
			0 \arrow{r} & F_0' \arrow{r} & F_0 \arrow{r} & R \arrow{r} & 0.
		\end{tikzcd}
	\end{equation*}
	As $R$ has finite support, we get: 
	\begin{center}
		$\det(F_0)=\det(F_0')$ as well as $H^2(F_0')\cong H^2(F_0)$.
	\end{center}
	
	We also have the exact sequence
	\begin{equation*}
		\begin{tikzcd}
			0 \arrow{r} & \det(F_0)^{-1} \arrow{r} & \mathcal{O}_X^{\oplus(\rk(F)+1)} \arrow{r} & F_0' \arrow{r}  & 0.
		\end{tikzcd}
	\end{equation*}
	The end of the induced long cohomology sequence gives:
	\begin{equation}\label{les}
		\begin{tikzcd}
			H^1(F_0') \arrow{r} & H^2(\det(F_0)^{-1}) \arrow{r} & H^2(\mathcal{O}_X^{\oplus (\rk(F)+1)}) \arrow{r} & H^2(F_0') \arrow{r}  & 0.
		\end{tikzcd}
	\end{equation}
	
	It follows from \eqref{eq:F_0} by Serre duality that
	\begin{equation*}
		H^2(F_0')\cong H^2(F_0)\cong \Hom(F_0,\mathcal{O}_X)^\vee =0.
	\end{equation*}
	
	Since $H^2(F_0')=0$, we apply Serre duality again and obtain from \eqref{les} that
	\begin{equation*}
		\begin{tikzcd}
			0 \arrow{r} & H^0(\mathcal{O}_X^{\oplus (\rk(F)+1)}) \arrow{r} & H^0(\det(F_0)).
		\end{tikzcd}
	\end{equation*}
	We conclude
	\begin{equation*}
		h^0(\det(F_0))\geqslant \rk(F)+1.
	\end{equation*}
	Using this inequality together with \eqref{det} we get:
	\begin{equation*}
		n_0+1=h^0(\det(F_0))\geqslant \rk(F)+1 \Rightarrow n_0 \geqslant \rk(F) \Rightarrow n \geqslant \rk(F).
	\end{equation*}
	We obtain the desired inequality between $n$ and $\rk(F)$, hence $E_Z(-f)$ is stable, and so is $E_Z$. It then follows by Lemma \ref{lem:standardstuff} that $E_Z$ is locally free.
\end{proof}

We want to globalize the previous construction. For this we denote the universal closed subscheme of length $n$ by $\mathcal{Z}\subset X\times \Xk$, and the universal ideal sheaf by $\mathcal{I}_\mathcal{Z}$. As a kernel, $\mathcal{I}_\mathcal{Z}$ induces a pair of integral functors (in opposite directions):
\begin{equation*}
	\mathrm{\Phi}: \De(X) \longrightarrow \De(X^{[k]}) \,\,\,\,\text{and}\,\,\,\,\widehat{\mathrm{\Phi}}: \De(X^{[k]}) \longrightarrow \De(X).
\end{equation*}
Here $\mathrm{\Phi}$ is a $\mathbb{P}^{k-1}$-functor, see Example \ref{pn-func}.

The composition of the integral functors
$$ \mathrm{\Theta} \circ \widehat{\mathrm{\Phi}}: \De(X^{[k]}) \longrightarrow \De(X) $$
is still an integral functor, whose kernel $\mathcal{E} \in \De(X^{[k]} \times X)$ can be computed from $\mathcal{P}$ and $\mathcal{I}_\mathcal{Z}$ explicitly. More precisely, let $\pi_{12}$, $\pi_{23}$ and $\pi_{13}$ be projections from $X^{[k]} \times X \times X$ to each pair of factors, then
$$ \mathcal{E} = R{\pi_{13}}_\ast(\pi_{12}^\ast\mathcal{I}_\mathcal{Z}\otimes\pi_{23}^\ast\mathcal{P}); $$
see \cite[Proposition 5.10]{huy3}. We have the following property about $\mathcal{E}$:

\begin{prop}\label{prop:universal-family}
	$\mathcal{E}$ is a locally free sheaf on $X^{[k]} \times X$ such that $\mathcal{E}|_{\{[Z]\} \times X} \cong E_Z$ for any $[Z] \in X^{[k]}$.
\end{prop}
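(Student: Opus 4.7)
The argument splits naturally into two parts: first, identifying for each closed point $[Z]\in X^{[k]}$ the derived restriction $\mathcal{E}|_{\{[Z]\}\times X}$ with the sheaf $E_Z$ analyzed in Theorem~\ref{thm:2ndModuli}; and second, promoting this fiberwise identification to the statement that $\mathcal{E}$ itself is a locally free sheaf on $X^{[k]}\times X$.

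For the fiber identification, I would invoke the standard principle that if $\mathcal{K}\in\De(Y\times X)$ is the kernel of an integral transform $\Psi_{\mathcal{K}}:\De(Y)\to\De(X)$, then for any closed point $y\in Y$ flat base change along $\{y\}\hookrightarrow Y$ yields $\mathcal{K}|_{\{y\}\times X}\simeq\Psi_{\mathcal{K}}(\OO_y)$. Applied to $\widehat{\mathrm{\Phi}}$ with kernel $\mathcal{I}_{\mathcal{Z}}$, this gives $\widehat{\mathrm{\Phi}}(\OO_{[Z]})\simeq I_Z$; applied to $\mathcal{E}$, the kernel of $\mathrm{\Theta}\circ\widehat{\mathrm{\Phi}}$, it gives
\[
\mathcal{E}|_{\{[Z]\}\times X} \;\simeq\; (\mathrm{\Theta}\circ\widehat{\mathrm{\Phi}})(\OO_{[Z]}) \;\simeq\; \mathrm{\Theta}(I_Z) \;=\; E_Z
\]
in $\De(X)$.

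For the second part, Theorem~\ref{thm:2ndModuli} asserts that each $E_Z$ is a locally free sheaf on $X$ of constant rank $2k-1$ concentrated in cohomological degree zero. Hence the derived restriction $L(i_{[Z]}\times\mathrm{id}_X)^\ast\mathcal{E}$ has cohomology only in degree zero for every $[Z]\in X^{[k]}$. A cohomology-and-base-change argument then produces a coherent sheaf $\mathcal{E}'$ on $X^{[k]}\times X$, flat over $X^{[k]}$, whose formation commutes with base change; comparison of fibers identifies $\mathcal{E}'$ with $\mathcal{E}$ and places the latter in cohomological degree zero. Combined with the local freeness of the fibers $E_Z$ on $X$, \cite[Lemma 2.1.7]{huy} then gives that $\mathcal{E}$ is locally free on $X^{[k]}\times X$.

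The main technical obstacle I anticipate is making the cohomology-and-base-change step rigorous. A concrete way to do this is to substitute the defining triangle \eqref{eqn:Ptriangle} for $\mathcal{P}$ into the expression $\mathcal{E}=R{\pi_{13}}_\ast(\pi_{12}^\ast\mathcal{I}_\mathcal{Z}\otimes\pi_{23}^\ast\mathcal{P})$ and apply $R{\pi_{13}}_\ast$ term by term. The vanishings $h^0(I_Z(e))=h^2(I_Z(e))=0$ established uniformly in $[Z]$ inside the proof of Theorem~\ref{thm:2ndModuli} (via \eqref{eqn:vanishinge}) imply by base change that, writing $p,q$ for the projections from $X^{[k]}\times X$, the direct image $p_\ast(\mathcal{I}_\mathcal{Z}\otimes q^\ast\OO_X(e))$ vanishes and that $R^1p_\ast(\mathcal{I}_\mathcal{Z}\otimes q^\ast\OO_X(e))$ is a locally free sheaf of rank $2k-2$ on $X^{[k]}$ whose formation commutes with base change. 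This realizes $\mathcal{E}$ globally as an extension whose fiberwise form is precisely \eqref{exseq}, with both outer terms flat over $X^{[k]}$, so that $\mathcal{E}$ is $p$-flat, and local freeness then follows from \cite[Lemma 2.1.7]{huy}.
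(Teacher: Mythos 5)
Your proposal is correct, and its first half coincides with the paper's argument: both identify the derived fiber $\mathcal{E}|_{\{[Z]\}\times X}$ with $(\mathrm{\Theta}\circ\widehat{\mathrm{\Phi}})(\OO_{[Z]})\cong\mathrm{\Theta}(I_Z)=E_Z$ and then invoke \cite[Lemma 2.1.7]{huy} at the end. Where you diverge is in how you pass from ``every derived fiber is a sheaf'' to ``$\mathcal{E}$ is a sheaf flat over $X^{[k]}$'': the paper disposes of this in one stroke by citing \cite[Lemma 3.31]{huy3}, the general statement that an object of $\De(Y\times X)$ whose derived restrictions to all fibers $\{y\}\times X$ are sheaves is itself a sheaf flat over $Y$. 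You instead make the flatness explicit by substituting the triangle \eqref{eqn:Ptriangle} into $\mathcal{E}=R{\pi_{13}}_\ast(\pi_{12}^\ast\mathcal{I}_\mathcal{Z}\otimes\pi_{23}^\ast\mathcal{P})$, using the uniform vanishings $h^0(I_Z(e))=h^2(I_Z(e))=0$ and constancy of $h^1(I_Z(e))=2k-2$ to show that $Rp_\ast(\mathcal{I}_\mathcal{Z}\otimes q^\ast\OO_X(e))$ is a shifted locally free sheaf, and realizing $\mathcal{E}$ as an extension of $\mathcal{I}_\mathcal{Z}\otimes q^\ast\OO_X(e+f)$ by a $p$-flat locally free sheaf, whence $p$-flatness. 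This is exactly the global extension the paper records separately in Remark \ref{rmk:correctfiber} (where it is noted to be ``not strictly required''), so your route is sound and self-contained; what it buys is independence from the black-box lemma at the cost of redoing the base-change bookkeeping, while the paper's citation is shorter but defers the content to \cite{huy3}. The only soft spot is your middle paragraph introducing an auxiliary sheaf $\mathcal{E}'$ and ``comparison of fibers'': as stated that step is vague, but it is superseded by the concrete computation in your final paragraph, so nothing essential is missing.
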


\begin{proof}
	For any $[Z] \in X^{[k]}$, the derived pullback of $\mathcal{E}$ to the fiber $\{[Z]\} \times X$ can be computed by
	$$ (\mathrm{\Theta} \circ \widehat{\mathrm{\Phi}}) (\mathcal{O}_{[Z]}) \cong \mathrm{\Theta}(I_Z) = E_Z, $$
	which is a locally free sheaf by Theorem \ref{thm:2ndModuli}. It follows that $\mathcal{E}$ is a sheaf which is flat over $X^{[k]}$ by \cite[Lemma 3.31]{huy3}, and locally free by \cite[Lemma 2.1.7]{huy}.
\end{proof}

In fact, $\mathcal{E}$ is a universal family for the fine moduli space $M_h(v)$. More precisely, we have

\begin{cor}
	The family $\mathcal{E}$ induces an isomorphism $\Xk \cong M_h(v)$.
\end{cor}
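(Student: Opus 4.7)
The plan is to follow closely the strategy used in the proof of Theorem \ref{thm:1stModuli}. By Proposition \ref{prop:universal-family} together with the fine moduli property of $M_h(v)$ established in Lemma \ref{lem:standardstuff}, the family $\mathcal{E}$ induces a classifying morphism
$$\varphi: \Xk \longrightarrow M_h(v), \quad [Z] \longmapsto [E_Z].$$
The two steps I would carry out are: first, show that $\varphi$ is injective on closed points; second, deduce that $\varphi$ is an isomorphism by a dimension and irreducibility argument.

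For the first step, by its definition \eqref{eqn:Theta} the functor $\mathrm{\Theta}$ is the composition of two line-bundle twists with the inverse spherical twist $T_{\OO_X}^{-1}$, and the latter is an autoequivalence of $\De(X)$ by Proposition \ref{twistequi}. Hence $\mathrm{\Theta}$ itself is an autoequivalence. Consequently, if $[Z_1] \neq [Z_2]$ in $\Xk$, then $I_{Z_1} \not\cong I_{Z_2}$, and applying $\mathrm{\Theta}$ gives $E_{Z_1} = \mathrm{\Theta}(I_{Z_1}) \not\cong \mathrm{\Theta}(I_{Z_2}) = E_{Z_2}$, so $\varphi$ is injective on closed points.

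For the second step, both $\Xk$ and $M_h(v)$ are smooth of dimension $2k$ (the latter by Lemma \ref{lem:standardstuff}). A morphism between smooth varieties of the same dimension that is injective on closed points is an open embedding, and since $\Xk$ is projective and irreducible, the image $\varphi(\Xk)$ is then both open and closed in $M_h(v)$. Invoking the irreducibility of $M_h(v)$ forces $\varphi(\Xk) = M_h(v)$, and hence $\varphi$ is the desired isomorphism.

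The only non-formal ingredient, which I expect to be the main (but very mild) obstacle, is the irreducibility of $M_h(v)$. This is however a classical result: since the Mukai vector $v = (2k-1, h, 2k)$ is primitive and the smooth moduli space is non-empty, Yoshioka's theory shows that $M_h(v)$ is deformation equivalent to a Hilbert scheme of points on a K3 surface and in particular irreducible. With this input, the entire argument is a direct transcription of the proof of Theorem \ref{thm:1stModuli}.
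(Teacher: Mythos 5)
Your proposal is correct and follows essentially the same route as the paper: injectivity on closed points via the equivalence $\mathrm{\Theta}$, then an open embedding between smooth $2k$-dimensional varieties, with properness of $\Xk$ and irreducibility of both spaces forcing surjectivity. The only difference is that you make explicit the justification for the irreducibility of $M_h(v)$ (via Yoshioka), which the paper uses implicitly.
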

\begin{proof}
	$\mathcal{E}$ induces a classifying morphism
	\begin{equation*}
		\varphi: \Xk \longrightarrow M_h(v); \quad [Z] \longmapsto \left[ E_Z \right].
	\end{equation*}
	Since $\mathrm{\Theta}$ is an equivalence, we have $E_Z \not\cong E_{Z'}$ for $[Z] \neq [Z']$, hence $\varphi$ is injective, hence it is an open embedding since $\Xk$ and $M_h(v)$ are both of dimension $2k$. But $\Xk$ is projective, so $\varphi$ is also closed. Since $\Xk$ and $M_h(v)$ are both irreducible, $\varphi$ must be an isomorphism.
\end{proof}

\begin{rem}\label{rmk:correctfiber}
	Although it is not strictly required in our following discussion, the universal family $\mathcal{E}$ can in fact be given in a more explicit form similar to \eqref{exseq}. To globalize the construction in Theorem \ref{thm:2ndModuli}, we apply the functor $R{\pi_{13}}_\ast(\pi_{12}^\ast \mathcal{I}_\mathcal{Z} \otimes \pi_{23}^\ast (-))$ to \eqref{eqn:Ptriangle} and obtain
	$$ \mathcal{E} \longrightarrow R{\pi_{13}}_\ast(\pi_{12}^\ast \mathcal{I}_\mathcal{Z} \otimes \pi_{23}^\ast \mathrm{\Delta}_\ast\OO_X(e+f)) \longrightarrow R{\pi_{13}}_\ast(\pi_{12}^\ast \mathcal{I}_\mathcal{Z} \otimes \pi_2^\ast \OO_X(e) \otimes \pi_3^\ast \OO_X(f))[2] \longrightarrow \mathcal{E}[1]. $$
	We denote the projections from $X^{[k]} \times X$ to the two factors by $p$ and $q$ respectively. Then a simple calculation reduces the above exact triangle to
	$$ \mathcal{E} \longrightarrow \mathcal{I}_\mathcal{Z} \otimes q^\ast\OO_X(e+f) \longrightarrow Rp_\ast(\mathcal{I}_\mathcal{Z} \otimes q^\ast\OO_X(e)) \boxtimes \OO_X(f)[2] \longrightarrow \mathcal{E}[1]. $$
	For the consistency with the following discussion, we denote
	$$ \mathcal{H} \coloneqq Rp_\ast(\mathcal{I}_\mathcal{Z} \otimes q^\ast\OO_X(e))[1] = \mathrm{\Phi}(\OO_X(e))[1]. $$
	We will prove in Lemma \ref{lem:Exextension} that $\mathcal{H}$ is in fact a sheaf. Therefore the exact triangle reduces to
	$$ 0 \longrightarrow \mathcal{H} \boxtimes \OO_X(f) \longrightarrow \mathcal{E} \longrightarrow \mathcal{I}_{\mathcal{Z}}\otimes q^{*}\OO_{X}(e+f) \longrightarrow 0. $$
\end{rem}

\subsection{The wrong-way fibers}

In this subsection we study the wrong-way fibers of $\mathcal{E}$. For any $x \in X$, we define the corresponding wrong-way fiber to be
$$ E_x \coloneqq \mathcal{E}|_{X^{[k]} \times \{x\}}, $$
which is locally free of rank $2k-1$. As an alternative description, we consider the composition
$$ \mathrm{\Phi} \circ \widehat{\mathrm{\Theta}}: \De(X) \longrightarrow \De(X^{[k]}), $$
which is also an integral functor with kernel $\mathcal{E}$, in the direction opposite to $\mathrm{\Theta} \circ \widehat{\mathrm{\Phi}}$. Then we have
$$ E_x = (\mathrm{\Phi} \circ \widehat{\mathrm{\Theta}})(\mathcal{O}_x). $$
The following result gives a concrete description of $E_x$:

\begin{lem}\label{lem:Exextension}
	The locally free sheaf $E_x$ fits in an exact sequence of sheaves
	\begin{equation}\label{defwrong2}
		\begin{tikzcd}
			0 \arrow[r] & \mathcal{H} \arrow[r] & E_x \arrow[r] & I_{S_x} \arrow[r] & 0,
		\end{tikzcd}
	\end{equation}
	where 
	$$\mathcal{H} \coloneqq \mathrm{\Phi}(\OO_X(e))[1]$$
	is locally free, and $I_{S_x}$ is the ideal sheaf of
	$$S_x \coloneqq \left\lbrace [Z]\in \Xk \, \middle| \, x\in \supp(Z) \right\rbrace \subset X^{[k]}.$$
\end{lem}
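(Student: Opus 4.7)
The strategy is to use the alternative description $E_x = (\mathrm{\Phi}\circ\widehat{\mathrm{\Theta}})(\OO_x)$ provided just before the lemma, computing $\widehat{\mathrm{\Theta}}(\OO_x)$ explicitly as a two-term derived object in $\De(X)$ which can then be pushed through $\mathrm{\Phi}$. Since $\OO_x\otimes\OO_X(f)\cong\OO_x$, formula \eqref{eqn:Thetahat} reduces $\widehat{\mathrm{\Theta}}(\OO_x)$ to $T_{\OO_X}^{-1}(\OO_x)\otimes\OO_X(e)$. Using $R\Hom(\OO_X,\OO_x)=\CC$, the distinguished triangle for $T_{\OO_X}^{-1}$ in Remark \ref{computetwist} becomes
\[ T_{\OO_X}^{-1}(\OO_x)\longrightarrow \OO_x\longrightarrow \OO_X[2]\longrightarrow T_{\OO_X}^{-1}(\OO_x)[1], \]
and rotating and twisting by $\OO_X(e)$ yields
\[ \OO_X(e)[1]\longrightarrow \widehat{\mathrm{\Theta}}(\OO_x)\longrightarrow \OO_x\longrightarrow \OO_X(e)[2]. \]
Applying the exact functor $\mathrm{\Phi}$ then produces a distinguished triangle
\[ \mathcal{H}\longrightarrow E_x\longrightarrow \mathrm{\Phi}(\OO_x)\longrightarrow \mathcal{H}[1] \]
in $\De(\Xk)$, with $\mathcal{H}=\mathrm{\Phi}(\OO_X(e))[1]$ as in the statement.

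Next, I would identify the two flanking terms as genuine sheaves. For $\mathrm{\Phi}(\OO_x)$, flatness of $\IZ$ over $X$ (from \cite[Theorem 2.1]{krug1}) guarantees $\IZ\otimes^L q^*\OO_x=\IZ|_{\{x\}\times\Xk}$, and pushing forward along $p$, which restricts to an isomorphism on this slice, identifies $\mathrm{\Phi}(\OO_x)$ with the ideal sheaf of the scheme-theoretic intersection $\mathcal{Z}\cap(\{x\}\times\Xk)=S_x$; this is the same argument as in Lemma \ref{fmtrans}. For $\mathcal{H}$, I would tensor the ideal sequence $0\to\IZ\to\OO_{X\times\Xk}\to\OO_{\mathcal{Z}}\to 0$ with $q^*\OO_X(e)$ and apply $Rp_*$. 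The computation uses \eqref{eqn:vanishinge} and Riemann-Roch to get $h^0(\OO_X(e))=h^2(\OO_X(e))=0$ and $h^1(\OO_X(e))=k-2$, together with the fact that $p|_{\mathcal{Z}}$ is finite, so that $R^ip_*(\OO_{\mathcal{Z}}\otimes q^*\OO_X(e))$ vanishes for $i\geqslant 1$ while in degree zero it equals the tautological bundle $\OO_X(e)^{[k]}$ of rank $k$. The long exact sequence of higher direct images then forces $\mathrm{\Phi}(\OO_X(e))$ to have cohomology concentrated in degree one, and $\mathcal{H}$ fits into the short exact sequence
\[ 0\longrightarrow \OO_X(e)^{[k]}\longrightarrow \mathcal{H}\longrightarrow H^1(X,\OO_X(e))\otimes\OO_{\Xk}\longrightarrow 0, \]
exhibiting $\mathcal{H}$ as a locally free sheaf of rank $2k-2$.

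Finally, since $\mathcal{H}$, $E_x$ and $I_{S_x}$ are all coherent sheaves concentrated in degree zero, taking cohomology sheaves of the distinguished triangle collapses it into the claimed short exact sequence \eqref{defwrong2}. The rank identity $(2k-2)+1=2k-1=\rk(E_x)$ serves as a consistency check. The main technical obstacle is the step verifying that $\mathcal{H}$ is a sheaf concentrated in degree zero (rather than a genuine two-term complex); once the higher direct image computation is handled carefully, the remaining manipulations of distinguished triangles are purely formal.
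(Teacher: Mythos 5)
Your proposal is correct and follows essentially the same route as the paper: compute $\widehat{\mathrm{\Theta}}(\OO_x)$ as the extension-triangle of $\OO_x$ by $\OO_X(e)[1]$, apply $\mathrm{\Phi}$, identify $\mathrm{\Phi}(\OO_x)=I_{S_x}$ via flatness of $\IZ$ over $X$, and show $\mathcal{H}=\mathrm{\Phi}(\OO_X(e))[1]$ is a locally free sheaf using the vanishing $h^0(\OO_X(e))=h^2(\OO_X(e))=0$ and the extension $0\to\OO_X(e)^{[k]}\to\mathcal{H}\to H^1(\OO_X(e))\otimes\OO_{\Xk}\to 0$. The only cosmetic difference is that you read off the triangle for $T^{-1}_{\OO_X}(\OO_x)$ directly from Remark \ref{computetwist}, whereas the paper applies $T^{-1}_{\OO_X}$ to the sequence $0\to I_x\to\OO_X\to\OO_x\to 0$; both yield the same triangle.
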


\begin{proof}
	We write $F_x \coloneqq \widehat{\mathrm{\Theta}}(\mathcal{O}_x)$, then $E_x = \mathrm{\Phi}(F_x)$. By \eqref{eqn:Thetahat} we have $F_x = T^{-1}_{\mathcal{O}_X}(\mathcal{O}_x) \otimes \mathcal{O}_X(e)$. By applying the inverse spherical functor $T_{\mathcal{O}_X}^{-1}$ on the exact sequence
	$$ 0 \longrightarrow I_x \longrightarrow \OO_X \longrightarrow \OO_x \longrightarrow 0 $$
	we obtain an exact triangle
	$$ T^{-1}_{\mathcal{O}_X}(I_x) \longrightarrow T^{-1}_{\mathcal{O}_X}(\OO_X) \longrightarrow T^{-1}_{\mathcal{O}_X}(\OO_x) \longrightarrow T^{-1}_{\mathcal{O}_X}(I_x)[1]. $$
	Since $T_{\mathcal{O}_X}(\OO_X) = \OO_X[-1]$ and $T_{\OO_X}(\OO_x) = I_x[1]$, the above triangle becomes
	$$ \OO_x[-1] \longrightarrow \OO_X(e)[1] \longrightarrow F_x \longrightarrow \OO_x. $$
	Since $\mathrm{\Phi}(\OO_x) = I_{S_x}$, we further apply the integral functor $\mathrm{\Phi}$ to obtain the exact triangle
	\begin{equation}\label{eqn:Pre1st}
		I_{S_x}[-1] \longrightarrow \mathcal{H} \longrightarrow E_x \longrightarrow I_{S_x},
	\end{equation}
	where $\mathcal{H} = \mathrm{\Phi}(\OO_X(e))[1]$. To compute $\mathcal{H}$, we observe that the short exact sequence of kernels
	$$ 0 \longrightarrow \mathcal{I}_\mathcal{Z} \longrightarrow \OO_{X^{[k]} \times X} \longrightarrow \OO_\mathcal{Z} \longrightarrow 0 $$
	induces an exact triangle
	\begin{equation}\label{eqn:Pre2nd}
		\mathrm{\Phi}(\OO_X(e)) \longrightarrow H^\ast(\OO_X(e)) \otimes \OO_{X^{[k]}} \longrightarrow \OO_X(e)^{[k]} \longrightarrow \mathrm{\Phi}(\OO_X(e))[1].
	\end{equation}
	Since $H^i(\OO_X(e)) = 0$ for $i \neq 1$ by \eqref{eqn:vanishinge}, the exact triangle \eqref{eqn:Pre2nd} reduces to the short exact sequence
	$$ 0 \longrightarrow \OO_{X}(e)^{[k]} \longrightarrow \mathcal{H} \longrightarrow H^1(\OO_X(e))\otimes \OO_{\Xk} \longrightarrow 0, $$
	which in particular implies that $\mathcal{H}$ is a locally free sheaf. It follows that the exact triangle \eqref{eqn:Pre1st} reduces to the short exact sequence \eqref{defwrong2}.
\end{proof}

We will require a technical result in the proof of the stability. For this purpose, we define
$$ I^k X \coloneqq (X^k \times_{S^k X} X^{[k]})_\textrm{red} $$
to be Haiman's isospectral Hilbert scheme, and denote its projections to both factors by $p$ and $q$ respectively. Then the derived McKay correspondence
$$ \mathrm{\Psi} \coloneqq (-)^{\mathfrak{S}_k} \circ q_\ast \circ Lp^\ast: \De(X^k)^{\mathfrak{S}_k} \longrightarrow \De(X^{[k]}) $$
is an equivalence, and so is $\mathrm{\Psi}^{-1}: \De(X^{[k]}) \to \De(X^k)^{\mathfrak{S}_k}$. We have

\begin{lem}\label{lem:bothagree}
	For any coherent sheaf $F$ on $X^{[k]}$, if $\mathrm{\Psi}^{-1}(F)$ is a reflexive sheaf, then $$ \mathrm{\Psi}^{-1}(F) = (F)_{X^k}. $$
\end{lem}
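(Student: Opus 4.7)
The plan is to compare $\mathrm{\Psi}^{-1}(F)$ and $(F)_{X^k}$ on the open locus $X^k_\circ$ of tuples of distinct points, and then invoke reflexivity to extend the agreement over all of $X^k$.

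First, I would analyze the structure of $I^k X$ above $X^{[k]}_\circ$. The Hilbert--Chow morphism restricts to an isomorphism $X^{[k]}_\circ \xrightarrow{\sim} S^k X_\circ$, while the quotient $X^k_\circ \to S^k X_\circ$ is a finite étale Galois cover with group $\mathfrak{S}_k$. Hence the fiber product $X^k \times_{S^k X} X^{[k]}$ restricted over $S^k X_\circ$ is simply $X^k_\circ$, which is already reduced, so no passage to the reduction is needed. Under this identification, the projection $p \colon I^k X \to X^k$ restricts to the open embedding $j \colon X^k_\circ \hookrightarrow X^k$, while $q \colon I^k X \to X^{[k]}$ restricts to $\overline{\sigma}_\circ$ followed by the open embedding $X^{[k]}_\circ \hookrightarrow X^{[k]}$.

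The hard part is the second step: identifying the restriction of $\mathrm{\Psi}$ to the open locus with classical étale descent along $\overline{\sigma}_\circ$. Since $p|_{X^k_\circ}$ is an open embedding, $Lp^\ast$ reduces to the underived pullback; since $\overline{\sigma}_\circ$ is finite étale, $q_\ast$ is exact and commutes with taking $\mathfrak{S}_k$-invariants. Therefore, for any $\mathfrak{S}_k$-equivariant sheaf $G$ on $X^k_\circ$, one has $\mathrm{\Psi}(G) = (\overline{\sigma}_{\circ\ast} G)^{\mathfrak{S}_k}$, whose inverse functor is simply the pullback $\overline{\sigma}_\circ^\ast$. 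This yields
\[
	\mathrm{\Psi}^{-1}(F)|_{X^k_\circ} = \overline{\sigma}_\circ^\ast (F_\circ).
\]

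Finally, I would extend the agreement globally. Since $X^k \setminus X^k_\circ$ is the big diagonal, a union of codimension-two subvarieties in the smooth variety $X^k$, any reflexive sheaf on $X^k$ is canonically isomorphic to $j_\ast j^\ast$ of itself. Using the assumption that $\mathrm{\Psi}^{-1}(F)$ is reflexive, we conclude
\[
	\mathrm{\Psi}^{-1}(F) \cong j_\ast\bigl(j^\ast \mathrm{\Psi}^{-1}(F)\bigr) = j_\ast\bigl(\overline{\sigma}_\circ^\ast F_\circ\bigr) = (F)_{X^k},
\]
as desired.
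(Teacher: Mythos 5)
Your proposal is correct and follows essentially the same strategy as the paper: identify $\mathrm{\Psi}^{-1}(F)$ with $\overline{\sigma}_\circ^\ast(F_\circ)$ over the locus $X^k_\circ$ of distinct points, then use reflexivity and the codimension-two complement to conclude $\mathrm{\Psi}^{-1}(F) \cong j_\ast j^\ast \mathrm{\Psi}^{-1}(F) = (F)_{X^k}$. The only cosmetic difference is that the paper establishes the agreement over $X^k_\circ$ via the adjoint formula $\mathrm{\Psi}^{-1} \cong Rp_\ast \circ q^!$ and a chain of base-change isomorphisms through the diagram relating $I^kX_\circ$ to $I^kX$, whereas you invert the restricted functor directly by classical \'etale descent along $\overline{\sigma}_\circ$; these amount to the same computation.
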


\begin{proof}
	We follow the above notation to denote
	$$ I^k X_\circ \coloneqq X_\circ^k \times_{S^k X_\circ} X_\circ^{[k]}, $$
	then we have the commutative diagram
	\begin{equation*}
		\begin{tikzcd}
			X_\circ^{[k]} \arrow[d,"\alpha"] & I^kX_\circ \arrow[r,"p_\circ"] \arrow[d,"\beta"] \arrow[l,"q_\circ"'] & X_\circ^k \arrow[d,"j"] \\
			X^{[k]} & I^kX \arrow[l,"q"'] \arrow[r,"p"] & X^k,
		\end{tikzcd}
	\end{equation*}
	where $\alpha$, $\beta$, $j$ and $q_\circ$ are \'etale morphisms, and $p_\circ$ is an isomorphism. We also have
	\begin{align*}
		\mathrm{\Psi}^{-1} &\cong Rp_\ast \circ q^!, \\
		(-)_{X^k} &= j_\ast \circ \overline{\sigma}_\circ^\ast \circ \alpha^\ast,
	\end{align*}
	where the first equation is due to the fact that $\mathrm{\Psi}^{-1}$ is the right adjoint of $\mathrm{\Psi}$. It follows that
	\begin{align*}
		j^\ast \circ \mathrm{\Psi}^{-1} &\cong j^\ast \circ Rp_\ast \circ q^! \cong {p_\circ}_\ast \circ \beta^\ast \circ q^! \\
		&\cong {p_\circ}_\ast \circ \beta^! \circ q^! \cong {p_\circ}_\ast \circ q_\circ^! \circ \alpha^! \\
		&\cong {p_\circ}_\ast \circ q_\circ^\ast \circ \alpha^\ast \cong \overline{\sigma}_\circ^\ast \circ \alpha^\ast.
	\end{align*}
	Therefore we have
	$$ j_\ast \circ j^\ast \circ \mathrm{\Psi}^{-1} \cong (-)_{X^k}. $$
	Since $\mathrm{\Delta} = X^k \setminus X^k_\circ$ is of codimension $2$, if $\mathrm{\Psi}^{-1}(F)$ is a reflexive sheaf, then we have
	$$ \mathrm{\Psi}^{-1}(F) \cong j_\ast \circ j^\ast \circ \mathrm{\Psi}^{-1}(F) \cong (F)_{X^k} $$
	as desired.
\end{proof}

\begin{lem}\label{lem:Hdecomp}
	The sheaf $(\mathcal{H})_{X^k}$ fits in an exact sequence of $\mathfrak{S}_k$-invariant locally free sheaves
	\begin{equation}\label{eqn:Hdecomp}
		0 \longrightarrow \bigoplus_{i=1}^k q_i^\ast \OO_X(e) \longrightarrow (\mathcal{H})_{X^k} \longrightarrow H^1(\OO_X(e)) \otimes \OO_{X^k} \longrightarrow 0.
	\end{equation}
	Moreover, every $\mathfrak{S}_k$-invariant global section of $(\mathcal{H})_{X^k}$ vanishes; namely $H^0((\mathcal{H})_{X^k})^{\mathfrak{S}_k} = 0$.
\end{lem}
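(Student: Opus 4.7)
The plan is to transport the short exact sequence from Lemma \ref{lem:Exextension} through the inverse derived McKay equivalence $\mathrm{\Psi}^{-1} \colon \De(X^{[k]}) \to \De(X^k)^{\mathfrak{S}_k}$, identify its middle term with $(\mathcal{H})_{X^k}$ via Lemma \ref{lem:bothagree}, and finally reduce the vanishing of $\mathfrak{S}_k$-invariant global sections to a $\Hom$ computation on $X^{[k]}$ through the adjunction across $\mathrm{\Psi}$.

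First I would apply $\mathrm{\Psi}^{-1}$ to the short exact sequence
$$ 0 \longrightarrow \OO_X(e)^{[k]} \longrightarrow \mathcal{H} \longrightarrow H^1(\OO_X(e)) \otimes \OO_{X^{[k]}} \longrightarrow 0 $$
extracted from the proof of Lemma \ref{lem:Exextension}. The outer terms have reflexive preimages which, by Lemma \ref{lem:bothagree}, coincide with their $(-)_{X^k}$-counterparts: $\mathrm{\Psi}^{-1}(\OO_X(e)^{[k]}) = \bigoplus_{i=1}^k q_i^\ast \OO_X(e)$ by \cite[Lemma 1.1]{stapleton}, and $\mathrm{\Psi}^{-1}(H^1(\OO_X(e)) \otimes \OO_{X^{[k]}}) = H^1(\OO_X(e)) \otimes \OO_{X^k}$ directly. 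The long exact sequence of cohomology sheaves associated to the resulting exact triangle in $\De(X^k)^{\mathfrak{S}_k}$ then forces $\mathrm{\Psi}^{-1}(\mathcal{H})$ to be concentrated in degree zero and to fit into a short exact sequence of the shape \eqref{eqn:Hdecomp}. As an extension of locally free sheaves, $\mathrm{\Psi}^{-1}(\mathcal{H})$ is itself locally free, hence reflexive, so Lemma \ref{lem:bothagree} identifies it with $(\mathcal{H})_{X^k}$.

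For the vanishing claim, the equivalence $\mathrm{\Psi}$ together with the standard fact $\mathrm{\Psi}(\OO_{X^k}) = \OO_{X^{[k]}}$ yields
$$ H^0((\mathcal{H})_{X^k})^{\mathfrak{S}_k} = \Hom_{\De(X^k)^{\mathfrak{S}_k}}(\OO_{X^k}, \mathrm{\Psi}^{-1}(\mathcal{H})) = \Hom_{\De(X^{[k]})}(\OO_{X^{[k]}}, \mathcal{H}) = H^0(X^{[k]}, \mathcal{H}), $$
so it suffices to prove $H^0(X^{[k]}, \mathcal{H}) = 0$. The long exact sequence of cohomology associated to the short exact sequence on $X^{[k]}$ reads
$$ 0 \longrightarrow H^0(\OO_X(e)^{[k]}) \longrightarrow H^0(\mathcal{H}) \longrightarrow H^1(\OO_X(e)) \stackrel{\delta}{\longrightarrow} H^1(\OO_X(e)^{[k]}), $$
where the leftmost term vanishes by the standard identification $H^0(F^{[k]}) = H^0(F) \otimes H^0(\OO_{X^{[k-1]}})$ together with $H^0(\OO_X(e)) = 0$ from \eqref{eqn:vanishinge}. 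Unwinding the construction shows that $\delta$ is the natural map induced by the restriction $q^\ast \OO_X(e) \to \OO_\mathcal{Z} \otimes q^\ast \OO_X(e)$; the Scala--Krug computation of cohomology of tautological sheaves on a K3 Hilbert scheme then gives that this map is an isomorphism, so $\delta$ is injective and $H^0(\mathcal{H}) = 0$, as required.

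The main expected obstacle is verifying the injectivity of $\delta$, i.e.\ that the natural map $H^1(\OO_X(e)) \to H^1(\OO_X(e)^{[k]})$ is an isomorphism. While this follows from the Scala--Krug formula $H^\ast(X^{[k]}, L^{[k]}) \cong H^\ast(X,L) \otimes H^\ast(X^{[k-1]}, \OO_{X^{[k-1]}})$, one may verify it directly using the Leray spectral sequence for $p \colon X^{[k]} \times X \to X^{[k]}$ (for which $R^i p_\ast \OO_\mathcal{Z} = 0$ when $i > 0$), combined with the Künneth formula and the well-known cohomology of $\OO_{X^{[k]}}$ on the K3 Hilbert scheme.
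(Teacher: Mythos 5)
Your overall strategy (transport through $\mathrm{\Psi}^{-1}$, identify the result with $(\mathcal{H})_{X^k}$ via Lemma \ref{lem:bothagree}, then handle the invariant sections) is the same as the paper's, but your execution of the first step has a genuine gap. You apply $\mathrm{\Psi}^{-1}$ termwise to the sequence $0 \to \OO_X(e)^{[k]} \to \mathcal{H} \to H^1(\OO_X(e))\otimes\OO_{\X} \to 0$ and claim $\mathrm{\Psi}^{-1}(\OO_X(e)^{[k]}) = \bigoplus_i q_i^\ast\OO_X(e)$ because ``the outer terms have reflexive preimages'' and by \cite[Lemma 1.1]{stapleton}. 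This is circular: Stapleton's lemma computes the functor $(-)_{X^k}$, not $\mathrm{\Psi}^{-1}$, and Lemma \ref{lem:bothagree} only lets you pass from one to the other \emph{after} you know that $\mathrm{\Psi}^{-1}(\OO_X(e)^{[k]})$ is a reflexive sheaf --- which is precisely the nontrivial content here and cannot be assumed. A priori $\mathrm{\Psi}^{-1}$ of a tautological bundle is a genuine complex whose description (following Scala and Krug) involves correction terms supported on the partial diagonals, so ``concentrated in degree zero and reflexive'' is exactly what must be proved, not an observation. The paper supplies the missing input by invoking \cite[Theorem 3.6]{krug18}: the composition $\mathrm{\Psi}^{-1}\circ\mathrm{\Phi}$ is the truncated universal ideal functor of \cite[Definition 5.1]{krugsosna}, which computes $\mathrm{\Psi}^{-1}(\mathcal{H}) = \mathrm{\Psi}^{-1}(\mathrm{\Phi}(\OO_X(e))[1])$ directly as an extension of $H^1(\OO_X(e))\otimes\OO_{X^k}$ by $\bigoplus_i q_i^\ast\OO_X(e)$, bypassing any computation of $\mathrm{\Psi}^{-1}(\OO_X(e)^{[k]})$. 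Without some such theorem your argument does not close.

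Your treatment of the vanishing statement is a legitimately different route: reducing $H^0((\mathcal{H})_{X^k})^{\mathfrak{S}_k}$ to $H^0(\X,\mathcal{H})$ via the equivalence is clean and correct, but the remaining claim --- that the connecting map $H^1(\OO_X(e)) \to H^1(\OO_X(e)^{[k]})$ is an isomorphism --- does not follow from the Danila--Scala dimension formula alone; that formula tells you the two spaces have equal dimension, and you must still argue that \emph{this particular} map (induced by $Rp_\ast q^\ast\OO_X(e) \to \OO_X(e)^{[k]}$) realizes the isomorphism, which requires unwinding how the formula is proved. The paper avoids this by observing that \eqref{eqn:Hdecomp} is the \emph{universal} equivariant extension (its class is the identity in $\End(H^1(\OO_X(e)))$ because $\delta$ is a collection of evaluation maps), so the connecting homomorphism on invariant cohomology is tautologically an isomorphism. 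This is a smaller gap than the first, but it too needs to be filled.
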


\begin{proof}
	By \cite[Theorem 3.6]{krug18}, the composition $\mathrm{\Psi}^{-1} \circ \mathrm{\Phi}: \De(X) \to \De(X^n)^{\mathfrak{S}_k}$ agrees with the truncated universal ideal functor defined in \cite[Definition 5.1]{krugsosna}, therefore we have an exact triangle
	\begin{equation}\label{eqn:finaltriangle}
		(\mathrm{\Psi}^{-1} \circ \mathrm{\Phi})(\OO_X(e)) \longrightarrow H^\ast(\OO_X(e)) \otimes \OO_{X^k} \stackrel{\delta}{\longrightarrow} \bigoplus_{i=1}^k q_i^\ast \OO_X(e) \longrightarrow (\mathrm{\Psi}^{-1} \circ \mathrm{\Phi})(\OO_X(e))[1],
	\end{equation}
	where each component of $\delta$ is an evaluation map. Since $\mathcal{H}$ is a locally free sheaf by Lemma \ref{lem:Exextension}, it follows by Lemma \ref{lem:bothagree} that $\mathrm{\Psi}^{-1}(\mathcal{H}) = (\mathcal{H})_{X^k}$. Hence
	$$ (\mathrm{\Psi}^{-1} \circ \mathrm{\Phi})(\OO_X(e)) = \mathrm{\Psi}^{-1}(\mathcal{H})[-1] = (\mathcal{H})_{X^k}[-1]. $$
	Together with \eqref{eqn:vanishinge}, the exact triangle \eqref{eqn:finaltriangle} becomes the short exact sequence \eqref{eqn:Hdecomp}, which is the universal equivariant extension of $\OO_{X^k}$ by $\bigoplus_{i=1}^k q_i^\ast \OO_X(e)$ since $\delta$ is a collection of evaluation maps. Therefore its induced connecting map in the long exact sequence of cohomology groups
	$$ H^0 \left( H^1(\OO_X(e)) \otimes \OO_{X^k} \right)^{\mathfrak{S}_k} \longrightarrow H^1 \left( \bigoplus_{i=1}^k q_i^\ast \OO_X(e) \right)^{\mathfrak{S}_k} $$
	is naturally an isomorphism, which implies $H^0((\mathcal{H})_{X^k})^{\mathfrak{S}_k} = 0$.
\end{proof}

We will eventually prove the stability of $E_x$ with respect to some ample class $H\in \NS(\Xk)$. Similar to the previous section we have 
\begin{equation*}
	\NS(\Xk)=\ZZ e_k\oplus\ZZ f_k\oplus\ZZ\delta.
\end{equation*}
For any $l \in \NS(X)$ and any ample class $h \in \NS(X)$ we have the intersection numbers
\begin{align*}
	l_{k}h_{k}^{2k-1} &= \frac{(2k-1)!}{(k-1)!2^{k-1}}(lh)(h^2)^{k-1}, \\
	\delta h_{k}^{2k-1} &= 0
\end{align*}
by \cite[Lemma 1.10]{wandel}. Moreover, by Lemma \ref{lem:Exextension} and \cite[Lemma 1.5]{wandel} we also have
$$ c_1(E_x) = c_1(\mathcal{H}) = c_1 (\OO_X(e)^{[k]}) = e_k - \delta. $$
It follows by the above formulas that for any ample class $h \in \NS(X)$, we have
$$ c_1(E_x) h_k^{2k-1} = \frac{(2k-1)!}{(k-1)!2^{k-1}}(eh)(h^2)^{k-1}. $$
However, $\OO_X(e)^{[k]}$ is a subsheaf of $E_x$ with the same $c_1$. For $E_x$ to be $\mu_{h_k}$-stable, it is necessary to have $eh<0$ since $h^2>0$. An easy computation shows that this condition cannot be fulfilled by the class $h=e+(2k-1)f$ from Lemma \ref{ample}, so we cannot hope that $E_x$ is $\mu$-stable with respect to the class $h_k$ induced by this $h$. However, for the class $\widehat{h}=(2k)e + (2k-1)f$ from Lemma \ref{ample}, we do have
\begin{align*}
	e\widehat{h}&=(2k)e^2+(2k-1)ef\\
	&=-(4k^2)+(4k^2-1)=-1.
\end{align*}

Indeed, in the rest of this subsection we will prove that $E_x$ is $\mu$-stable with respect to $\widehat{h}_k$. We use the same notation as in Section \ref{stabil1} and also need the following formula: assume $F$ is a coherent sheaf on $X^k$ with $\mathfrak{S}_{k}$-invariant Chern class 
\begin{equation*}
	c_1(F) = \sum\limits_{i=1}^{k} q_i^\ast c
\end{equation*}
where $c \in \NS(X)$, then the intersection number 
\begin{equation*}
c_1(F) \widehat{h}_{X^{k}}^{2k-1} = a_k (c\cdot\widehat{h})(\widehat{h}^2)^{k-1}
\end{equation*}
where $a_k = \frac{k(2k-1)!}{2^{k-1}}$; see \cite[Lemma 1.10]{wandel}. The main result of this subsection is the following

\begin{prop}
	\label{prop:stable}
	$E_x$ is $\mu$-stable with respect to $\widehat{h}_k$.
\end{prop}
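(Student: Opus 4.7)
The plan is to follow the Stapleton-style strategy employed in Propositions~\ref{prop:stableK} and \ref{prop:stableEx}. The first step is the reduction to a problem on $X^k$: if $F \subset E_x$ is any reflexive destabilizing subsheaf, then $(F)_{X^k}$ is an $\mathfrak{S}_k$-invariant reflexive subsheaf of $(E_x)_{X^k}$, and \cite[Lemma~1.2]{stapleton} implies that $\mu_{\widehat{h}_k}(F) < \mu_{\widehat{h}_k}(E_x)$ if and only if $\mu_{\widehat{h}_{X^k}}((F)_{X^k}) < \mu_{\widehat{h}_{X^k}}((E_x)_{X^k})$. Hence it suffices to rule out $\mathfrak{S}_k$-invariant reflexive destabilizing subsheaves $F \subset (E_x)_{X^k}$; $\mathfrak{S}_k$-invariance forces $c_1(F) = \alpha \sum_i q_i^\ast e + \beta \sum_i q_i^\ast f$ for some $\alpha,\beta \in \ZZ$.

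Next, I would apply the functor $(-)_{X^k}$ to the exact sequence~\eqref{defwrong2} of Lemma~\ref{lem:Exextension} to obtain a left-exact sequence
\begin{equation*}
0 \longrightarrow (\mathcal{H})_{X^k} \longrightarrow (E_x)_{X^k} \longrightarrow (I_{S_x})_{X^k}
\end{equation*}
on $X^k$, whose last term is a rank-one torsion-free sheaf with $c_1 = 0$ (since $S_x$ has codimension two in $X^{[k]}$). Combined with \eqref{eqn:Hdecomp} of Lemma~\ref{lem:Hdecomp}, this yields a three-step filtration of $(E_x)_{X^k}$ whose successive subquotients are $\bigoplus_i q_i^\ast \OO_X(e)$, the trivial bundle $H^1(\OO_X(e))\otimes \OO_{X^k}$, and a rank-one subsheaf of $\OO_{X^k}$. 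Computing $c_1((E_x)_{X^k}) = \sum_i q_i^\ast e$ and using $e\widehat{h} = -1$ together with the intersection formulas from \cite[Lemma~1.10]{wandel}, the destabilizing inequality becomes a concrete numerical constraint on $(\alpha,\beta,\rk F)$.

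The heart of the proof is a case analysis parallel to that of Theorem~\ref{thm:2ndModuli}. For a putative destabilizing $F$, I would set $F_0 \coloneqq F \cap (\mathcal{H})_{X^k}$ and $F_1 \coloneqq F/F_0$, which embeds into the rank-one quotient $(E_x)_{X^k}/(\mathcal{H})_{X^k}$, so $\rk F_1 \in \{0,1\}$. When $\rk F_1 = 1$, anti-effectivity of $c_1(F_1)$ together with the rank count forces $\mu_{\widehat{h}_{X^k}}(F) < \mu_{\widehat{h}_{X^k}}((E_x)_{X^k})$ by direct slope comparison. When $\rk F_1 = 0$, i.e.\ $F \subset (\mathcal{H})_{X^k}$, one further splits $F$ using Lemma~\ref{lem:Hdecomp} and repeats the subcase strategy of Theorem~\ref{thm:2ndModuli}: the effectivity constraints of Lemma~\ref{eff} dispose of all cases with $c_1(F)$ nonzero, and the delicate $c_1(F) = 0$ case is handled via the vanishing $H^0((\mathcal{H})_{X^k})^{\mathfrak{S}_k} = 0$ from Lemma~\ref{lem:Hdecomp}, in the same spirit as the ``$a = 0$'' step of Proposition~\ref{prop:stableK}.

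The main obstacle is controlling subsheaves $F \subset (\mathcal{H})_{X^k}$ that project nontrivially to both steps of the filtration~\eqref{eqn:Hdecomp}: the rank bound $\rk F \leqslant 2k-2$ must be matched against the two filtration pieces of ranks $k$ and $k-2$, while one simultaneously extracts effectivity of $c_1\bigl(F \cap \bigoplus_i q_i^\ast \OO_X(e)\bigr)$ and cohomological vanishing from the trivial-quotient portion to force the strict slope inequality. Adapting the numerical case analysis in the proof of Theorem~\ref{thm:2ndModuli} to the equivariant setting on $X^k$, and ensuring uniform coverage of all numerical configurations of $(\alpha,\beta,\rk F_0, \rk F_1)$, will be the most delicate part of the argument.
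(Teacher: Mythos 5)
Your overall strategy is the one the paper uses: reduce to $X^k$ via \cite[Lemma 1.2]{stapleton}, combine the left-exact image of \eqref{defwrong2} with \eqref{eqn:Hdecomp} to get the three-step filtration of $(E_x)_{X^k}$ with graded pieces $\bigoplus_i q_i^\ast\OO_X(e)$, $H^1(\OO_X(e))\otimes\OO_{X^k}$ and a rank-one subsheaf of $\OO_{X^k}$, compute $c_1((E_x)_{X^k})=\sum_i q_i^\ast e$ with $e\widehat{h}=-1$, and invoke $H^0((\mathcal{H})_{X^k})^{\mathfrak{S}_k}=0$. But there is a genuine gap in your case $\rk F_1=1$, which you claim follows by ``direct slope comparison'' from anti-effectivity of $c_1(F_1)$ and the rank count. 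This fails in the borderline configuration $c_1(F_0)\widehat{h}_{X^k}^{2k-1}=c_1(F_1)\widehat{h}_{X^k}^{2k-1}=0$: since $\mu_{\widehat{h}_{X^k}}((E_x)_{X^k})=-a_k(\widehat{h}^2)^{k-1}/(2k-1)<0$, any subsheaf of degree zero \emph{does} destabilize, and an extension of a degree-zero rank-one subsheaf of $(I_{S_x})_{X^k}$ by a degree-zero subsheaf of $(\mathcal{H})_{X^k}$ cannot be excluded by numerics alone. Ruling it out needs the cohomological argument you reserve for $F\subset(\mathcal{H})_{X^k}$, plus one ingredient you never mention: for a maximal-slope stable reflexive subsheaf $F'\subseteq F$ with $\mu_{\widehat{h}_{X^k}}(F')\geqslant 0$, every map to the stable negative-slope bundles $q_i^\ast\OO_X(e)$ vanishes, so $F'$ either maps nontrivially to $(I_{S_x})_{X^k}\hookrightarrow(I_{S_x})_{X^k}^{\vee\vee}=\OO_{X^k}$ --- impossible because stability forces $F'\cong\OO_{X^k}$ while $(I_{S_x})_{X^k}\neq\OO_{X^k}$ --- or lands in $(\mathcal{H})_{X^k}$ and then in the trivial quotient, yielding an $\mathfrak{S}_k$-invariant global section of $(\mathcal{H})_{X^k}$, contradicting Lemma \ref{lem:Hdecomp}. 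The paper's proof is organized around a single dichotomy on the sign of $c\cdot\widehat{h}$ (not on $\rk F_1$) precisely so that this degree-zero case is treated uniformly for all subsheaves, whether or not they lie in $(\mathcal{H})_{X^k}$.

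A secondary point: your appeal to the effectivity constraints of Lemma \ref{eff} is misplaced here. In Theorem \ref{thm:2ndModuli} those constraints apply because the relevant sheaf is a \emph{quotient} of a trivial bundle, so its $c_1$ is effective; in the present proposition everything is a \emph{subsheaf}, and the correct inputs are the polystability (of strictly negative slope) of $\bigoplus_i q_i^\ast\OO_X(e)$ and the stability of $\OO_{X^k}$, which bound degrees from above. Once the $\rk F_1$ dichotomy is replaced by the sign dichotomy and the stable-subsheaf argument above is supplied, your outline closes into the paper's proof.
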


\begin{proof}
	Assume that $F$ is a reflexive subsheaf of $E_x$ of rank $1\leqslant r \leqslant 2k-2$. We need to show that $\mu_{\widehat{h}_{k}}(F) < \mu_{\widehat{h}_{k}}(E_x)$. By \cite[Lemma 1.2]{stapleton}, it suffices to check that
	\begin{equation*}
		\mu_{\widehat{h}_{X^{k}}}((F)_{X^{k}}) < \mu_{\widehat{h}_{X^k}}((E_x)_{X^{k}}),
	\end{equation*}
	where $(F)_{X^{k}}$ is an $\mathfrak{S}_{k}$-invariant subsheaf of $(E_x)_{X^{k}}$. 
	
We apply the functor $j_\ast(\overline{\sigma}_{k, \circ}^\ast( (-)_\circ))$ to \eqref{defwrong2}. Since the functor is left exact, together with \cite[Lemma 1.1]{stapleton} we obtain that
\begin{equation}
	\label{eqn:leftexact}
	0 \longrightarrow (\mathcal{H})_{X^k}  \longrightarrow (E_x)_{X^k} \longrightarrow (I_{S_x})_{X^k} \longrightarrow Q \longrightarrow 0,
\end{equation}
such that $\supp(Q) \subseteq \mathrm{\Delta}$, where $\mathrm{\Delta} = X^k \setminus X_\circ^k$ is the big diagonal. It is also clear that
	\begin{equation*}
		\overline{\sigma}_{k, \circ}^\ast( (I_{S_x})_\circ) = \left( \bigotimes\limits_{i=1}^{k} q_i^\ast I_x\right)  \Bigg|_{X^{k} \setminus \mathrm{\Delta}}.
	\end{equation*}
Since $\mathrm{\Delta}$ is of codimension $2$ in $X^{k}$, we have that $c_1((I_{S_x})_{X^{k}}) = 0$. It follows that
\begin{equation*}
	c_1((E_x)_{X^{k}}) = c_1((\mathcal{H})_{X^{k}}).
\end{equation*}
Moreover, we have by \eqref{eqn:Hdecomp} that
\begin{equation*}
	c_1((\mathcal{H})_{X^k})=\sum\limits_{i=1}^{k }q_i^\ast e.
\end{equation*}
Therefore 
\begin{align*}
	c_1((E_x)_{X^{k}}) \widehat{h}_{X^{k}}^{2k-1}&=c_1((\mathcal{H})_{X^{k}}) \widehat{h}_{X^{k}}^{2k-1}\\
	&= a_k (e \widehat{h}) (\widehat{h}^2)^{k-1}\\
	&= a_k  (-1) (\widehat{h}^2)^{k-1}.
\end{align*}
Since $(F)_{X^{k}}$ is $\mathfrak{S}_{k}$-invariant, we have $c_1((F)_{X^k}) = \sum\limits_{i=1}^{k} q_i^\ast c$ for some $c \in \NS(X)$, and 
\begin{equation*}
	c_1((F)_{X^{k}}) \widehat{h}^{2k-1}_{X^{k}} = a_k(c\cdot\widehat{h})(\widehat{h}^2)^{k-1}.
\end{equation*}
We have the following two cases:

If $c\cdot\widehat{h} \leqslant -1$, then we have
\begin{equation*}
	c_1((F)_{X^{k}}) \widehat{h}^{2k-1}_{X^{k}} \leqslant c_1((E_x)_{X^{k}}) \widehat{h}_{X^{k}}^{2k-1} < 0.
\end{equation*}
Since $\rk((F)_{X^{k}}) < \rk((E_x)_{X^{k}})$, it follows that 
\begin{equation*}
	\mu_{\widehat{h}_{X^{k}}}((F)_{X^{k}}) < \mu_{\widehat{h}_{X^{k}}}((E_x)_{X^{k}}).
\end{equation*}	

If $c\cdot\widehat{h} \geqslant 0$, then $c_1((F)_{X^k}) \widehat{h}^{2k-1}_{X^k} \geqslant 0$. 

We choose a (not necessarily $\mathfrak{S}_{k}$-invariant) non-zero $\mu_{\widehat{h}_{X^{k}}}$-stable reflexive subsheaf of maximal slope $F' \subseteq (F)_{X^{k}}$, then $\mu_{\widehat{h}_{X^{k}}}(F') \geqslant 0$. However $q_i^\ast \OO_X(e)$ is $\mu_{\widehat{h}_{X^{k}}}$-stable  for $i=1,\ldots, k$, and 
\begin{equation*}
	c_1(q_i^\ast \OO_X(e)) \widehat{h}_{X^{k}}^{2k-1} = a_k(e \widehat{h})(\widehat{h}^2)^{k-1} = a_k(-1)(\widehat{h}^2)^{k-1} < 0.
\end{equation*}
Hence the only map from $F'$ to $q_i^\ast \OO_X(e)$ is zero. 

By \eqref{eqn:leftexact} we obtain a morphism $F' \stackrel{\alpha}{\rightarrow} (I_{S_x})_{X^{k}}$. It is clear that $(I_{S_x})_{X^{k}}$ is torsion free, so it is a subsheaf of its double dual $(I_{S_x})_{X^{k}}^{\vee\vee}$. We also note that the restriction of $(I_{S_x})_{X^{k}}$ on $X^{k} \setminus (\mathrm{\Delta} \cup q_1^{-1}(\{x\}) \cup \cdots \cup q_k^{-1}(\{x\}))$ is the trivial line bundle, hence 
\begin{equation*}
	(I_{S_x})_{X^{k}}^{\vee\vee} = \OO_{X^{k}}.
\end{equation*}
Therefore we have 
\begin{equation*}
	F' \stackrel{\alpha}{\rightarrow} (I_{S_x})_{X^{k}} \hookrightarrow \OO_{X^{k}}.
\end{equation*}
	If $\alpha \neq 0$, then the composition of both maps is non-zero, hence the stability forces 
	\begin{equation*}
		\mu_{\widehat{h}_{X^{k}}}(F') = 0 = \mu_{\widehat{h}_{X^{k}}}(\OO_{X^{k}}).
	\end{equation*}
	Since $F'$ is reflexive, the composition must be the identity map. Since $(I_{S_x})_{X^{k}} \neq \OO_{X^{k}}$ this is a contradiction. It follows that $\alpha = 0$, which implies by \eqref{eqn:leftexact} that $F'$ is a subsheaf of $(\mathcal{H})_{X^{k}}$. By \eqref{eqn:Hdecomp} and the above discussion, we can furthermore conclude that $F'$ is isomorphic to a subsheaf of the trivial bundle $H^1(\OO_X(e)) \otimes \OO_{X^{k}}$. The stability forces again that
	\begin{equation*}
		\mu_{\widehat{h}_{X^{k}}}(F') = 0 = \mu_{\widehat{h}_{X^{k}}}(\OO_{X^{k}})
	\end{equation*}
	and $F' \cong \OO_{X^{k}}$. Moreover, since all global sections of the trivial bundle $H^1(\OO_X(e)) \otimes \OO_{X^{k}}$ in \eqref{eqn:Hdecomp} are invariant under the permutation of $\mathfrak{S}_{k}$, we conclude that $F'$ itself is also $\mathfrak{S}_{k}$-invariant, which gives a non-trivial $\mathfrak{S}_k$-invariant global section of $\mathcal{H}_{X^k}$. This contradicts Lemma \ref{lem:Hdecomp}, therefore $(E_x)_{X^{k}}$ cannot be destabilized by any $\mathfrak{S}_{k}$-invariant subsheaf, which concludes that $E_x$ is $\mu_{\widehat{h}_{k}}$-stable.
\end{proof}

\subsection{A smooth connected component}

In this subsection, we will interpret the universal sheaf $\mathcal{E}$ as a family of stable sheaves on $X^{[n]}$ whose base is a smooth connected component of the corresponding moduli space. We have shown above that all the wrong-way fibers $E_x$ of the family $\mathcal{E}$ are $\mu$-stable with respect to $\widehat{h}_k$. We follow the idea in Theorem \ref{prop:sameH1} to show their $\mu$-stability with respect to a certain ample class near $\widehat{h}_k$.

\begin{thm}\label{prop:sameH2}
	There exists some ample class $H \in \NS(X^{[k]})$ near $\widehat{h}_k$, such that $E_x$ is $\mu_H$-stable for all $x \in X$ simultaneously.
\end{thm}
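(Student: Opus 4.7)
The plan is to mirror the argument of Theorem \ref{prop:sameH1} with $\widehat{h}_k$ in place of $h_k$. By Proposition \ref{prop:stable}, each $E_x$ is $\mu_{\widehat{h}_k}$-stable, so together with \cite[Theorem 2.3.1]{dCM} the hypotheses of \cite[Proposition 4.8]{stapleton} are satisfied for every individual $E_x$. Setting $\alpha := \widehat{h}_k^{2k-1}$ and $\overline{U} := \bigcap_{x \in X} \SStab(E_x)$, the latter is a convex closed subset of $N^1(X^{[k]})_{\mathbb{R}}$ containing $\alpha$. The goal is to show that its interior $U$ is non-empty and that $U \subseteq \bigcap_{x \in X} \Stab(E_x)$; any ample class $H \in U$ will then satisfy the conclusion.

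The core step is a uniform version of \cite[Theorem 3.4]{greb16}: for every $\beta \in \mathrm{Mov}(X^{[k]})^\circ$ there exists $e \in \mathbb{Q}^+$ with $\alpha + \varepsilon \beta \in \bigcap_{x \in X} \Stab(E_x)$ for all $\varepsilon \in [0,e]$. Following the proof in \emph{loc.\ cit.} verbatim, the only new ingredient needed is finiteness of the set
$$ S := \bigl\{ c_1(F) \,\big|\, F \subseteq E_x \text{ for some } x \in X \text{ with } \mu_\beta(F) \geqslant c \bigr\}, $$
where $c = \mu_\beta(E_x)$ is independent of $x$ because all the $E_x$ share the same Mukai vector.

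The main obstacle is precisely the finiteness of $S$. In Theorem \ref{prop:sameH1} this was immediate from the uniform inclusion $E_x \subseteq K$ combined with \cite[Theorem 2.29]{greb16}, but here Lemma \ref{lem:Exextension} only provides the extension $0 \to \mathcal{H} \to E_x \to I_{S_x} \to 0$, so the $E_x$ do not embed into a common ambient sheaf. I would instead appeal to the fact that by Proposition \ref{prop:universal-family}, $\{E_x\}_{x \in X}$ arises from the locally free sheaf $\mathcal{E}$ on $X^{[k]} \times X$ which is flat over the projective base $X$; hence it is automatically a bounded family of sheaves on $X^{[k]}$. Standard properties of the relative Quot scheme then ensure that the family of all subsheaves $F \subseteq E_x$ with the relevant bounded Hilbert polynomial is itself bounded, so only finitely many Chern classes $c_1(F)$ occur, i.e.\ $S$ is finite. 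With this in hand, the rest of \cite[Theorem 3.4]{greb16} applies without change.

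Once this uniform perturbation lemma is established, the remainder of the proof of Theorem \ref{prop:sameH1} carries over word for word: $\overline{U}$ is full-dimensional because one can perturb $\alpha$ along any $\beta \in \mathrm{Mov}(X^{[k]})^\circ$ lying outside a given hyperplane, so its interior $U$ is non-empty by convexity; and if some $E_{x_0}$ were only strictly semistable at an interior point $\gamma_0 \in U$, then by linearity of the slope function the witnessing subsheaf would genuinely destabilize $E_{x_0}$ on one side of a hyperplane through $\gamma_0$, contradicting $U \subseteq \SStab(E_{x_0})$. Hence $U \subseteq \bigcap_{x \in X} \Stab(E_x)$, and any ample $H \in U$ near $\widehat{h}_k$ has the required property.
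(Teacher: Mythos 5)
Your overall architecture is right, and you have correctly located the one genuinely new difficulty: the finiteness of the set $S$ of first Chern classes of $\beta$-destabilizing subsheaves, which in Theorem \ref{prop:sameH1} came for free from the uniform inclusion $E_x \subseteq K$. However, your substitute argument for this step is a gap. The condition defining $S$ is the slope inequality $\mu_\beta(F) \geqslant c$ with respect to a movable class $\beta$; this does \emph{not} bound the Hilbert polynomials of the subsheaves $F$, so there is no ``relevant bounded Hilbert polynomial'' to feed into a relative Quot scheme, and the phrase begs exactly the question at issue. The finiteness of the set of possible $c_1(F)$ under a slope-from-below condition is precisely the content of \cite[Theorem 2.29]{greb16}, which is a statement about subsheaves of a \emph{single fixed} sheaf; extending it to a family $\{E_x\}_{x\in X}$ that is merely bounded would require reproving it (e.g.\ via relative Harder--Narasimhan stratifications), not just invoking boundedness. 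As stated, your key step is an assertion rather than a proof.

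The paper's resolution is more elementary and is worth noting, since you explicitly observed that the $E_x$ do not embed into a common ambient sheaf but did not draw the right conclusion from the extension \eqref{defwrong2}. One does not need the $E_x$ themselves to embed anywhere: one only needs to control first Chern classes of subsheaves, and these are insensitive to the extension. Concretely, any subsheaf $F \subseteq E_x$ sits in an exact sequence $0 \to F_1 \to F \to F_2 \to 0$ with $F_1 \subseteq \mathcal{H}$ and $F_2 \subseteq I_{S_x}$, so $c_1(F) = c_1(F_1 \oplus F_2)$ and $\mu_\beta(F) = \mu_\beta(F_1 \oplus F_2)$, where $F_1 \oplus F_2$ is a subsheaf of the single fixed sheaf $\mathcal{H} \oplus \OO_{X^{[k]}}$ (using $I_{S_x} \subseteq \OO_{X^{[k]}}$). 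Hence $S$ is contained in the corresponding set for $\mathcal{H} \oplus \OO_{X^{[k]}}$, which is finite by a direct application of \cite[Theorem 2.29]{greb16}. With that substitution your remaining steps (full-dimensionality of $\overline{U}$, non-emptiness of its interior, and the hyperplane argument showing $U \subseteq \bigcap_{x\in X}\Stab(E_x)$) go through exactly as in Theorem \ref{prop:sameH1}.
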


\begin{proof}
	The same as in Theorem \ref{prop:sameH1}, the value of $c = \mu_\beta(E_x)$ is independent of the choice of $x \in X$. We still define 
	$$ S := \{ c_1(F) \mid F \subseteq E_x \text{ for some } x \in X \text{ such that } \mu_\beta(F) \geqslant c \}. $$
	The proof of the present result is literally the same as the proof of Theorem \ref{prop:sameH1}, except that the step which shows that $S$ is a finite set has to be modified.
	
	For this purpose we make a few auxiliary definitions. Let $E'_x = \mathcal{G}^\vee \oplus I_{S_x}$ for each $x \in X$. We also define the set 
	$$ S' := \{ c_1(F') \mid F' \subseteq E'_x \text{ for some } x \in X \text{ such that } \mu_\beta(F') \geqslant c \}. $$
	We claim that $S \subseteq S'$. 
	
	Indeed, by \eqref{defwrong2}, every subsheaf $F \subseteq E_x$ is an extension of some subsheaf $F_2 \subseteq I_{S_x}$ by another subsheaf $F_1 \subseteq \mathcal{G}^\vee$. It is then clear that $F'= F_1 \oplus F_2$ is a subsheaf of $E'_x$, and that $c_1(F) = c_1(F')$. If $F$ destabilizes $E_x$, then $F'$ also destabilizes $E'_x$, which means that every element of $S$ is also in $S'$, as desired.
	
	It remains to show that $S'$ is finite. In fact, since $E'_x \subseteq (\mathcal{G}^\vee \oplus \OO_{X^{[k]}})$ for all $x \in X$, we obtain that $S'$ is a subset of
	$$ T' := \{ c_1(F') \mid F' \subseteq (\mathcal{G}^\vee \oplus \OO_{X^{[k]}}) \text{ such that } \mu_\beta(F') \geqslant c \}, $$
	which is finite by \cite[Theorem 2.29]{greb16}, hence $S'$ is also finite, which further implies the finiteness of $S$. This concludes the proof.
\end{proof}

Let $H$ be an ample class that satisfies Theorem \ref{prop:sameH2}, and $\mathcal{M}$ the moduli space of $\mu_H$-stable sheaves on $X^{[k]}$ with the same numerical invariants as $E_x$. Then the universal family $\mathcal{E}$ defines a classifying morphism
\begin{equation}\label{eqn:class2}
f \colon X \longrightarrow \mathcal{M}, \quad x\longmapsto [E_x].
\end{equation}

Similar as Theorem \ref{thm:component1}, we obtain

\begin{thm}\label{thm:component2}
	The classifying morphism \eqref{eqn:class2} defined by the family $\mathcal{E}$ identifies $X$ with a smooth connected component of $\mathcal{M}$.
\end{thm}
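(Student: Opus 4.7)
The plan is to follow the strategy of Theorem \ref{thm:component1} almost verbatim, replacing the input to the $\mathbb{P}^{k-1}$-functor $\mathrm{\Phi}$ by the object $F_x := \widehat{\mathrm{\Theta}}(\mathcal{O}_x)$ used in Lemma \ref{lem:Exextension}. By \cite[Lemma 1.6]{rz}, it suffices to verify two things: that $f$ is injective on closed points, and that $\dim T_{[E_x]}\mathcal{M} = 2$ for every $x \in X$. Since $E_x$ is $\mu_H$-stable by Theorem \ref{prop:sameH2}, we have the standard identification $T_{[E_x]}\mathcal{M} \cong \Ext^1_{\Xk}(E_x, E_x)$, so both points reduce to computations of $\Hom$ and $\Ext^1$ groups on $\Xk$.

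By construction, $E_x = \mathrm{\Phi}(F_x)$ with $F_x = \widehat{\mathrm{\Theta}}(\mathcal{O}_x)$. Since $\mathrm{\Phi}$ is a $\mathbb{P}^{k-1}$-functor with corresponding autoequivalence $H = [-2]$ (Example \ref{pn-func}), the graded isomorphism in Remark \ref{pn-func2} gives in degrees $0$ and $1$ the identifications
\begin{equation*}
\Hom_{\Xk}(\mathrm{\Phi}(A),\mathrm{\Phi}(B)) \cong \Hom_X(A,B), \qquad \Ext^1_{\Xk}(\mathrm{\Phi}(A),\mathrm{\Phi}(B)) \cong \Ext^1_X(A,B),
\end{equation*}
since $H^i(\mathbb{P}^{k-1},\CC)$ vanishes in odd degrees and the only non-negative pair $(i,j)$ with $i+2j \leqslant 1$ and $j \geqslant 1$ is ruled out by positivity of $i$.

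For injectivity of $f$, take $x \neq y$ in $X$. Applying the displayed identification and using that $\widehat{\mathrm{\Theta}}$ is an autoequivalence of $\De(X)$, we compute
\begin{equation*}
\Hom_{\Xk}(E_x, E_y) \cong \Hom_X(F_x, F_y) \cong \Hom_X(\OO_x, \OO_y) = 0,
\end{equation*}
so $E_x \not\cong E_y$ and $f$ is injective on closed points. For the tangent space computation, the same chain of isomorphisms (now with $x = y$) yields
\begin{equation*}
\Ext^1_{\Xk}(E_x, E_x) \cong \Ext^1_X(F_x, F_x) \cong \Ext^1_X(\OO_x, \OO_x) \cong T_x X,
\end{equation*}
which is $2$-dimensional. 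Combining this with \cite[Lemma 1.6]{rz} shows that $f$ embeds $X$ as a smooth connected component of $\mathcal{M}$. The only place where one has to be careful is to check that the degree-$0$ and degree-$1$ pieces of the $\mathbb{P}^{k-1}$-functor formula really reduce to the naive answer; I do not anticipate serious obstacles, since this is exactly the same step that makes the proof of Theorem \ref{thm:component1} go through, and the presence of the additional autoequivalence $\widehat{\mathrm{\Theta}}$ only simplifies matters by conjugating $\mathcal{O}_x$ into $F_x$.
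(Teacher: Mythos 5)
Your proposal is correct and follows essentially the same route as the paper: both reduce to injectivity on closed points plus $\dim\Ext^1_{\Xk}(E_x,E_x)=2$ via \cite[Lemma 1.6]{rz}, then compute these groups by combining the fact that $\widehat{\mathrm{\Theta}}$ is an equivalence (so $\Ext^\ast(F_x,F_y)\cong\Ext^\ast(\OO_x,\OO_y)$) with the $\mathbb{P}^{k-1}$-functor formula of Remark \ref{pn-func2}. Your explicit check that only the $H^0(\PP^{k-1})$ piece contributes in degrees $0$ and $1$ (using vanishing of negative-degree Exts for $\OO_x$) is exactly the bookkeeping the paper does by writing out $\Ext^\ast_X(\OO_x,\OO_y)\otimes H^\ast(\PP^{k-1},\CC)$.
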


\begin{proof}
	For any pair of points $x, y \in X$, since $\mathrm{\Theta}$ is an equivalence, we have
	$$ \Ext^\ast(F_x, F_y) \cong \Ext^\ast(\OO_x, \OO_y); $$
	moreover by Remark \ref{eqn:pnfunc} we have
	$$ \Ext^\ast(E_x, E_y) \cong \Ext^\ast(F_x, F_y) \otimes H^\ast(\PP^{k-1},\CC). $$
	It is clear that
	$$ \Ext_X^\ast(\OO_x, \OO_y) \cong \begin{cases} \mathrm{\Lambda}^\ast(T_{X,x}) \quad & \text{if } x=y \\ 0 \quad & \text{if } x \neq y. \end{cases} $$
	Combining the above computations we obtain
	\begin{alignat*}{3}
		&\Hom_{\X}(E_x, E_y) &&= 0 \qquad &&\text{for any } x, y \in X \text{ with } x \neq y \\
		\text{and} \quad &\Ext^1_{\X}(E_x, E_x) &&\cong T_{X,x} \quad\ &&\text{for any } x \in X.
	\end{alignat*}
	These imply that $f$ is injective on closed points and that $\dim(T_{[E_x]}\mathcal{M})=2$ for all $x\in X$. The claim then follows from an argument similar to the proof of Theorem \ref{thm:component1}.
\end{proof}

\begin{rem}
The stable vector bundles constructed in Theorem \ref{prop:sameH1} as well as Theorem \ref{prop:sameH2} are not tautological bundles as the rank of a tautological bundle is always divisible by $k$, but in our cases the ranks are $k+1$ and $2k-1$.
\end{rem}

\end{document}